\newcommand{\pic}[1]{
\begin{tikzpicture}
#1
\end{tikzpicture}}
\newcommand{\pr}{\mathbf{Pr}}
\newcommand{\E}[1]{{\mathbf E}\left[#1\right]}
\newcommand{\e}{{\mathbf E}}
\newcommand{\bag}{\begin{align}}
\newcommand{\bags}{\begin{align*}}
\newcommand{\eag}{\end{align*}}
\newcommand{\eags}{\end{align*}}
\newtheorem{thm}{Theorem}
\newtheorem{lem}[thm]{Lemma}
\newtheorem{cor}[thm]{Corollary}
\newcommand\cC{\mathcal C}
\newcommand\cD{\mathcal D}
\newcommand\cF{\mathcal F}
\newcommand{\brac}[1]{\left(#1\right)}
\newcommand{\bfrac}[2]{\brac{\frac{#1}{#2}}}
\newcommand{\bd}{\textbf d}
\newcommand{\G}{\Gamma}
\newcommand{\bI}{\mathbb{I}}
\newcommand{\f}{\phi}
\newcommand{\ca}{\alpha}
\author{Michael Anastos} 
\title{Finding perfect matchings in random regular graphs in linear time}
\begin{document}
\maketitle

\begin{abstract}
In a seminal paper on finding large matchings in sparse random graphs, Karp and Sipser \cite{KS} proposed two algorithms for this task. The second algorithm has been intensely studied, but due to technical difficulties, the first algorithm has received less attention. Empirical results in \cite{KS} suggest that the first algorithm is superior. In this paper we analyze an adapted version of the first algorithm, the \textsc{Reduce-Construct} algorithm.
The \textsc{Reduce-Construct} algorithm is proposed in \cite{AF} and it is shown that  it finds a maximum matching in random $k$-regular graphs in linear time in expectation for $k\in \{3,4\}$. 
We extend the analysis done in \cite{AF} to random $k=O(1)$-regular graphs. We show that \textsc{Reduce-Construct} finds a maximum matching in such a graph in linear time in expectation, as opposed to $O(n^{3/2})$ time for the worst-case. 
\end{abstract}
\section{Introduction}
Given a graph $G=(V,E)$, a matching $M$ of $G$ is a subset of edges such that no vertex is incident to two edges in $M$. Finding a maximum cardinality matching is a central problem in algorithmic graph theory. The most efficient algorithm for general graphs is that given by Micali and Vazirani  \cite{MV} and runs in $O(|E||V|^{1/2})$ time.

In this paper we analyze the \textsc{Reduce-Construct} algorithm for finding perfect matchings in random regular graphs.
The \textsc{Reduce-Construct} algorithm is proposed in \cite{AF}
and it is an adaptation of Algorithm 1, as it is stated in \cite{KS},
given by Karp and Sipser. Algorithm 1 was proposed for finding a large matching in the random graph $G_{n,m},m=cn/2$ for some positive constant $c>0$. 

The \textsc{Reduce-Construct} algorithm 
can be split into two algorithms, the \textsc{Reduce} algorithm and the 
\textsc{Construct} algorithm.
\textsc{Reduce} sequentially reduces the graph until it reaches the empty graph. Then  \textsc{Construct}  unwinds some of the actions that \textsc{Reduce} has taken and grows a matching which is then output. 
For the complete description of the algorithm see \cite{AF}. In \cite{AF}
Anastos and Frieze proved that  \textsc{Reduce-Construct}
finds a maximum matching in  random $k$-regular graphs in linear time in expectation, for $k\in \{3,4\}$. In this paper we extend this result 
to $(\ca,3,k)$-\emph{dominant} random graphs of minimum degree 3 and maximum $k$, $\ca=1.17$. For a graph $G$ let $n_i(G)$ be the number of vertices of degree $i$. Then we define the set of $(\ca,3,k)$-\emph{dominant} random graphs $\cC_{3,k}$ by
$$\cC_{3,k}:=\{G: \cD_{k,j}(G) \text{ holds  for all } 3<j\leq k\}$$
where 
$$\cD_{k,j}(G):=\{ n_j(G) \geq \ca n_{j-1}(G) - (\log^2 n -k)n^{0.8}/2^j\}.$$
We discuss the role of $\cC_{3,k}$  in Subsection 2.3.
We proceed by stating the main Theorem of this paper.
Given a degree sequence $\bd$,  we can  generate a random (multi-)graph $G = G([n],E)$ with  degree sequence $\bd$ using the configuration model of Bollob\'as \cite{Bol}.
\begin{thm}\label{main}
Let $3\leq k=O(1)$.
Let $G\in \cC_{3,k}$ be a random (multi)-graph with degree sequence $\bd$,   and no loops. 
Then with probability $1-o(n^{-0.5})$ \textsc{Reduce-Construct} finds a (near) perfect matching in $O(n)$ time.
\end{thm}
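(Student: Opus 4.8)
The plan is to analyze the \textsc{Reduce} and \textsc{Construct} phases separately, tracking the degree sequence of the residual (multi-)graph as a Markov chain on the counts $(n_1,n_2,\dots,n_k)$ together with the number of remaining edges. During \textsc{Reduce} one repeatedly applies the Karp--Sipser reduction steps: whenever there is a vertex of degree $1$, match it to its neighbour and delete both (the "forced" or "pendant" step); if no degree-$1$ vertex exists but there is a vertex of degree $2$, contract it (merge its two neighbours), which is the classical Karp--Sipser contraction that preserves the size of a maximum matching; and otherwise pick a random edge, add it to the matching, and delete its endpoints. The key structural point, inherited from \cite{AF}, is that on the configuration model the distribution of the residual multigraph conditioned on its degree sequence is again uniform, so the evolution of $(n_1,\dots,n_k)$ is Markovian and amenable to the differential-equation method. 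First I would set up these transition probabilities exactly, write down the associated system of ODEs for the scaled quantities $n_i/n$, and solve (or qualitatively analyze) them, paying attention to whether $n_1$ and $n_2$ stay $o(n)$ so that the "random edge" step is used only $O(n)$ times in total.

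The role of the membership condition $G\in\cC_{3,k}$, i.e.\ the inequalities $\cD_{k,j}$, is to guarantee that throughout \textsc{Reduce} the graph never develops a bad degree profile: the bound $n_j \ge \ca\, n_{j-1} - (\log^2 n - k)n^{0.8}/2^j$ should be shown to be an invariant (up to the stated slack) that is maintained by each reduction step with failure probability $o(n^{-0.5}/\mathrm{poly})$, so a union bound over the $O(n)$ steps and the $O(k)$ indices $j$ keeps the total failure probability $o(n^{-0.5})$. The point of $\ca=1.17$ is presumably that this constant is large enough to force the process, when it runs out of degree-$1$ and degree-$2$ vertices, to be dominated (in the sense of the matching number) by an object on which \textsc{Construct} provably recovers a perfect matching --- morally, the residual high-degree core behaves like a supercritical structure with a perfect (or near-perfect) matching whp. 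So the second main step is: show that when \textsc{Reduce} terminates, the accumulated matching plus the contractions can be "unwound" by \textsc{Construct} into a matching of $G$ that is perfect (or misses at most one vertex, hence "near perfect" for parity reasons), using the standard fact that Karp--Sipser contractions are matching-preserving and that each forced step is never a mistake.

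For the running time, I would argue that each of the $O(n)$ reduction steps can be implemented in amortized $O(1)$ time using the usual bucket data structure that keeps vertices grouped by current degree (this is where the configuration-model representation helps, since deleting a vertex only requires touching its $O(k)=O(1)$ incident configuration points), and that \textsc{Construct} likewise does $O(1)$ work per undone step; summing gives $O(n)$ total, matching the claimed bound. The comparison with the $O(n^{3/2})$ worst case is then just the observation that the algorithm, as a general matching algorithm, is correct always and fast on this random input.

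The hard part will be the differential-equation analysis of the \textsc{Reduce} phase together with proving that $\cD_{k,j}$ is a self-maintaining invariant --- in particular controlling what happens near the end of the process, when $n/n$-scaled quantities become small and the ODE approximation degrades, and ensuring that the "random edge" steps (which, unlike the forced and contraction steps, can genuinely lose a unit of matching) are rare enough and occur only in a regime where the $\ca n_{j-1}$ domination still forces a perfect matching on the remaining core. A secondary subtlety is handling multi-edges and making sure the "no loops" hypothesis is genuinely all that is needed, and that the parity/near-perfect caveat is correctly accounted for; I would expect the bulk of the technical work, and the place where the argument of \cite{AF} for $k\in\{3,4\}$ has to be genuinely extended rather than quoted, to be exactly this invariance-plus-endgame analysis for general constant $k$.
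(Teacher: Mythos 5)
There is a genuine gap, and it starts with the algorithm itself. In the $\delta\geq 3$ regime \textsc{Reduce} does not ``pick a random edge, add it to the matching, and delete its endpoints'': it performs a \emph{max-edge removal} (delete one random edge incident to a random vertex of \emph{maximum} degree), and no matching is built until \textsc{Construct} later unwinds the whole reduction. Consequently the correctness question is not whether a greedy step occasionally loses a unit of matching, but whether every Hyperaction performed is \emph{good}, i.e.\ one that \textsc{Construct} can invert while preserving maximality (Lemma 7 of \cite{AF}). The paper's entire analysis is organized around this: the excess $ex_\ell(G)=\sum_v [d(v)-\ell]\mathbb{I}(d(v)>\ell)$ is the control variable, Lemmas \ref{lem31} and \ref{hyper} show that small excess forces good Hyperactions, and Lemma \ref{general} and Corollary \ref{stopF} show that good Hyperactions keep the excess small. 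Your proposal never identifies this quantity or the good/bad dichotomy, so it contains no mechanism for proving that \textsc{Construct} actually outputs a maximum matching; the assertion that contractions and forced steps are ``never a mistake'' is precisely what fails for bad Hyperactions (e.g.\ Type 3c), and ruling those out is the heart of the proof.

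Your reading of the hypotheses is also off in ways that matter. The role of $\cC_{3,k}$ and $\ca=1.17$ is not to make the residual high-degree core ``supercritical'': it is to bound $p_{3,i}\leq 0.081$ (Lemma \ref{remA}), the probability that a max-edge removal hits a degree-$3$ vertex and triggers a contraction that can \emph{increase} the excess; this bound is exactly what makes the drift of $ex_{k,i}$ negative. Moreover the global structure is an induction on the maximum degree: each phase runs until the maximum degree drops from $k$ to $k-1$ (stopping time $\tau_{k-1}$) while membership in $\cC_{3,k-1}$ is maintained and a constant fraction of edges survives (Lemmas \ref{ind} and \ref{567}), and the endgame is never analyzed directly --- once maximum degree $4$ is reached one invokes Theorem 1 of \cite{AF} (Lemma \ref{34}). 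Your plan of writing ODEs for all $n_j/n$ and maintaining $\cD_{k,j}$ by a per-step union bound supplies neither this phase decomposition nor the hitting-time/drift argument (Lemmas \ref{halfb} and \ref{halfb2}) by which the dominance inequalities are actually shown to persist, and it runs head-on into the ``endgame degradation'' you yourself flag, which the paper avoids by stopping at $e_i\geq n^{0.9}$ and handing off to the $k\in\{3,4\}$ result of \cite{AF}.
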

A (near) perfect matching is one of size $\lfloor n/2 \rfloor$.
The probability in Theorem  \ref{main} is taken over both the randomness of the algorithm and the randomness  of the graph. Thus it also takes into account the probability that $G$ does not have a (near) perfect matching. In such an event inherently \textsc{Reduce-Construct} cannot find one. The following corollary is a direct consequence of Theorem \ref{main}.
\begin{cor}\label{mm}
Let $3\leq k=O(1)$. Let $G\in \cC_{3,k}$ be a random (multi)-graph with degree sequence $\bd$,   and no loops. Then there exists an algorithm that  finds a maximum matching of $G$ in $O(n)$ time in expectation.
\end{cor}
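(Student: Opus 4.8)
The plan is to invoke Theorem~\ref{main} as a black box and deal with its $o(n^{-0.5})$ failure probability by means of a worst-case fallback. The algorithm is: first run \textsc{Reduce-Construct} on $G$, aborting it if it ever exceeds its $O(n)$ time budget; if it terminates and outputs a matching of size $\lfloor n/2\rfloor$, return that matching; otherwise, run the Micali--Vazirani algorithm \cite{MV} on $G$ from scratch and return its output. Correctness is immediate: a matching of size $\lfloor n/2\rfloor$ is automatically a maximum matching (no matching of an $n$-vertex graph can be larger), so the first branch is correct, and the second branch is correct by \cite{MV}.

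For the running time, note first that since $k=O(1)$ the graph has $|E|\le kn/2=O(n)$ edges, so the Micali--Vazirani call costs $O(|E|\,|V|^{1/2})=O(n^{3/2})$ in the worst case, while the (aborted) \textsc{Reduce-Construct} call costs at most $O(n)$ always. Let $\cB$ be the event that \textsc{Reduce-Construct} fails to output a size-$\lfloor n/2\rfloor$ matching within its time budget. By Theorem~\ref{main}, $\pr(\cB)=o(n^{-0.5})$, where the probability (and hence the expectation below) is over both the randomness of $G$ and that of the algorithm. Since the total time is $O(n)$ on $\overline{\cB}$ and $O(n)+O(n^{3/2})$ on $\cB$, we get
$$\e{\text{(running time)}}\le O(n)+\pr(\cB)\cdot O(n^{3/2})=O(n)+o(n^{-0.5})\cdot O(n^{3/2})=O(n).$$

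There is no genuinely hard step here. The only subtlety worth spelling out is that the algorithm cannot tell a priori whether it has landed in the good event of Theorem~\ref{main}, so it needs a self-certifying success criterion before committing to the cheap branch; ``the returned matching has size $\lfloor n/2\rfloor$'' is exactly such a criterion, and it is precisely because the failure probability $o(n^{-0.5})$ beats the $O(n^{3/2})$ worst-case cost of \cite{MV} that the fallback contributes only $o(n)$ to the expectation. One should also remark that Theorem~\ref{main} already folds in the event that $G$ has no near-perfect matching, so no separate case analysis for that situation is needed: whenever $G$ genuinely lacks a near-perfect matching, we are simply in $\cB$ and the Micali--Vazirani call returns the true (smaller) maximum matching, again at negligible expected cost.
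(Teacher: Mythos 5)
Your proposal is correct and follows essentially the same route as the paper: run \textsc{Reduce-Construct}, and on failure fall back to Micali--Vazirani, with the $o(n^{-0.5})$ failure probability absorbing the $O(n^{3/2})$ worst-case cost in expectation. The extra details you supply (the explicit time-budget abort and the size-$\lfloor n/2\rfloor$ success check) are sensible clarifications of what the paper leaves implicit, not a different argument.
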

\begin{proof}
We first implement \textsc{Reduce-Construct} to find a perfect mathing of $G$ in $O(n)$ time. Theorem \ref{main} states that it fails to do so with probability $1-o(n^{-0.5})$. In such an event we impliment the Micali-Vazirani   algorithm \cite{MV} to find a maximum matching in
 $O(|E||V|^{1/2})=O(n^{1.5})$ time.
\end{proof}
A random $k$-regular (multi)-graph, $3\leq k=O(1)$, with no loops is simple 
with probability O(1). Furthermore it is $(\ca,3,k)$-dominant, $\ca=1.17$. Therefore both, Theorem \ref{main} and Corollary \ref{mm} hold in the special case where $G$ is a random $k$-regular graph.
\vspace{3mm}
\\For the rest of this paper we use the abbreviation w.h.p.\@ (with high probability) to mean with probability $1-o(n^{-0.5})$.
\section{Proof's Mechanics}
At various points we are going to use results proved in \cite{AF}. 
In this section we state those results. The reader is strongly advised to read Sections 1 \& 2 of \cite{AF} for better understanding.
We start by describing \textsc{Reduce}. 
We assume that our input (multi-)graph $G = G([n],E)$ has degree sequence $\bd$ and is generated by the configuration model of Bollob\'as \cite{Bol}. Let $W=[2\nu]$, $2\nu=\sum_{i=1}^n d(i)$, be our set of {\em configuration points} and let $\Phi$ be the set of {\em configurations} i.e. functions $\phi:W \mapsto [n]$ that such that $|\f^{-1}(i)|=d(i)$ for every $i \in [n]$. Given $\phi \in \Phi$ we define the graph $G_\phi=([n],E_\phi)$ where $E_\phi=\{\{\phi(2j-1),\phi(2j)\}: j\in [\nu] \}$. Choosing a function $\phi \in \Phi$ uniformly at random yields a random (multi-)graph $G_\phi$ with degree sequence $\bd$. 
\vspace{3mm}
\\\noindent\textbf{Algorithm} \textsc{Reduce}:
\vspace{3mm}
\\ The input is $G_0=G$.\\ 
$i=\hat{\tau}=0$.
\\ \textbf{While} $G_i=(V_i,E_i) \neq (\emptyset,\emptyset)$ do: 
\begin{itemize}
\item[]\textbf{If} $\delta(G_i)=0$: Perform a {\bf vertex-0 removal}: choose a random vertex of degree 0 and remove it from $V_i$.
\item[]\textbf{Else if} $\delta(G_i)=1$: Perform a {\bf vertex-1 removal}: choose a random vertex $v$ of degree 1 and remove it along with its neighbor $w$ and any edge incident to either of $v,w$. 
\item[]\textbf{Else if} $\delta(G_i)=2$: Perform a {\bf contraction}: choose a random vertex $v$ of degree 2. Then replace $\{v\} \cup N(v)$ ($v$ and its neighbors $N(v)$) by a single vertex $v_c$. For $u\in V \setminus (\{v\} \cup N(v))$, $u$ is joined to $v_c$ by as many edges as there are in $G_i$ from $u$ to $\{v\} \cup N(v)$. Here we call $v$ ``the contracted vertex''
\item[]\textbf{Else if } $\delta(G_i)\geq 3$: Perform a {\bf max-edge removal}: choose a random vertex of maximum degree and remove a random edge incident with it.
\\ \textbf{End if}
\item[]\textbf{If} the last action was a max-edge removal, say the removal of edge $\{u,v\}$ and in the current graph we have $d(u)=2$ and $u$ is joined to a single vertex $w$ by a pair of parallel edges then perform an {\bf auto correction contraction}: contract $u,v$ and $w$ into a single vertex.
\\ \textbf{End If}
\item[] Set $i=i+1$ and let $G_i$ be the current graph.
\end{itemize}
\textbf{End While}
\\Set $\hat{\tau}=i.$

Observe that we only reveal edges (pairs of the form $(\phi(2j-1),\phi(2j)): j\in [\nu]$) of $G_\phi$ as the need arises in the algorithm. Moreover the algorithm removes any edges that are revealed. Thus if  we let $\bd(i)$  be the degree sequence of $G_i$ then, given $\bd(i)$  we have that $G_i$ is uniformly distributed among all configurations with degree sequence $\bd(i)$ and no loops.

\subsection{Organizing the actions taken by {REDUCE}}
We do not analyze the effects of each action taken by \textsc{Reduce} individually. Instead we group together sequences of actions, into what we call {\em Hyperactions}, and we analyze the effects of the individual Hyperactions.

The first group of actions consists of all the vertex-0, vertex-1 removals and contractions performed before the first max-edge removal is performed. We let $\Gamma_0$ be the graph resulting from performing the first group of actions. Observe that $\Gamma_0$ is the first graph in the sequence $G_0,G_1,...,G_{\hat{\tau}}$ with minimum degree at least 3. Moreover since every graph that we study in this paper has minimum degree at least 3, in our case we have $G=G_0=\Gamma_0$.

Thereafter, every Hyperaction starts with a max-edge removal and it consists of all the actions taken until the next max-edge removal. We let $\Gamma_{i}$ be the graph that results from performing the first $i$ Hyperactions starting from $\G_0$. Thus $\Gamma_{i}$ is the $(i+1)^{th}$ graph in the sequence $G_0,G_1,...,G_{\hat{\tau}}$ that has minimum degree at least 3 and going from $\Gamma_{i-1}$ to $\Gamma_i$ \textsc{Reduce} performs a max-edge removal followed by a sequence of  vertex-0, vertex-1 removals, contractions and possibly of an auto correction contraction. 
We finally let $\Gamma_{\tau}$ be the final graph. Thus when $G=G_0$ has minimum degree 3 we have that $\Gamma_0,\Gamma_1,...,\Gamma_{\tau}$ is a subsequence of $G_0,G_1,...,G_{\hat{\tau}}$. Furthermore, $\Gamma_0=G_0$,  and $\Gamma_0,\Gamma_1,...,\Gamma_{\tau-1}$ consists of all the graphs in the sequence $G_0,G_1,...,G_{\hat{\tau}}$ with minimum degree at least 3.  
\vspace{3mm}
\\Our analysis mainly focuses on the following Hyperactions: we have put some diagrams of these Hyperactions in Appendix A
\vspace{3mm}
\\\noindent{\bf Hyperactions of Interest:}
\vspace{3mm}
\\\noindent\textbf{ Type 1}: A single max-edge removal,
\\\textbf{ Type 2}: A single max-edge removal followed by an auto correction contraction.
\\\textbf{ Type 3}: A single max-edge removal followed by a good contraction.
\\\textbf{ Type 4}: A single max-edge removal followed by 2 good contractions. 
\vspace{3mm}
\\We divide Hyperactions of Type 3 into three classes. Assume that during a Hyperaction of Type 3 the set $\{v,a,b\}$ is contracted, $v$ is the contracted vertex and $v_c$ is the new vertex. We say that such a Hyperaction is of \textbf{Type 3a} if $d(v_c)=d(a)+d(b)-2$, is of \textbf{Type 3b} if $d(v_c)=d(a)+d(b)-4$ and is of \textbf{Type 3c} if $d(v_c)<d(a)+d(b)-4$. Note that in general, $d(v_c)=d(a)+d(b)-2-2\eta_{a,b}$, where $\eta_{a,b}$ is the number of edges joining $a,b$.
\vspace{3mm}
\\With the exception of a Hyperaction of Type 3c, where $\eta_{a,b}\geq 2$, we refer to the Hyperactions of interest as good Hyperactions. We call any Hyperaction that is not good, including a Hyperaction of Type 3c, bad. 
\subsection{The excess}\label{list} 
For a graph $G$ and a positive  integer $\ell$ we let 
$$ ex_\ell(G):=\sum_{v \in V(G)} [d(v)-\ell]\mathbb{I}(d(v)>\ell).$$
Furthermore for $i\leq \tau$ we let $ex_{\ell,i}=ex_{\ell}(\G_i)$.
We use $ex_{\ell}$ to control the Hyperactions taken by Reduce.
Lemma \ref{hyper} implies that as long as $ex_k$ stays small, \textsc{Reduce} performs only good Hyperactions. Later on, at Lemma \ref{general} and Corollary \ref{stopF} we argue that as long as only good Hyperactions are performed $ex_k$ stays small.

\begin{lem}\label{lem31}(Lemma 3 of \cite{AF}) 
Let $i\geq 0$ and  assume  that $\G_i$ satisfies 
$ex_{\ell}(\G_i) \leq \log |V(\G_i)|$ for  some $3\leq \ell =O(1)$. 
Then  with  probability $1-o(|V(\G_i)|^{-1.9})$ the  Hyperaction  that \textsc{reduce} applies  to $\G_i$ is good.  In  addition,  a  Hyperaction  of  Type  2,3b or 4 is applied with  probability
$o(|V(\G_i)|^{- 0.9})$.
\end{lem}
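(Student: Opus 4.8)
The plan is to condition on the degree sequence $\bd(\G_i)$ and use the fact noted above that $\G_i$ is uniformly distributed among configurations with that degree sequence and no loops. A Hyperaction begins by picking a random vertex $u$ of maximum degree $\Delta=\Delta(\G_i)$ and removing a random edge $\{u,v\}$ incident with it. I would first set up the exposure: reveal the partner of one configuration point at $u$, obtaining $v$; this is a near-uniform random vertex biased by degree. After the max-edge removal, $u$ has degree $\Delta-1$ and $v$ has degree $d(v)-1$, and \textsc{Reduce} then processes any newly created vertices of degree $\le 2$. The key structural observation is that, because $ex_\ell(\G_i)\le\log|V(\G_i)|$ with $\ell=O(1)$, the number of vertices of degree $>\ell$ is $O(\log|V(\G_i)|)$, so with the stated probability the edge $\{u,v\}$ and the handful of subsequent edges revealed all land on vertices of degree exactly $3$ (the minimum), except possibly $u$ and $v$ themselves — this is what forces the Hyperaction into one of the "good" types.

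The main case analysis runs as follows. After removing $\{u,v\}$: (i) if $v$ still has degree $\ge 2$ and $u$ has degree $\ge 2$ and no degree-$\le 2$ vertex was created, the Hyperaction is Type 1 (a single max-edge removal) — this is the typical case. (ii) If $v$ had degree $3$, it now has degree $2$ and triggers a contraction of $\{v\}\cup N(v)$; I would show that w.h.p.\ (conditionally, with the error probabilities as stated) the two neighbors $a,b$ of $v$ in the current graph have degree $3$, are distinct, are joined by no edge, and are not equal to or adjacent to $u$ in a bad way, so $\eta_{a,b}=0$, the new vertex $v_c$ has degree $4$, and the Hyperaction is Type 3a — and crucially creates no further low-degree vertex, so it terminates. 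The auto-correction branch (Type 2) occurs exactly when $d(u)$ drops to $2$ with $u$ joined to a single vertex by parallel edges: I would bound the probability that the revealed partner of $u$'s second point coincides with $w$, which is $O(1/|V(\G_i)|)$ per the uniformity of the configuration, hence $o(|V(\G_i)|^{-0.9})$. Type 4 (two good contractions in a row), Type 3b ($\eta_{a,b}=1$), and all genuinely bad outcomes (creating a vertex of degree $\ge 4$ that is not $u$, producing $\eta_{a,b}\ge 2$, chaining three or more contractions, hitting a degree-$>3$ vertex among the $a,b,\dots$) each require at least one "coincidence" — a revealed configuration point landing on one of the $O(\log|V(\G_i)|)$ high-degree points or on one of a bounded number of already-touched points — and each such coincidence has probability $O(\log|V(\G_i)|/|V(\G_i)|)=o(|V(\G_i)|^{-0.9})$; a union bound over the bounded number of points revealed in a Hyperaction gives the two claimed bounds ($1-o(|V(\G_i)|^{-1.9})$ for goodness, since a bad outcome needs essentially two independent coincidences, and $o(|V(\G_i)|^{-0.9})$ for Type 2/3b/4, which need one).

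Since this is exactly Lemma 3 of \cite{AF}, I would structure the write-up as a reduction to that statement: check that the hypothesis $ex_\ell(\G_i)\le\log|V(\G_i)|$ with $3\le\ell=O(1)$ is literally the hypothesis used there, note that "no loops" and "uniform among such configurations" both hold for $\G_i$ by the exposure argument, and cite \cite{AF} for the detailed bookkeeping of which revealed-point coincidences push the Hyperaction out of Types 1--4. The one point that deserves care, and which I expect to be the main obstacle, is controlling the degree-biased sampling: the first partner revealed is not uniform on vertices but on configuration points, so a degree-$\Delta$ vertex is $\Delta$ times likelier to be hit, and since $\Delta$ can itself be as large as $k$ with $n_k$ possibly of order $n^{0.8}$ (only $ex_\ell$, not $n_k$, is forced small), one must verify that the probability of landing on a high-degree vertex at any of the bounded number of reveals is still $O(ex_\ell(\G_i)/|V(\G_i)|)$ rather than merely $O(\log|V(\G_i)|\cdot k/|V(\G_i)|)$ — both are $o(|V(\G_i)|^{-0.9})$, so the bound survives, but the constant-degree assumption $k=O(1)$ is what keeps everything polynomially small, and I would make sure that dependence is explicit.
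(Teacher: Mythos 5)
The paper does not actually prove this lemma: it is imported verbatim as Lemma~3 of \cite{AF}, so the ``proof'' here is a citation, and your plan of reducing to that statement (after checking that $\G_i$ is uniform over loopless configurations with its degree sequence, which the paper establishes via the edge-exposure argument) is exactly how the paper treats it. Your sketch of the underlying argument --- degree-biased exposure in the configuration model, at most $O(\ell\cdot ex_\ell)=O(\log|V(\G_i)|)$ configuration points on vertices of degree $>\ell$, one coincidence needed for Types 2/3b/4 giving $o(|V(\G_i)|^{-0.9})$ and two for a bad Hyperaction giving $o(|V(\G_i)|^{-1.9})$ --- is consistent with the case analysis the paper later performs in Lemmas \ref{multbounds1} and \ref{multbounds}, and I see no gap in it beyond the bookkeeping you explicitly defer to \cite{AF}.
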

Given Lemma \ref{lem31} we can now prove the following:
\begin{lem}\label{hyper}
For $\ell \in \mathbb{N}$ let $Q_\ell(G)$ be  the event that  \textsc{Reduce} applies  good Hyperactions  to every 
graph $\G'$ of the sequence  $\G_0,\G_1,...,\G_{\tau}$ that satisfies $ex_{\ell}(\G') \leq  \log^2 n$,  $\Delta(\G')>3$ and $|E(\G')|\geq n^{0.9} $. Then $$\pr(Q_\ell(G))=1-o(n^{-0.5}).$$  
Furthermore if $\G'$ is such a graph then \textsc{Reduce} applies to $\G'$ a bad Hyperaction   with probability $o(n^{-1.75})$ while it  
applies to $\G'$ a Hyperaction 
of Type 2, 3b or 4  with probability $o(n^{-0.75})$. 
\end{lem}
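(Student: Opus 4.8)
The plan is to prove Lemma~\ref{hyper} by a union bound over the sequence $\G_0,\G_1,\dots,\G_\tau$, applying Lemma~\ref{lem31} to each of its graphs that meets the three stated conditions. First I would bound the length of the sequence: every Hyperaction begins with a max-edge removal, each max-edge removal deletes one edge of $G$, and edges are only ever deleted (a contraction keeps the edges leaving the contracted set, destroys the rest, and creates none), so $\tau\le|E(G)|=\tfrac{1}{2}\sum_i d(i)\le\tfrac{1}{2}kn=O(n)$. Next I would use the conditioning noted in the excerpt: for each index $i$, given the degree sequence $\bd(i)$ of $\G_i$, the graph $\G_i$ is uniform among loopless configurations with that degree sequence --- exactly the setting of Lemma~\ref{lem31}. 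So it suffices to show that for every degree sequence $\bd'$ with minimum degree at least $3$, $\Delta(\bd')>3$, $|E|\ge n^{0.9}$ and $ex_\ell(\bd')\le\log^2 n$, conditioned on $\bd(i)=\bd'$ the Hyperaction applied to $\G_i$ is bad with probability $o(|V(\G_i)|^{-1.9})$ and of Type 2, 3b or 4 with probability $o(|V(\G_i)|^{-0.9})$; once a lower bound on $|V(\G_i)|$ in terms of $n$ is inserted, this is the ``furthermore'' part, and a union bound over the $O(n)$ indices gives the first assertion.

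The main obstacle is verifying the hypothesis of Lemma~\ref{lem31} for such a $\G'=\G_i$, namely that $ex_{\ell'}(\G')\le\log|V(\G')|$ for some constant $3\le\ell'=O(1)$. By itself the bound $ex_\ell(\G')\le\log^2 n$ is too weak: it only gives $\Delta(\G')\le\ell+\log^2 n$, so that $ex_{\ell'}(\G')=\sum_{m\ge\ell'}|\{v:d(v)>m\}|$ could be of order $\log^2 n\gg\log|V(\G')|$ for every constant $\ell'$. I would close this by a preliminary step showing that, outside an event of probability $o(n^{-0.5})$, the degree sequences of all graphs in $\G_0,\dots,\G_\tau$ are \emph{shallow}, with maximum degree bounded by an absolute constant $C=C(k)$ (equivalently $ex_C\equiv0$ along the sequence). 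Heuristically this should hold because a max-edge removal always strikes a vertex of current maximum degree, so \textsc{Reduce} continually erodes large degrees, while the only way a contraction can raise the maximum degree is to merge two already-high-degree vertices, of which there are too few --- at most $ex_3$, which stays small --- to arise as the two neighbours of a uniformly random degree-$2$ vertex in a graph on $\Omega(n^{0.9})$ vertices; this dovetails with Lemma~\ref{general} and Corollary~\ref{stopF}. On the shallowness event, taking $\ell'=C$ gives $ex_{\ell'}(\G')=0\le\log|V(\G')|$, so Lemma~\ref{lem31} applies with this $\ell'$; moreover, since $\G'$ has minimum degree at least $3$ and $\Delta(\G')\le C$, the identity $2|E(\G')|=3|V(\G')|+ex_3(\G')$ yields $|V(\G')|\ge 2|E(\G')|/C=\Omega(n^{0.9})$, hence $\log|V(\G')|=\Theta(\log n)$. (The condition $\Delta(\G')>3$ merely keeps us out of the cubic endgame, where \textsc{Reduce} behaves differently and a separate analysis is used.)

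Finally I would assemble the estimates. For each $\G'=\G_i$ satisfying the three conditions, on the shallowness event Lemma~\ref{lem31} together with $|V(\G')|=\Omega(n^{0.9})$ bounds the probability of a bad Hyperaction on $\G'$ by $o(|V(\G')|^{-1.9})=o(n^{-1.71})$ and the probability of a Type 2, 3b or 4 Hyperaction by $o(|V(\G')|^{-0.9})=o(n^{-0.81})$; since $o(n^{-0.81})\subseteq o(n^{-0.75})$ this gives the second per-graph bound, and $o(n^{-1.71})=o(n^{-3/2})$ is already more than enough for what follows (matching the asserted $o(n^{-1.75})$ exactly would only require a correspondingly sharper form of the input Lemma~\ref{lem31}), which establishes the ``furthermore'' part. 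For the first assertion, let $B_i$ be the event that $\G_i$ meets the three conditions but \textsc{Reduce} applies a bad Hyperaction to it; conditioning on $\bd(i)$ and summing over admissible degree sequences, $\pr(B_i)=o(n^{-3/2})$ uniformly in $i$, since $|V(\G_i)|=\Omega(n^{0.9})$ for every admissible $\bd(i)$. Intersecting with the shallowness event and union-bounding over the $O(n)$ indices gives $\pr\bigl(\bigcup_{i\le\tau}B_i\bigr)\le o(n^{-0.5})+O(n)\cdot o(n^{-3/2})=o(n^{-0.5})$, i.e.\ $\pr(Q_\ell(G))=1-o(n^{-0.5})$.
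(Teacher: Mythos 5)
Your overall skeleton --- condition on the degree sequence, apply Lemma \ref{lem31} to each $\G_i$ meeting the hypotheses, lower-bound $|V(\G_i)|$, and union-bound over the $O(n)$ steps --- is exactly the paper's approach. But the load-bearing step of your argument, the ``shallowness'' claim that with probability $1-o(n^{-0.5})$ every graph in the sequence has maximum degree at most an absolute constant $C(k)$ (equivalently $ex_C\equiv 0$ throughout), is a genuine gap. You offer only a heuristic for it, and that heuristic leans on Lemma \ref{general} and Corollary \ref{stopF}, which sit downstream of Lemma \ref{hyper} (Lemma \ref{general} explicitly conditions on $Q_k(G)$), so the argument as proposed is circular. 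Worse, the claim itself is dubious at the required confidence level: a Type 3a contraction merges two size-biased random neighbours, so over $\Theta(n)$ steps vertices of degree exceeding any fixed constant multiple of $k$ get created a polylogarithmic number of times in expectation; the process only ever guarantees $ex_k\le\log^2 n$, i.e.\ $\Delta\le k+\log^2 n$, and nothing in the paper establishes, or needs, a constant bound on $\Delta$.

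The paper avoids all of this with one deterministic observation you missed: the hypothesis $ex_\ell(\G')\le\log^2 n$ itself forces $2|E(\G')|\le \ell|V(\G')|+2\log^2 n$, hence $|V(\G')|\ge(2|E(\G')|-2\log^2 n)/\ell\ge |E(\G')|/\ell\ge n^{0.9}/\ell$. It then applies Lemma \ref{lem31} with the same $\ell$ and uses the strict monotonicity of $|E(\G_i)|$ to bound the failure probability by $\sum_{j\ge n^{0.9}}(j/\ell)^{-1.9}=O(n^{-0.81})=o(n^{-0.5})$ (your cruder $O(n)\cdot o(n^{-1.71})$ would also suffice). Two of your side observations are fair: the bound $ex_\ell\le\log^2 n$ does not literally match the $ex_\ell\le\log|V(\G_i)|$ required by Lemma \ref{lem31} as quoted --- the paper silently applies the lemma anyway, so the cited Lemma 3 of \cite{AF} must be read as robust to the polylog threshold --- and the stated exponent $o(n^{-1.75})$ does not follow from $|V(\G')|\ge n^{0.9}/\ell$, which yields only $o(n^{-1.71})$ (harmless downstream). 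But neither observation justifies replacing the paper's one-line bound on $|V(\G')|$ with an unproven structural claim about the entire process.
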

\begin{proof}
$ex_{\ell}(\G') \leq  \log^2 n$ implies that $|E(\G')| \leq \ell |V(\G')|/2+\log^2 n$. Thus for $r \geq {0.9}, |E(\G_i)| \geq n^{0.9}$
\begin{align}\label{eq4local}
|V(\G_i)|^{r} \geq  (|2E(\G_i)|-2\log^2 n)/\ell)^{r} \geq (|E(\G_i)|/\ell)^{r}.
\end{align}
 $|E(\G_i)|$ is decreasing with respect to $i$. Therefore the probability the event $Q_\ell(G)$ does not occur is bounded by
\begin{align*}
\sum_{i:|E(\G_i)|\geq n^{0.9}} |V(\G_i)|^{-1.9} 
&\geq \sum_{i:|E(\G_i)|\geq n^{1.9}} (|E(\G_i)|/\ell)^{-1.9}
\geq \sum_{ j \geq n^{0.9}} \ell^{-1.9} j^{-1.9} =o(n^{-0.5}). 
\end{align*}
The second part of Lemma \ref{hyper} follows directly from Lemma \ref{lem31}, the inequality $e_i\geq n^{0.9}$ and \eqref{eq4local}.
\end{proof}
\subsection{Proof of Theorem \ref{main}}
The proof of Theorem \ref{main} follows from Lemmas \ref{ind}, \ref{567} and \ref{34} given below. 
For $\ell\in \mathbb{N}$ define the stopping times  
$$\tau_\ell:=\min\{i: \G_i \text{ has maximum degree } \ell \text{ or }
|E(\G_i)| \leq n^{0.9}\}.$$
\begin{lem}\label{ind}
Let $8\leq k =O(1)$ and $\G_0=G\in \cC_{3,k}$ be a random (multi)-graph with degree sequence $\bd$, ,  maximum degree k, minimum degree 3 and no loops. Then w.h.p.\@
\begin{itemize}
\item[]i) the first $\tau_{k-1}-1$ Hyperactions applied by \textsc{Reduce} on $\G_0$ are good,
\vspace{-2mm}
\item[]ii) $\G_i\in \cC_{3, k-1}$ for $i\leq \tau_{k-1}$,
\vspace{-2mm}
\item[]iii) $|E(\G_{\tau_{k-1}})| \geq (1-4/k)|E(\G_0)|=\Omega(n)$.
\end{itemize}
\end{lem}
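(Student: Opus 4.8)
The plan is to run an induction on the number of Hyperactions applied by \textsc{Reduce}, tracking two invariants simultaneously: that every Hyperaction applied so far is good, and that the current graph $\G_i$ lies in $\cC_{3,k-1}$. The engine behind the argument is the feedback loop described in Subsection \ref{list}: Lemma \ref{hyper} says that while $ex_k(\G_i)\le\log^2 n$ (and $\Delta(\G_i)>3$, $|E(\G_i)|\ge n^{0.9}$) the next Hyperaction is good w.h.p., and conversely I will need to show that as long as only good Hyperactions are performed $ex_k$ stays small — this is presumably the content of the not-yet-stated Lemmas \ref{general}/\ref{34} and Corollary \ref{stopF}, which I am entitled to invoke. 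So the first step is to verify the base case: $\G_0=G\in\cC_{3,k}$ has minimum degree $3$, maximum degree $k$, and since $G$ is $(\ca,3,k)$-dominant the $\cD_{k,j}$ bounds give control on the upper tail of the degree sequence; in particular $ex_k(\G_0)$ is small (indeed $ex_k(\G_0)=0$ when $G$ is $k$-regular, and in general the $\cD_{k,j}$ inequalities force $n_k$ not to be too large relative to $n_{k-1},\dots$, so $ex_k(\G_0)\le \log^2 n$ comfortably). Note also $\tau_{k-1}\le\hat\tau$ and that for $i<\tau_{k-1}$ we have $\Delta(\G_i)=k>3$ and $|E(\G_i)|>n^{0.9}$, so the hypotheses of Lemma \ref{hyper} are exactly the ones that remain to be maintained.

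For the inductive step, assume $\G_0,\dots,\G_i$ have been produced by good Hyperactions, $ex_{k}(\G_j)\le\log^2 n$ for $j\le i$, and $\G_j\in\cC_{3,k-1}$ for $j\le i$, where $i<\tau_{k-1}-1$. Since $ex_k(\G_i)\le\log^2 n$, $\Delta(\G_i)>3$ and $|E(\G_i)|\ge n^{0.9}$, Lemma \ref{hyper} gives that Hyperaction $i+1$ is good except with probability $o(n^{-1.75})$, and is of Type $2$, $3b$ or $4$ only with probability $o(n^{-0.75})$; so on the good event the Hyperaction is Type $1$, $2$, $3a$, $3b$, $3c$(good), or $4$. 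I then check two things. First, that $ex_k$ does not jump: a good Hyperaction begins with a max-edge removal (which cannot create a vertex of degree $>k$ since we started at max degree $k$ and only remove edges, and the subsequent contractions merge low-degree vertices in a controlled way — a Type $3a$ contraction of $\{v,a,b\}$ with $d(v)=2$ produces $d(v_c)=d(a)+d(b)-2\le (k-1)+(k-1)-2$, which need not be $\le k$ for large $k$, so here I must be careful). This is where I expect the real work: I will need the drift/supermartingale estimate on $ex_k$ along good Hyperactions (the forthcoming Lemma \ref{general}) to conclude that $ex_k$ stays below $\log^2 n$ for all $i\le\tau_{k-1}$ w.h.p., rather than trying to argue it one step at a time.

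Second, I need to propagate membership in $\cC_{3,k-1}$, i.e.\ that $\cD_{k-1,j}(\G_{i+1})$ holds for all $3<j\le k-1$. Minimum degree $\ge 3$ is preserved because that is exactly the defining property of the $\G$-sequence (each $\G_i$ has minimum degree $\ge 3$). For the dominance inequalities $n_j(\G_{i+1})\ge\ca\, n_{j-1}(\G_{i+1})-(\log^2 n-k+1)n^{0.8}/2^j$, the slack term is designed to absorb a bounded number of bad perturbations: a single good Hyperaction changes each $n_j$ by $O(1)$ (a max-edge removal shifts $O(1)$ vertices down by one degree; the $O(1)$ contractions each move $O(1)$ vertices), and the number of Hyperactions up to $\tau_{k-1}$ is $O(n)$. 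So I would track the quantities $n_j(\G_i)-\ca\,n_{j-1}(\G_i)$ as (nearly) a random walk with $O(1)$ steps and show, via an Azuma/Chernoff argument on the Hyperaction-by-Hyperaction increments, that the deviation from its trajectory stays within the allotted $n^{0.8}$ budget w.h.p. Part iii) is then a bookkeeping corollary: each good Hyperaction removes at most a bounded number of edges (a Type $1$ removes $1$, Types $2$--$4$ remove $O(1)$ more via the contractions), but more to the point the total edge loss up to $\tau_{k-1}$ is controlled because $\tau_{k-1}$ is reached roughly when the degree-$k$ vertices are exhausted; quantitatively, by the time $\Delta$ drops to $k-1$ we have spent $\Theta(ex_k(\G_0)/(\text{per-Hyperaction drift}))$ Hyperactions, each costing $O(1)$ edges, and $ex_k(\G_0)=O(n)$ with the constant small enough (this is where the $4/k$ comes from) that $|E(\G_{\tau_{k-1}})|\ge(1-4/k)|E(\G_0)|$. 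The main obstacle, as flagged, is the uniform-in-$i$ control of $ex_k$ under good Hyperactions when $k$ is a large constant: a single Type $3a$ contraction can in principle raise $ex_k$, so the bound must come from a genuine supermartingale (negative-drift) argument over the whole run, not from per-step monotonicity, and it is precisely there that I would lean on Lemma \ref{general} and Corollary \ref{stopF}.
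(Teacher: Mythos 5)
Your high-level architecture matches the paper's: control $ex_k$ by a negative-drift (supermartingale) argument via Lemma \ref{general} and Corollary \ref{stopF} so that Lemma \ref{hyper} keeps the Hyperactions good, maintain membership in $\cC_{3,k-1}$ along the way, and finish part (iii) by bookkeeping on the edge count. But there is a genuine gap in the one place where the real work lives, namely propagating the dominance conditions. You propose to treat $X_{r,i}=n_{r,i}-\ca n_{r-1,i}$ as a random walk with $O(1)$ increments and to show by Azuma that its deviation ``stays within the allotted $n^{0.8}$ budget.'' That cannot work as stated: the number of Hyperactions up to $\tau_{k-1}$ is $\Theta(n)$, so even the \emph{mean} trajectory of $X_{r,i}$ can move by $\Theta(n)$ if its per-step drift is negative, and no concentration inequality will confine it to an $n^{0.8}$ window. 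The paper's actual mechanism (Lemma \ref{halfb}) is a restoring-drift barrier argument: it shows that \emph{whenever $X_{r,i}$ is within $O(n^{0.8}/2^r)$ of the lower barrier}, its expected one-step increment is at least $n^{-0.2}>0$, and then applies Azuma only over a window of length $n^{0.8}/(12\cdot 2^r)$ ending at the putative first crossing time. Establishing that positive drift is the substantive computation (\eqref{ev} combined with $p_{b_1,i}\geq \ca^{b_1-b_2}p_{b_2,i}-n^{-0.09}$, the bound $p_{3,i}\leq 0.081$ from Lemma \ref{remA}, and the specific choice $\ca=1.17$ so that $\ca-1-2\cdot 0.081>0$), and nothing in your proposal supplies it. Relatedly, your per-step induction ``assume $\G_j\in\cC_{3,k-1}$ for $j\le i$, deduce it for $i+1$'' is not the right frame: membership is not a high-probability event per step, and the paper instead works with stopping times ($t_{k-1}$, $\tau_{k-1}$) and a hitting-time argument (Lemma \ref{propb}/\ref{halfb}: at the first exit only $\cD_{k-1,k-1}$ can fail) to break the apparent circularity between ``$\G_i\in\cC_{3,k-1}$ implies $p_{3,i}\le 0.081$'' and ``$p_{3,i}\le 0.081$ implies the drifts keep $\G_i$ in $\cC_{3,k-1}$.''

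A second omission: the top level $r=k-1$ needs a separate treatment (Lemma \ref{halfb2}), because the drift of $X_{k-1,i}$ contains the term $\mathbb{I}(\Delta_i=k)$ coming from max-edge removals at degree-$k$ vertices, and positivity of the drift requires showing that $\Delta_i=k$ for all but a small fraction of the steps in the relevant window (Lemma \ref{new2}, which itself is an $ex_k$-budget argument). Your proposal does not distinguish this case, and without the indicator term the drift bound for $X_{k-1,i}$ would be $-2.1p_{k-2,i}-o(1)$, i.e.\ the wrong sign. Your sketch of part (iii) is in the right spirit but should be run with $ex_{k-1,0}=n_{k,0}$ (not $ex_k(\G_0)$, which is $0$ for a max-degree-$k$ graph): the paper gets $\tau_{k-1}\le 1.5n_{k,0}+n^{0.6}$ from the drift $\E{ex_{k-1,i+1}-ex_{k-1,i}\mid\G_i}\le -0.67$, then bounds the edge loss per step by $1.2$ in expectation to conclude $e_0-e_{\tau_{k-1}}\le 1.9n_{k,0}+n^{0.7}\le (4/k)e_0$.
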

\noindent Critical elements of the proof of Lemma \ref{ind} are Lemmas \ref{remA} and Lemma \ref{halfb}. The proof of Lemma \ref{propb} is given in section \ref{sind} while the proof of Lemma \ref{remA} is given in Appendix B. 
\begin{lem}\label{remA} Let $p_{3,i}=3n_3/2e_i$ and assume that $e_i\geq n^{0.9}$. 
Then $\G_i \in \cC_{3,k-1}$ implies that 
$$p_{3,i} \leq  \frac{3}{\sum_{j=3}^{k-1} \ca^{j-3}j}+o(1).$$
Hence $p_{3,i}\leq 0.081$  for $k\geq 8$.
\end{lem}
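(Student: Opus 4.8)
The plan is to turn the chain of domination inequalities defining membership in $\cC_{3,k-1}$ into a lower bound on $e_i$ that is linear in $n_3:=n_3(\G_i)$, and then simply divide. Write $n_j:=n_j(\G_i)$ and $c_j:=(\log^2 n-(k-1))n^{0.8}/2^j\le (\log^2 n)n^{0.8}/2^j$, so that $\G_i\in\cC_{3,k-1}$ supplies $n_j\ge \ca n_{j-1}-c_j$ for every $4\le j\le k-1$. Note that we will need neither the minimum‑degree nor the maximum‑degree of $\G_i$: the argument only uses these $\cD_{k-1,j}$'s together with $e_i\ge n^{0.9}$.

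First I would iterate these inequalities. A one-line induction on $j$ gives $n_j\ge \ca^{j-3}n_3-\sum_{m=4}^{j}\ca^{j-m}c_m$ for all $3\le j\le k-1$. Since $\ca=1.17$ satisfies $2\ca>1$ and $k=O(1)$, the accumulated error is dominated by a convergent geometric series: $\sum_{m=4}^{j}\ca^{j-m}c_m\le (\log^2 n)n^{0.8}\,\ca^{k-1}\sum_{m\ge 4}(2\ca)^{-m}=O\bigl((\log^2 n)n^{0.8}\bigr)$. Denote this bound $E_n$, so that $n_j\ge \ca^{j-3}n_3-E_n$ for every $3\le j\le k-1$.

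Next I would lower bound $e_i$. Dropping all other (non-negative) degree classes, $2e_i=\sum_{j}jn_j\ge\sum_{j=3}^{k-1}jn_j$, so the previous step yields
$$
2e_i\ \ge\ \sum_{j=3}^{k-1}j\bigl(\ca^{j-3}n_3-E_n\bigr)\ =\ n_3 S_k - E_n\sum_{j=3}^{k-1}j,
\qquad\text{where}\quad S_k:=\sum_{j=3}^{k-1}\ca^{j-3}j ,
$$
and since $k=O(1)$ the subtracted term is again $O\bigl((\log^2 n)n^{0.8}\bigr)=:E_n'$. To finish I would split on the size of $n_3$. If $n_3\ge n^{0.85}$, then $E_n'/n_3=O((\log^2 n)n^{-0.05})=o(1)$, so $2e_i\ge n_3(S_k-o(1))$ and hence $p_{3,i}=3n_3/2e_i\le 3/(S_k-o(1))=3/S_k+o(1)$ (using that $S_k$ is bounded below by a positive constant). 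If instead $n_3<n^{0.85}$, then using $e_i\ge n^{0.9}$ directly, $p_{3,i}\le 3n^{0.85}/(2n^{0.9})=o(1)\le 3/S_k+o(1)$. Either way $p_{3,i}\le 3/S_k+o(1)$, which is the displayed bound. For the numerical consequence, $S_k$ is increasing in $k$ (each step adds the positive term $k\ca^{k-3}$), so for $k\ge 8$ we have $S_k\ge S_8=3+4\ca+5\ca^2+6\ca^3+7\ca^4\approx 37.25$, giving $p_{3,i}\le 3/37.25+o(1)<0.0806+o(1)\le 0.081$ once $n$ is large enough.

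The computation is routine; the one point that needs care is that the errors coming from $\cD_{k-1,j}$ carry a $\log^2 n$ factor, so one cannot simply absorb $E_n'$ into $o(1)$ after dividing by $n_3$ — the case distinction on whether $n_3\ge n^{0.85}$ (equivalently, the observation that when $n_3$ is small $p_{3,i}$ is automatically tiny because $e_i\ge n^{0.9}$) is exactly what makes the error negligible. A minor secondary point is verifying that the geometric series $\sum_m(2\ca)^{-m}$ used to collapse the accumulated errors converges, which is where $\ca>\tfrac12$ enters.
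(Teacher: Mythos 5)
Your proposal is correct and follows essentially the same route as the paper's proof in Appendix B: iterate the domination inequalities to get $n_{j,i}\geq \ca^{j-3}n_{3,i}-O(n^{0.8}\log^2 n)$, lower bound $2e_i$ by $\sum_{j=3}^{k-1}jn_{j,i}$, and divide. Your treatment is in fact slightly more careful than the paper's (the explicit case split on $n_3$ versus $n^{0.85}$ is exactly how the paper's $o(1)$ error terms should be justified, and your displayed denominator $\sum_{j=3}^{k-1}\ca^{j-3}j$ matches the lemma statement, whereas the appendix contains a sign typo $\ca^{3-j}$).
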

\begin{lem}\label{propb}
Let $8\leq k=O(1)$. 
Let $\G_i$ be  the  first  graph  in  the  sequence
$\G_0,\G_1,...,\G_{\tau}$ that  does  not  belong  to $\cC_{3,k-1} \supseteq \cC_{3,k}$.
Then  w.h.p. $\G_i$ violates  only  $D_{k-1,k-1}(G_i)$  out of the events $D_{k-1,k-1}(G_i),D_{k-1,k-2}(G_i)...,D_{k-1,4}(G_i)$  and it belongs  to
$\cC_{3,k -2} \supseteq \cC_{3,k-1}$.
\end{lem}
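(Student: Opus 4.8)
The plan is to track how the quantities $n_j(\G_t)$ and the excess $ex_{k-1,t}$ evolve along the sequence $\G_0,\G_1,\dots$ up to the first time $i$ that $\G_i \notin \cC_{3,k-1}$, and to show that the only way to leave $\cC_{3,k-1}$ is to narrowly violate the single inequality $\cD_{k-1,k-1}$, while all the lower-index inequalities $\cD_{k-1,j}$, $j<k-1$, remain comfortably satisfied with slack. Since every $\G_t$ for $t<i$ lies in $\cC_{3,k-1}$, Lemma \ref{hyper} applies: w.h.p.\ each Hyperaction among the first $i$ is good (Type 1, 2, 3a, 3b, or 4), and Types 2, 3b, 4 each occur with probability $o(n^{-0.75})$ per step, so over the $O(n)$ relevant steps w.h.p.\ only $o(n^{0.25})$ of them are not of Type 1 or 3a. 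So I would first isolate the ``generic'' Hyperactions — a max-edge removal at a maximum-degree vertex, possibly followed by a single good contraction of Type 3a — and compute the exact effect of such a Hyperaction on the vector $(n_3,n_4,\dots,n_{k-1},n_k)$ and on $ex_{k-1}$. A Type-1 action at a degree-$k$ vertex decreases $n_k$ by $1$, increases $n_{k-1}$ by $1$ (modulo the degree change at the other endpoint), and possibly triggers vertex-1 removals/contractions; a Type-3a action merges two vertices $a,b$ into one of degree $d(a)+d(b)-2$. Crucially, because $\G_0$ has maximum degree exactly $k$ and the only source of degree-$k$ vertices being destroyed is the max-edge removal (which always hits a current-maximum-degree vertex), once $n_k(\G_t)$ hits $0$ we have $\Delta(\G_t)\le k-1$ and $t=\tau_{k-1}$; before that, the max-degree vertex has degree $k$, so each Hyperaction chips away at $n_k$ (and nothing creates a degree-$k$ vertex from a Type-3a merge since $d(a)+d(b)-2$ could be as large as $2(k-1)-2 = 2k-4 \ge k$ — so actually I must be careful: a contraction of two high-degree vertices can create a vertex of degree $\ge k$, which would feed back into $n_k$ or beyond).

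That last point is the heart of the matter and I expect it to be the main obstacle: controlling the creation of high-degree vertices by good contractions of Type 3a (and the rarer Types 3b/4). The key leverage is Lemma \ref{remA}: while $\G_t \in \cC_{3,k-1}$ we have $p_{3,t} = 3n_3/2e_t \le 0.081$, i.e.\ degree-$3$ vertices are scarce, so the contracted vertex $v$ (which has degree $2$ at contraction time but whose neighbors $a,b$ had degree $\ge 3$) picks neighbors $a,b$ that are biased toward low degree only mildly — but more to the point, the number of Type-3a contractions involving a vertex of degree $\ge$ some threshold is controlled in expectation by $p_{3,t}$ being bounded away from, and the total excess $ex_{k-1}$ changing by a bounded amount per Hyperaction. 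I would set up a supermartingale (or a direct union bound over the $O(n)$ steps using Azuma/Freedman on the bounded-increment martingale for $ex_{k-1,t}$) to show $ex_{k-1,t} \le \log^2 n$ throughout $t< i$, which by the definition of $ex_{k-1}$ forces $\sum_{j>k-1}(j-(k-1))n_j \le \log^2 n$, hence $n_k(\G_t) \le \log^2 n$ and $n_j(\G_t)=0$ for $j>k$ except a negligible residue. This is exactly the regime in which $\G_i$ can only fail $\cC_{3,k-1}$ through $\cD_{k-1,k-1}$: the inequalities $\cD_{k-1,j}$ for $3<j<k-1$ involve $n_j \ge \ca n_{j-1} - (\log^2 n - (k-1))n^{0.8}/2^j$, and since $\G_{i-1}\in\cC_{3,k-1}$ satisfied all of them and a single Hyperaction changes each $n_j$ by $O(1)$ (plus at most $O(\log^2 n)$ total over the rare Hyperactions, which I would absorb into the $n^{0.8}$ slack term), the only coordinate that can drop below its threshold at step $i$ is $n_{k-1}$ relative to $n_{k}$ — i.e.\ $\cD_{k-1,k-1}$.

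Finally, to conclude $\G_i \in \cC_{3,k-2}$, I would observe that $\cD_{k-2,j}(\G_i)$ for $3<j\le k-2$ is implied by the facts just established: for $j<k-1$ it is essentially $\cD_{k-1,j}(\G_i)$ (the defining inequality is the same up to the harmless replacement of the constant $k$ by $k-1$ in the slack term, which only makes the bound easier), and since we showed those hold with slack, they transfer; the index $j=k-2$ is covered for the same reason. Thus the only inequality in the definition of $\cC_{3,k-1}$ that $\G_i$ violates is $\cD_{k-1,k-1}$, and $\G_i$ still satisfies every inequality defining $\cC_{3,k-2}$, giving $\G_i \in \cC_{3,k-2}$. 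The write-up will need to (i) record the exact per-Hyperaction changes to $(n_j)_j$ and to $ex_{k-1}$ in a short lemma or table, (ii) run the concentration argument for $ex_{k-1,t}\le\log^2 n$ over $t<i$, citing Lemma \ref{hyper} to discard bad Hyperactions and Lemma \ref{remA} to bound the drift, and (iii) do the bookkeeping that translates ``$ex_{k-1}$ small and $\G_{i-1}\in\cC_{3,k-1}$'' into ``$\G_i$ violates only $\cD_{k-1,k-1}$ and lies in $\cC_{3,k-2}$''; step (ii) is where essentially all the work lies.
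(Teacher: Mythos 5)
There is a genuine gap at the heart of your argument: the step where you conclude that ``the only coordinate that can drop below its threshold at step $i$ is'' the one in $\cD_{k-1,k-1}$. Your justification --- $\G_{i-1}\in\cC_{3,k-1}$ plus the fact that a single Hyperaction changes each $n_j$ by $O(1)$ --- cannot distinguish the index $j=k-1$ from the indices $4\le j\le k-2$: every inequality $\cD_{k-1,j}$ has the same form and the same bounded-increment property, so if bounded increments sufficed to protect $\cD_{k-1,j}$ for $j\le k-2$ they would equally protect $\cD_{k-1,k-1}$, and the lemma would be vacuous. Nothing you say rules out $X_{j,i}:=n_{j,i}-\ca n_{j-1,i}$ for some $j\le k-2$ drifting down over many steps and being the first quantity to cross its threshold. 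The paper's proof (Lemma \ref{halfb}) closes exactly this hole with a drift argument: using the one-step expectation \eqref{ev}, the inequality $\cD_{k-1,j+1,i}$ to lower-bound $n_{j+1,i}$ by $\ca n_{j,i}$ minus slack, the near-tightness of $\cD_{k-1,j,i}$ to relate $n_{j-1,i}$ to $n_{j,i}$, and Lemma \ref{remA}'s bound $p_{3,i}\le 0.081$, one gets $\E{X_{j,i+1}-X_{j,i}\mid\G_i}\gtrsim(\ca-1-2\cdot 0.081)\,n_{j,i}/2e_i\ge n^{-0.2}>0$ whenever $X_{j,i}$ is within $O(n^{0.8}/2^{j})$ of its threshold; Azuma over a window of length $\Theta(n^{0.8})$ then shows the threshold is never crossed w.h.p. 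The choice $\ca=1.17$ is precisely what makes $\ca-1-2\cdot 0.081>0$, and the asymmetry you need is that this argument is unavailable for $j=k-1$ because it would require a lower bound on $n_{k,i}$ in terms of $n_{k-1,i}$ (a ``$\cD_{k-1,k}$''), which is not part of $\cC_{3,k-1}$ and is false since $n_{k,i}\to 0$. None of this appears in your proposal; you instead locate ``essentially all the work'' in controlling the excess, which in the paper is already dispatched by Corollary \ref{stopF} and is not the content of this lemma.

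A second, independent error: you claim $ex_{k-1,t}\le\log^2 n$ throughout, hence $n_k(\G_t)\le\log^2 n$. This is false from the start, since $ex_{k-1,0}=n_{k,0}=\Theta(n)$; the quantity kept at $O(\log^2 n)$ is $ex_{k,t}$, which controls $n_j$ only for $j>k$ and says nothing about $n_k$ (indeed $n_{k,i}$ decreases roughly linearly and only reaches $0$ at $\tau_{k-1}\approx 1.5\,n_{k,0}$). Relatedly, $\cD_{k-1,k-1}$ compares $n_{k-1}$ with $n_{k-2}$, not with $n_k$ as you write. Your final step (transferring the surviving inequalities from $\cC_{3,k-1}$ to $\cC_{3,k-2}$ via the larger slack term) is fine, but it rests on the unproven main claim.
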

\noindent For $\ell \in \mathbb{N}$ let
$$t_\ell:=\min\{i: \G_i \notin \cC_{3,\ell} \text{ or } |E(\G_i)|< n^{0.9}\}.$$
We will show that most of the time only Hyperactions of Type 1 or 3a occur. A Hyperaction of Type 1 decrease $ex_{k,i}$ by at least 1 while a Hyperaction of Type 3a increase $ex_{k,i}$ by at most $k-2$ (see \eqref{bddex}, Lemma \ref{multbounds}). The later occus with probability at most $p_{3,i}$. Hence we can use Lemma \ref{remA} to bound $p_{3,i}$ and show that if $\G_i \in \cC_{k-1,3}$ and $ex_{k,i}>0$ then $ex_{k,i}$ will decrease in expectation. It will follow (done in Lemma \ref{general} and Corollary \ref{stopF}) that $ex_{k,i} \leq \log^2 n$ for $i<t_{k-1}$.
 Thus Lemma \ref{hyper} implies that for $i < t_{k-1}$ the $i^{th}$ Hyperaction is good.
   
Thereafter using Lemma \ref{propb} we show that $\tau_{k-1} \leq t_{k-1}$.  Lemma \ref{propb} implies that w.h.p.\@ the hitting time of $\neg \cD_{k-1,k-1}(\G_i)$ and $t_{k-1}$ are equal. The first one has simpler description thus it is easier to monitor.
\vspace{3mm}
\\Unfortunately not all the calculation done in the proof of Lemma \ref{ind} are valid for smaller values of $k$. The following Lemma aims to treat those cases.
\begin{lem}\label{567}
Let  $k\in\{5,6,7\}$ and $\G_0=G$ be a random (multi)-graph with degree sequence $\bd$, maximum degree $k$, minimum degree 3 and no loops.  Then w.h.p.\@
\begin{itemize}
\item[]i) the first $\tau_{k-1}-1$ Hyperactions applied by \textsc{Reduce} on $\G_0$ are good,
\vspace{-2mm}
\item[]ii) $|E(\G_{\tau_{k-1}})|\geq |E(\G_0)|/10^{25}=\Omega(n)$.
\end{itemize}
\end{lem}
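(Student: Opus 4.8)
The plan is to re-run the argument of Lemma~\ref{ind}, isolating exactly where the hypothesis $k\geq 8$ enters and replacing those steps with coarser estimates adapted to $k\in\{5,6,7\}$. Two places need attention. First, in the drift calculation one bounds the effect of a Type~3a Hyperaction on $ex_{k,i}$ by $k-2$ and its probability by $p_{3,i}$, then invokes Lemma~\ref{remA}; this gives negative drift only once $p_{3,i}(k-2)<1$, which, with the bound $p_{3,i}\leq 3/\sum_{j=3}^{k-1}\ca^{j-3}j$, holds for $k\in\{6,7\}$ but \emph{fails} for $k=5$. Second, Lemma~\ref{propb} --- which identifies $t_{k-1}$ with the hitting time of $\neg\cD_{k-1,k-1}$ and hence yields $\tau_{k-1}\leq t_{k-1}$ --- is stated only for $k\geq 8$. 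Since for $k\in\{5,6,7\}$ the class $\cC_{3,k-1}$ is cut out by only $k-4\in\{1,2,3\}$ inequalities, both estimates can be redone directly.

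I would proceed as follows. (1)~Show that, as long as $\G_i\in\cC_{3,k-1}$, $\Delta(\G_i)\geq k$ and $e_i\geq n^{0.9}$, the potential $ex_{k-1,i}$ has expected one-step decrease at least an absolute constant $c_k>0$: a Type~1 Hyperaction decreases it by at least $1$, a Type~3a Hyperaction increases it by at most $k-3$ and occurs with probability at most $p_{3,i}$, the other good Hyperactions change it by $O(1)$ with non-positive expected effect, and by Lemma~\ref{remA} one has $p_{3,i}(k-3)<1$ for each of $k=5,6,7$. (2)~Show that w.h.p.\@ $ex_{k,i}\leq\log^2 n$ for every $i<t_{k-1}$, by the stopping-time / supermartingale arguments in the spirit of Lemma~\ref{general} and Corollary~\ref{stopF} applied to the drift of $ex_{k,i}$ --- negative for $k\in\{6,7\}$ by the crude estimate, and for the borderline case $k=5$ obtained by replacing the worst-case increment $k-2$ by its conditional expectation given that a contraction occurs, a short computation with the defining inequality $\cD_{4,4}$ of $\cC_{3,4}$ showing that the product of this expectation with $p_{3,i}$ still stays below $1$. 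Lemma~\ref{hyper} then gives that the first $t_{k-1}-1$ Hyperactions are good. (3)~By the reasoning of Lemma~\ref{ind}, using the $k\in\{5,6,7\}$ analogue of Lemma~\ref{propb} (a finite check, since $\cC_{3,k-1}$ has at most three defining inequalities), deduce $\tau_{k-1}\leq t_{k-1}$; together with (2) this proves i). (4)~For ii), first note that the supermartingale $ex_{k-1,i}+c_k\,i$ forces $ex_{k-1,i}$ to hit $0$ within $O(n)$ Hyperactions, so $\tau_{k-1}$ is triggered by the maximum degree dropping to $k-1$ and not by $e_i$ crossing $n^{0.9}$; then bound the edges deleted before $\tau_{k-1}$ by tracking the potential $e_i-\beta_k\,ex_{k-1,i}$ for a suitable $\beta_k>0$, showing it is a submartingale up to $o(n)$ error from rare Hyperactions, and concluding $e_{\tau_{k-1}}\geq e_0-\beta_k\,n_k(\G_0)-o(n)$.

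The hard part will be Step~(4). The naive estimate ``edges deleted $=O(n_k(\G_0))=O(n)$'' is useless when $n_k(\G_0)$ is a constant fraction of $n$, because the hidden constant can exceed $e_0/n$. The saving feature is that $\cC_{3,k-1}$ couples $p_{3,i}$, $n_k$ and $e_i$: when $n_k(\G_0)$ is large, $e_0$ is correspondingly large and $p_{3,i}$ correspondingly small, so the drift of $ex_{k-1,i}$ is far more negative than the uniform bound $-c_k$ and almost every Hyperaction is a single max-edge removal deleting one edge. Choosing $\beta_k$ in an instance-dependent (or piecewise, over a bounded number of ranges of $n_k(\G_0)/e_0$) way keeps $\beta_k\,n_k(\G_0)$ a strict fraction of $e_0$ in every regime, which is exactly what gives $|E(\G_{\tau_{k-1}})|\geq|E(\G_0)|/10^{25}$. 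Carrying out this book-keeping separately for $k=5,6,7$, together with the Azuma-type concentration needed to upgrade the expected-drift inequalities to w.h.p.\@ statements, is the bulk of the remaining work.
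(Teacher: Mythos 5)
There is a genuine gap, and it starts with the hypotheses. Lemma \ref{567} does \emph{not} assume $G\in\cC_{3,k}$, so Lemma \ref{remA} and any analogue of Lemma \ref{propb} are simply unavailable to you: you have no a priori bound on $p_{3,i}$ beyond $p_{3,i}\in[0,1]$. Moreover, even if one added the dominance hypothesis, it could not be propagated for $k\in\{5,6,7\}$: the martingale computation that preserves $\cD_{k-1,r}$ (see \eqref{113}) needs $\ca-1-2p_{3,i}>0$, i.e.\ $p_{3,i}\leq 0.081$, whereas Lemma \ref{remA} only gives $p_{3,i}\lesssim 0.39$ when $k=5$. This is exactly why the paper forks into a separate lemma here. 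The paper's proof of part (i) needs none of this: the refined drift bound \eqref{elldrift} (which already carries the terms $-p_{3,i}^3$ and the factor $(1-p_{3,i})^2$, i.e.\ your ``conditional expectation of the increment'' idea, but derived without any dominance assumption) is negative \emph{uniformly over} $p_{3,i}\in[0,1]$ for $k\leq 7$, with maximum $-0.0879$ at $k=7$, $p_{3,i}=0.40457$. That is Corollary \ref{stopF}(a), and it gives (i) directly via Lemma \ref{general} and Lemma \ref{hyper}, with no $t_{k-1}$, no $\cC_{3,k-1}$, and no analogue of Lemma \ref{propb}.

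The more serious gap is in your Step (4). A submartingale of the form $e_i-\beta_k\,ex_{k-1,i}$ requires $\beta_k\geq (1+2p_{3,i})/c_k$ at \emph{every} step $i\leq\tau_{k-1}$; with $c_k\approx 0.2$ this forces $\beta_k\approx 15$ whenever $p_{3,i}$ is moderate, and then $e_0-\beta_k n_{k,0}$ can be hugely negative (for $k=5$, $e_0\leq 2.5n$ while $n_{k,0}$ may be $\Omega(n)$). Your proposed rescue --- ``large $n_k(\G_0)$ implies small $p_{3,i}$'' --- only controls $p_{3,0}$; degree-$3$ vertices are created throughout the process, and without a dominance class there is no mechanism to keep $p_{3,i}$ small at later times, so no single (even instance-dependent or piecewise) $\beta_k$ works. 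The paper's missing idea is multiplicative rather than additive: it tracks the \emph{ratio} $ex_{k-1,i}/e_i$ via the potential $X_i=(ex_{k-1,i+1}-ex_{k-1,i})-2.4\,\frac{ex_{k-1,i}}{e_i}(e_{i+1}-e_i)$, shows $\E{X_i\mid\G_i}<0$ (Lemma \ref{-drift}, again a uniform numerical check over $p_{3,i},p_{k,i}$), and deduces (Claim 1 of Lemma \ref{part1}) that each time $ex_{k-1}$ drops by a constant factor the edge count drops by at most a factor $10^3$ while the ratio halves. Eight such phases bring the ratio below $10^{-2}$ at a time $t^*$ with $e_{t^*}\geq e_0/10^{24}$, and then the constant negative drift \eqref{-0.2} kills $ex_{k-1}$ within $6e_{t^*}/10^2$ further steps (Lemma \ref{part2}); this is where the constant $10^{25}$ comes from. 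Your outline does not contain this ratio/phase mechanism, and without it part (ii) does not go through.
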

\begin{lem}\label{34}
Let  $k=O(1)$ and $\G_0=G$ be a random (multi)-graph with degree sequence $\bd$, maximum degree $k$, minimum degree 3, no loops and even number of vertices.  If w.h.p.\@ there exist $i$ such that
\begin{itemize}
\item[]i) the first $i$ Hyperactions performed by \textsc{Reduce} are good,
\vspace{-2mm}
\item[]ii) $\G_i$ has minimum degree 3 and maximum degree 4 
\vspace{-2mm}
\item[]iii) $|V(G_i)| = \Omega(n)$,
\end{itemize}
then, w.h.p.\@
 \textsc{Reduce-Construct} finds a perfect  matching of $G$ in $O(n)$ time. 
\end{lem}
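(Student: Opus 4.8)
The plan is to reduce the problem to the $k \in \{3,4\}$ case already handled in \cite{AF}, and then to account for the bookkeeping needed to run \textsc{Construct} back through the Hyperactions. By hypothesis, w.h.p.\ there is an index $i$ such that the first $i$ Hyperactions of \textsc{Reduce} are good, $\G_i$ has minimum degree $3$ and maximum degree $4$, and $|V(\G_i)| = \Omega(n)$. First I would observe that, given its degree sequence, $\G_i$ is distributed as a uniformly random configuration-model (multi)graph with minimum degree $3$ and maximum degree $4$ and no loops; this is exactly the class of graphs to which the analysis of \cite{AF} for $k \in \{3,4\}$ applies. Hence, conditioning on the (w.h.p.) event that such an $\G_i$ exists, the results of \cite{AF} give that, w.h.p., \textsc{Reduce} applied to $\G_i$ performs only good Hyperactions for the remainder of its run and terminates at the empty graph, and that \textsc{Construct}, run on the full sequence of \textsc{Reduce} actions starting from $\G_i$, outputs a (near) perfect matching of $\G_i$ of size $\lfloor |V(\G_i)|/2 \rfloor$.

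Next I would lift this to a matching of $G = \G_0$. The key structural fact, established in \cite{AF}, is that good Hyperactions are \emph{reversible} for matchings: given a near-perfect matching of $\G_{j}$, one can in $O(1)$ time (per Hyperaction) extend/modify it to a near-perfect matching of $\G_{j-1}$ — a vertex-$1$ removal of $v,w$ forces the edge $vw$ into the matching; a good contraction of $\{v,a,b\}$ is undone by routing the matching edge at $v_c$ back through the appropriate two of the three original vertices plus the edge $va$ or $vb$; a Type $1$/Type $2$ max-edge removal changes nothing or is undone analogously; and a vertex-$0$ removal is trivial. Since all of the first $i$ Hyperactions on $\G_0,\dots,\G_i$ are good (by hypothesis) and since, w.h.p., all Hyperactions from $\G_i$ onward are good (by the previous paragraph), \textsc{Construct} walks back through the \emph{entire} reduction sequence $\G_\tau = \emptyset, \dots, \G_i, \dots, \G_0 = G$, each step taking $O(1)$ amortized work, and produces a matching of $G$. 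A parity/size check shows this matching has size $\lfloor n/2 \rfloor$: each good Hyperaction preserves the "near-perfect" property, $\emptyset$ has the empty (trivially near-perfect) matching, and $n$ is even by hypothesis, so the final matching saturates all $n$ vertices.

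For the running time: \textsc{Reduce} performs $O(n)$ actions in total (each action removes at least one configuration point, or in the case of a contraction bounds the work by the degrees involved, which are $O(k)=O(1)$), and with an appropriate data structure each action costs $O(1)$ amortized, so \textsc{Reduce} runs in $O(n)$ time; this is the content of the running-time analysis in \cite{AF}, which only uses $k = O(1)$ and bounds on the excess that hold w.h.p.\ here (the excess stays $O(\log^2 n)$ by Lemma \ref{hyper} together with Lemmas \ref{general} and \ref{stopF} as invoked in the proof of Lemma \ref{ind}). \textsc{Construct} likewise does $O(1)$ amortized work per Hyperaction and hence $O(n)$ total. Combining the two, \textsc{Reduce-Construct} runs in $O(n)$ time and, w.h.p., outputs a perfect matching of $G$.

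The main obstacle I anticipate is not any single estimate but the careful verification that the conditioning is harmless: we need that conditioning on the w.h.p.\ event "a good index $i$ with the stated properties exists" does not distort the distribution of $\G_i$ away from the uniform configuration model on its degree sequence, so that the $k\in\{3,4\}$ results of \cite{AF} may be invoked as a black box. This follows because, given $\bd(i)$ (the degree sequence of $\G_i$), $\G_i$ is uniform among loopless configurations with that degree sequence — a fact already noted in the excerpt for every $\G_i$ — and the event in question is measurable with respect to the history up to step $i$ together with $\bd(i)$; hence the future evolution is still governed by a fresh uniform configuration on $\bd(i)$. The only real care needed is to ensure the w.h.p.\ failure probabilities from \cite{AF} (which are $o(\nu^{-1/2})$ in the relevant parameter $\nu = |V(\G_i)|$) combine with $|V(\G_i)| = \Omega(n)$ to give $o(n^{-1/2})$ overall, which they do since $\nu = \Theta(n)$.
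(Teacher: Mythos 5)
Your proposal is correct and follows essentially the same route as the paper: the paper invokes Theorem 1 of \cite{AF} (the $k\in\{3,4\}$ analysis) to get a maximum matching of $\G_i$ in $O(n)$ time, and Lemma 7 of \cite{AF} (the reversibility of good Hyperactions, since no bad contractions or vertex-0 removals occur) to extend it to a maximum matching of $\G_0$. Your additional remarks on the uniformity of $\G_i$ given its degree sequence and on the parity bookkeeping are exactly the details the paper leaves implicit.
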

\begin{proof}
Indeed if (i) occurs then the first $i$ Hypractions are good. Therefore no bad contractions  or 0-vertex removals are performed.
Hence Lemma 7 of \cite{AF} implies that \textsc{backtrack} extends any maximum matching of $\G_i$ to a maximum matching of $\G_0$. 
Finally conditions (ii), (iii) and Theorem 1 of \cite{AF} imply 
that  w.h.p.\@
\textsc{Reduce-Construct} finds a perfect  matching of $\G_i$ in $O(n)$ time. 
\end{proof}
\noindent{\textbf{ Proof of Theorem 1:}}
The case $k=3,4$ follows directly from Lemma \ref{34} with $i=0$, while for $k=5,6,7$ 
by iteratively applying Lemma \ref{567} we can find $i$ that satisfies the conditions
of Lemma \ref{34}. 

Given Lemma \ref{567} for $k\geq 8$ it suffices to prove the following statement:
 Let $\G_0=G\in \cC_{3,k}$ be a random graph with degree sequence $\bd$, maximum degree $k$, minimum degree 3 and no loops. Then w.h.p.\@
\begin{itemize}
\item[] i) the first $\tau_{7}$ Hyperactions are good,  
\vspace{-2mm}
\item[] ii)  $|E(G_{\tau_k-1})|=\Omega(n).$ 
\end{itemize}
We proceed to prove the above statement by induction. The base case follows from Lemma \ref{ind} with $k=8$. Its inductive step also follows directly from Lemma \ref{ind} and the inductive Hypothesis.   
\qed
\subsection{Overview}
In Subsection \ref{pr} we study how the good Hyperactions effect the expected changes of $n_{r,i}$ and $ex_{\ell,i}$ respectively (done in Lemmas  Lemmas \ref{multbounds1} and \ref{multbounds}).
Later using the expected change of $ex_{\ell,i}$ we show that for $i<\hat{t}_k$ the $i^{th}$ Hyperaction is good. Here, $\hat{t}_k=t_{k-1}$ if $k\geq 8$ and $\hat{t}_k=\tau_{k-1}$ if $k=5,6,7$.

We prove Lemmas \ref{ind} and \ref{567} in Sections \ref{sind} and \ref{s567} respectively.
For the proof of Lemma \ref{ind} we start by showing that at time $t_{k-1}$ the event $\cD_{k-1,k-1,t_{k-1}}$ occur (done in Lemma \ref{halfb}). We let $X_{r,i}=n_{r,i}-\ca n_{r-i,i}$ for $4\leq r\leq k-2$. Thus $X_{r,i}>-[\log^2 n -(k-1)]n^{0.8}/2^r$ for $i<t_{k-1}$. We argue that if $i<t_{k-1}$ and $X_{r,i}$ is close to $[\log^2 n -(k-1)]n^{0.8}/2^r$ then after the $i^{th}$ Hyperaction it will increase in expectation. In Lemma \ref{halfb2} we use a similar argument 
in order to show that $X_{k-1,i}=n_{k-1,i}-\ca n_{k-1-i,i}>-[\log^2 n -(k-1)]n^{0.8}/2^{k-1}$ for $i\leq \tau_{k-1}$. 

For the proof of Lemma \ref{567} we compare the rate of decrease of $ex_{k,i}$ and $e_i$. We show that the first one is larger and we argue that it will reach 0 (done at time $\tau_{k-1}$) before the number of edges becomes sublinear.
\section{Notation - Preliminaries Results}
In the first subsection of this section we  collect  various pieces of notation that we will use for ease  of  reference. In the second subsection we prove results that either concern the Hyperactions or are used in multiple Sections.
\subsection{Notation}
 $\Gamma_0,\Gamma_1,...,\Gamma_{\tau}$ is a sequence of graphs that is generated by \textsc{Reduced}. $\Gamma_0=G$ is the input and $\Gamma_{\tau}$ is the empty graph. Every graph in the sequence has minimum degree 3 except the last one. To go 
from $\G_i$ to $\G_{i+1}$ \textsc{Reduce} performs a Hyperaction which may be one of the Hyperactions of Interest (a.k.a.  good Hyperactions), listed in Subsection 2.2. 
Furthermore as pointed in \cite{AF} given the degree sequence $\bd_i$ of $\G_i$ we have that $\G_i$ is a random (multi)-graph with degree sequence $\G_i$ and no loops.
\vspace{3mm}
\\ Observe that at every Hyperaction  a max-edge removal is performed, therefore $2e_i \leq 2e_0-2i$ for $i \leq \tau$.
Thereafter if our initial graph has $n$ vertices and  maximum degree $k$ then 
$2e_i\leq kn -2i$. Since $e_\tau=0$  every $i$ smaller than $\tau$ satisfies
\begin{align}\label{ed1}
 i\leq \tau \leq kn/2.
\end{align}
For a graph $G$, $j,\ell \in \mathbb{N}$ we let:
\begin{itemize}
\item $\delta(G)$ and $\Delta(G)$ be the minimum and maximum degrees of $G$ respectively, 
\item $n_j(G)$ is the number of vertices of $G$ of degree $j$,
\item $p_j(G):=\frac{jn_j(G)}{2|E(G)|}$,
\item $p_{>j}(G):=\sum_{h>j} p_h(G)$,
\item $ ex_\ell(G):=\sum_{v \in V(G)} [d(v)-\ell]\mathbb{I}(d(v)>\ell)$, 
\end{itemize}
We denote by $\delta_i,\Delta_i,n_{j,i},p_{j,i},p_{>j,i}$ and $ex_{\ell,i}$ the corresponding quantities of $\G_i$.  Furthermore we let $e_j:=|E(\G_j)|$.
\vspace{3mm}
\\We let $\ca=1.17$. For $\ell \in \mathbb{N}$ we define the set  
of $(\ca,3,\ell)$-\emph{dominant} random graphs $\cC_{3,\ell}$ by
$$\cC_{3,\ell}:=\{G: \cD_{\ell,j}(G) \text{ holds  for all } 3<j\leq \ell\}$$
where 
$$\cD_{\ell,j}(G):=\{ n_j(G) \geq \ca n_{j-1}(G) - (\log^2 n -\ell)n^{0.8}/2^j\}.$$
Observe that from the above definition follows that $ \cC_{3,\ell} \subseteq \cC_{3,\ell-1}$. We denote $\cD_{\ell,j}(\G_i)$ by $\cD_{\ell,j,i}$.
\vspace{3mm}
\\ 
Given the sequence $\Gamma_0,\Gamma_1,...,\Gamma_{\tau}$, for $3\leq j =O(1)$ we define the following stopping times
\begin{itemize}
\item $\tau_j:=\min\{i:  \G_i\text{ has maximum degree }j \text{ or }
e_i< n^{0.9}\}$, 
\item $t_j:=\min\{i: \G_i \notin \cC_{3,j} \text{ or }
e_i < n^{0.9}\}$ and
\item $t^*_{j}=\min\{\tau_j,t_j\}$.
\end{itemize}
We later show that if $\G_0\in \cC_{3,k} \subseteq \cC_{3,k-1}$ then
 w.h.p.   $\tau_{k-1}=\tau_{k-1} < t_{k-1}$.
\vspace{3mm}
\\For $\ell,j \in \mathbb{N}$ we let $F_{\ell,j}(G)$  be the event that
\begin{itemize}
    \item[] i) $ex_{\ell,i} \leq  \log^2 n$ for $0\leq i < j$,
    \vspace{-2mm}
    \item[] ii) \textsc{Reduce} applies a good  Hyperaction to $\G_i$ for $0\leq i < j$
    \vspace{-2mm}
    \item[] iii) for every $i\leq j$ there exists $z_i$, $i-\log^2 n/ (k-2) \leq z_i < i$ such that $ex_{\ell,z_i}=0$
\end{itemize}
\noindent We let 
 $Q_\ell(G)$ be  the event that  \textsc{Reduce} applies
a good Hyperaction to every 
graph $\G'$ of the sequence  $\G_0,\G_1,...,\G_{\tau}$ that satisfies 
$ex_{\ell}(\G') \leq \log^2 n$,  $\Delta(\G')>3$ and $e(\G')\geq n^{0.9} $.
\vspace{3mm}
\\In multiple places we are going to use  the Azuma-Hoeffding inequality (see \cite{AZ}, \cite{H}). \begin{lem}\label{ah}
Let $b \in \mathbb{N}$. 
For $i < \tau_3$ let $X_i$ be a random variable that  is bounded by $b$.
Let $Y_i'=  \E{X_i|\cF_i},$
 where $\cF_i$ is the filtration determined by $\G_0=G$ and the first $i$ Hyperactions.
Let $Y_i=  \E{X_i|\G_i},$ and assume that  $Y_i'=Y_i$ for all $i\leq \tau_3$.
Then for any $t>0$ and any $0\leq i_1<i_2 < \tau_3$
$$\pr\bigg(\bigg|\sum_{j=i_1}^{i_2-1}( Y_j- X_j) \bigg|>t\bigg) \leq 2\exp \bigg\{-\frac{t^2}{2 b^2 (i_2-i_1) }\bigg\}.$$
\end{lem}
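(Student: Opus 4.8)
The plan is to recognise the partial sums $S_m := \sum_{j=i_1}^{m-1}(Y_j - X_j)$, $i_1 \le m \le i_2$, as a martingale with bounded increments with respect to the filtration $(\cF_m)_{m\ge 0}$, and then to run the standard Chernoff--Hoeffding argument.

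First I would verify the martingale property. Since $\G_m$ is a deterministic function of $G$ and of the first $m$ Hyperactions, $\sigma(\G_m)\subseteq\cF_m$; hence $Y_m=\E{X_m\mid\G_m}$ is $\cF_m$-measurable, and so are $X_{i_1},\dots,X_{m-1}$, so $S_m$ is $\cF_m$-measurable. The hypothesis $Y_m'=Y_m$ is exactly what yields
\[
\E{S_{m+1}-S_m\mid\cF_m}=\E{Y_m-X_m\mid\cF_m}=Y_m-\E{X_m\mid\cF_m}=Y_m-Y_m'=0 .
\]
To handle the random horizon $\tau_3$ I would extend the sequence past the stopping time by setting $X_i:=Y_i$ for $i\ge\tau_3$, so that the increment $Y_i-X_i$ vanishes there. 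Because the events $\{\Delta(\G_m)=3\}$ and $\{e_m<n^{0.9}\}$ are $\sigma(\G_m)$-measurable, $\tau_3$ is an $(\cF_m)$-stopping time, so the extended process $(S_m)$ remains a martingale, and on $\{i_2<\tau_3\}$ it coincides with the sum appearing in the statement.

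Next I would bound the increments. From $|X_m|\le b$ and $|Y_m|=|\E{X_m\mid\G_m}|\le b$, the increment $D_m:=Y_m-X_m$ lies, conditionally on $\cF_m$, in the interval $[\,Y_m-b,\,Y_m+b\,]$ of length $2b$, with $Y_m$ being $\cF_m$-measurable; Hoeffding's lemma then gives $\E{e^{\lambda D_m}\mid\cF_m}\le e^{\lambda^2(2b)^2/8}=e^{\lambda^2 b^2/2}$ for every $\lambda\in\R$. Iterating this conditional bound over $j=i_1,\dots,i_2-1$ yields $\E{e^{\lambda(S_{i_2}-S_{i_1})}}\le e^{\lambda^2 b^2(i_2-i_1)/2}$, so Markov's inequality gives $\pr(S_{i_2}-S_{i_1}\ge t)\le e^{-\lambda t+\lambda^2 b^2(i_2-i_1)/2}$; taking $\lambda=t/(b^2(i_2-i_1))$ produces the one-sided estimate $\exp\{-t^2/(2b^2(i_2-i_1))\}$, and applying the same argument to $-(Y_j-X_j)$ and adding the two bounds gives the factor $2$.

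The only genuinely delicate point is the bookkeeping around $\tau_3$: one has to be sure that $\tau_3$ is adapted to $(\cF_m)$ so that stopping/extending preserves the martingale property, and (whenever the lemma is later invoked) that the hypothesis $Y_i'=Y_i$ really holds, i.e.\ that the conditional mean of $X_i$ given the full history $\cF_i$ depends only on the current graph $\G_i$. Granting this, the rest is textbook Azuma--Hoeffding.
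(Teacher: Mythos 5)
Your proposal is correct. The paper offers no proof of this lemma at all --- it is stated as a standard fact with citations to Azuma and Hoeffding --- so there is nothing to compare against beyond noting that yours is the standard textbook derivation. One detail worth highlighting: the constant $2b^2$ in the denominator of the exponent cannot be obtained from the crude symmetric form of Azuma (which, from $|Y_j - X_j|\le 2b$, would only give $8b^2$); it requires exactly the observation you make, namely that conditionally on $\cF_j$ the increment $Y_j-X_j$ lies in an interval of length $2b$ with $\cF_j$-measurable endpoints and conditional mean zero, so Hoeffding's lemma yields $\E{e^{\lambda(Y_j-X_j)}\mid\cF_j}\le e^{\lambda^2 b^2/2}$. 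Your handling of the random horizon (extending the increments to be zero past the $(\cF_m)$-stopping time $\tau_3$) is also the right way to make the statement ``for $i_2<\tau_3$'' rigorous, a point the paper glosses over.
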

\subsection{Preliminary Results}\label{pr}
In Lemmas \ref{multbounds1} and \ref{multbounds} we study how the good Hyperactions effect the expected changes of $n_{r,i}$ and $ex_{\ell,i}$ respectively. 
As discussed earlier given the degree sequence of $G_i$, $\bd(i)$,  we have that $G_i$ is uniformly distributed among all configurations with degree sequence $\bd(i)$ and no loops.
The mild conditioning resulting from imposing the condition that $\G_i$ has no loops 
is insignificant and results in $(1+o(1))$ 
factors. For clarity of the presentation we omit such factors.
\begin{lem}\label{multbounds1}
Let $4< k= O(1)$. 
Let $\Gamma_0=G$ be a random (multi)-graph with degree sequence $\bd$, maximum degree $k$, minimum degree 3 and no loops. Suppose that 
$\Gamma_i$ satisfies $ex_{k,i}\leq   \log^2 n$,
has maximum degree at least $k$ and $e_i \geq n^{0.9}$. 
Conditioned on the event that a good Hyperaction is applied to $\G_i$, 
then for $3\leq r \leq k-1$ 
\begin{align}\label{bddv}
|n_{r,i+1}-n_{r,i}|\leq 5.
\end{align}
Furthermore for $3 \leq r \leq k-2,$ 
\begin{align}\label{ev}
\E{n_{r,i+1}-n_{r,i}|\G_i} &=
 p_{r+1,i}-p_{r,i} 
 +p_{3,i}\bigg[\sum_{j_1+j_2-2=r} p_{j_1,i}p_{j_2,i}       
-2p_{r,i} \bigg] 
 +o(n^{-0.75}),
\end{align}
and for $r=k-1$,
\begin{align}\label{ev2}
\E{n_{k-1,i+1}-n_{k-1,i}|\G_i} &=
 p_{k,i}-p_{k-1,i} 
 +p_{3,i}\bigg[\sum_{j_1+j_2-2=k-1} p_{j_1,i}p_{j_2,i}       
-2p_{k-1,i} \bigg]
\\& +\mathbb{I}(\Delta_i=k) 
 +o(n^{-0.75}).\nonumber 
\end{align}
In addition 
\begin{align}\label{bdedges}
|e_{i+1}-e_{i}|\leq 6,
\end{align}
and
\begin{align}\label{edgedrift}
\E{e_{i+1}-e_{i}|\G_i} = -1-2p_{3,i} + o(n^{-0.75})
\end{align}
\end{lem}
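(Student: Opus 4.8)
The plan is to compute the effect of each type of good Hyperaction on the degree-count vector $(n_{3,i},\dots,n_{k,i})$ and on $e_i$, and then average over the types weighted by their probabilities, using Lemma \ref{lem31} (in the form of Lemma \ref{hyper}) to control the contribution of the rarer types. First I would dispose of the worst-case bounds \eqref{bddv} and \eqref{bdedges}: a good Hyperaction is of Type 1, 2, 3a, 3b, or 4, and each removes at most one edge by a max-edge removal, then performs at most two good contractions plus possibly an auto-correction contraction; each such sub-action touches a bounded number of vertices (the endpoints of the removed edge together with the contracted triples), so $|n_{r,i+1}-n_{r,i}|$ and $|e_{i+1}-e_i|$ are bounded by absolute constants, and $5$ and $6$ are comfortably large enough after checking the extreme case (a Type 4 Hyperaction). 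These deterministic bounds are what is needed to apply Azuma–Hoeffding later via Lemma \ref{ah}.

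For the drift computations \eqref{ev}, \eqref{ev2}, \eqref{edgedrift}, the heart of the argument is that, conditioned on the degree sequence $\bd(i)$, the graph $\G_i$ is uniform among loopless configurations, so when \textsc{Reduce} reveals the pairing of a configuration point, its partner is (essentially) a uniformly random remaining point, and the probability that this partner lies on a vertex of degree $j$ is $p_{j,i}+o(n^{-0.75})$ (the $o(n^{-0.75})$ slack absorbs the no-loops conditioning and the fact that a bounded number of points have been used up, using $e_i\ge n^{0.9}$ and $ex_{k,i}\le\log^2 n$). I would first handle the dominant contribution: a Type 1 Hyperaction (probability $1-O(n^{-0.9})$ by Lemma \ref{hyper}) removes one edge $\{u,v\}$ at a max-degree vertex $u$. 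Removing this edge drops $u$ and $v$ each by one degree, which changes $n_{r,i}$ by $-\mathbb{I}(d(u)=r)-\mathbb{I}(d(v)=r)+\mathbb{I}(d(u)=r+1)+\mathbb{I}(d(v)=r+1)$; taking expectations, the endpoint $v$ contributes $p_{r+1,i}-p_{r,i}$, while the endpoint $u$, being of maximal degree, contributes $\mathbb{I}(\Delta_i=k)$ only to the $r=k-1$ line — this is exactly the discrepancy between \eqref{ev} and \eqref{ev2}. The term $p_{3,i}[\sum_{j_1+j_2-2=r}p_{j_1,i}p_{j_2,i}-2p_{r,i}]$ comes from a good contraction: a contraction happens (inside the Hyperaction) only when a degree-$2$ vertex is created by the edge removal, which occurs with probability $p_{3,i}+o(\cdot)$ (the removed-edge endpoint had degree $3$); then a vertex $v$ of degree $2$ with neighbours $a,b$ is merged into $v_c$, removing $a$ and $b$ (degrees $j_1,j_2$) and creating $v_c$ of degree $j_1+j_2-2$ in the Type 3a case, giving the $\sum_{j_1+j_2-2=r}p_{j_1,i}p_{j_2,i}-2p_{r,i}$ contribution after averaging over $j_1,j_2$ independently uniform by the configuration-model principle. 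Types 2, 3b, 3c-good, and 4 occur with probability $o(n^{-0.9})$ (Lemma \ref{hyper}) and contribute $O(1)\cdot o(n^{-0.9})=o(n^{-0.75})$ to every expectation, so they are swept into the error term; likewise the cascade of vertex-$1$ removals following a contraction has no effect on degrees of vertices not involved and, being of bounded length, contributes only to the error. For \eqref{edgedrift}: Type 1 removes exactly one edge ($-1$), and with probability $p_{3,i}+o(\cdot)$ a good contraction follows, which removes two further edges (the two edges of the degree-$2$ vertex $v$), giving $-1-2p_{3,i}+o(n^{-0.75})$; the further vertex-$1$/vertex-$0$ removals triggered are, on the event that a good Hyperaction is performed, absent or negligible, and Types 2--4 again contribute only $o(n^{-0.75})$.

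The main obstacle I anticipate is bookkeeping the event structure cleanly: one must verify that, conditioned on "a good Hyperaction is applied to $\G_i$", the branching (did a degree-$2$ vertex get created? is the contraction of Type 3a, 3b, or 3c?) has exactly the claimed conditional probabilities up to $o(n^{-0.75})$, and that the $o(n^{-0.75})$ errors genuinely are uniform in $r$ and do not hide a term of order $p_{3,i}\cdot n^{-0.9}$ that would be too large — here one leans on $e_i\ge n^{0.9}$ so that $1/e_i=O(n^{-0.9})$ and on $ex_{k,i}\le\log^2 n$ so that the max-degree endpoint's behaviour (whether $\Delta_i=k$ or $\Delta_i>k$, and whether a second good contraction of Type 4 is possible) is pinned down except with polynomially small probability. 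The subtlety that the problem statement itself flags — that the $r=k-1$ line carries the extra $\mathbb{I}(\Delta_i=k)$ term while $r\le k-2$ does not — is precisely because the other endpoint of the max-edge removal has degree $\Delta_i$, which equals $k-1$ after the removal only when $\Delta_i=k$; for $r\le k-2$ no created or destroyed vertex can have been the max-degree vertex under $ex_{k,i}\le\log^2 n$, so that endpoint never lands in the count $n_{r,i}$. I would make this last point rigorous by noting that the max-degree vertex has degree $\Delta_i\ge k-1>r$ throughout, so it can only enter $n_{r,i+1}$ if $\Delta_i\in\{r,r+1\}$, impossible for $r\le k-2$.
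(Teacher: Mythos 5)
Your proposal is correct and follows essentially the same route as the paper: condition on the Hyperaction being good, use the configuration-model uniformity to assign probability $p_{j,i}+o(\cdot)$ to the revealed neighbour having degree $j$, decompose the change of $n_{r,i}$ into the max-degree endpoint (which yields the $\mathbb{I}(\Delta_i=k)$ term only at $r=k-1$ since $\Delta_i\ge k$), the other endpoint (yielding $p_{r+1,i}-p_{r,i}$), and the Type 3a contraction occurring with probability $p_{3,i}$ (yielding $p_{3,i}[\sum_{j_1+j_2-2=r}p_{j_1,i}p_{j_2,i}-2p_{r,i}]$ and the $-3$ edges for \eqref{edgedrift}), with Types 2, 3b, 4 absorbed into the error term. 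The only slips are cosmetic: Type 1 occurs with probability $1-p_{3,i}-o(1)$, not $1-O(n^{-0.9})$ (your subsequent accounting treats the probability-$p_{3,i}$ contraction correctly anyway), and the $\mathbb{I}(\Delta_i=k)$ term comes from the max-edge removal common to every Hyperaction rather than from Type 1 specifically, which is why its coefficient is $1$ and not $1-p_{3,i}$.
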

\begin{proof}
Fix $r$, $3\leq r \leq k-2$.
Throughout this Lemma we condition on the event that the $i^{th}$ Hyperaction is good. 
$\Delta_i\geq k > 3$ thus \textsc{Reduce} proceeds and performs a Hyperaction of Type 1,2,3a,3b or 4 with probability  $1-p_{3,i}, o(n^{-0.75}), p_{3,i}, o(n^{-0.75})$ and $o(n^{-0.75})$ respectively. All of the Hyperactions start with a max-edge removal. That is 
a random vertex of maximum degree $v$ is chosen along with a random neighbor of it $u$ and the edge $\{v,u\}$ is removed. $v$ is a vertex of maximum degree and thus $d(v)=\Delta_i \geq k$. We summarize the case analysis that follows at Tables 1 and 2 given below.
\vspace{3mm}
\\If $d(u)>3$ then a Hyperaction of Type 1  occurs. If
$d(u) \neq d(v)$ then as a result $\G_i$ ``loses" a vertex of degree $d(v)$, a vertex of degree $d(u)$ and then, it gains  
a vertex of degree $d(v)-1$ and a vertex of degree $d(u)-1$. If $d(u)=d(v)$
then   $\G_i$ ``loses" 2 vertices of degree $d(v)$  and   it gains  
two vertex of degree $d(v)-1$. Given the above for the  change  of
$n_{r,i}$ we have the following two cases:

$\bullet$ \textbf{Case a:}$d(u)=r+1$. Then, $n_{r,i+1}-n_{r,i}=1$. Case (a) occurs with probability $p_{r+1,i}$.

$\bullet$ \textbf{Case b:} $d(u)=r$. Then, $n_{r,i+1}-n_{r,i}=-1$. Case (b) occurs with probability $p_{r,i}$.

If $d(u)=3$ then a Hyperaction of Type 2,3a,3b or 4 occurs. Assume that a Hyperaction of Type 3a occurs, that is  $u$ has 3 neighbors in $\G_i$, let them be $\{v,x_1,x_2\}$ 
and there is no edge from $x_1$ to $x_2$.  In this case 
\textsc{Reduce} proceeds and contracts $\{v,x_1,x_2\}$. The new vertex, say $v_c$,  has degree $d(x_1)+d(x_2)-2$.
For $j\geq 3$   the number of vertices of degree $j$, $n_{j,i}$ is decreased by 1 for every vertex of degree $j$ in $\{v,u,x_1,x_2\}$. Then, for $j\geq 3$, $n_{j,i}$ is increased by 1 for every element of $\{d(v)-1,d(x_1)+d(x_2)-2\}$ that is equal to j.
First we let $4\leq r \leq k-1$  and we consider the following 3 cases: 

$\bullet$ \textbf{Case c:} $d(x_1)=d(x_2)=r$. Then  $n_{r,i+1}-n_{r,i}=-2$.
Case (c) occurs with probability $p_{3,i}p_{r,i}^2$. 

$\bullet$ \textbf{Case d:} $d(x_1)=r,d(x_2)\neq r$ or $d(x_1)\neq r,d(x_2)= r$ . Then  $n_{r,i+1}-n_{r,i}=-1$.
Case (d) occurs with probability $p_{3,i}2p_{r,i}(1-p_{r,i})$. 

$\bullet$ \textbf{Case e:} $d(x_1)+d(x_2)-2=r$. Then the new vertex has degree $r$ and  $n_{r,i+1}-n_{r,i}=1$.
Case (e) occurs with probability $p_{3,i}\cdot p_{d(x_1),i} \cdot p_{d(x_2),i}$.

If $r=3$ then the above cases are modified as follows (recall that $d(u)=3$): 

$\bullet$ \textbf{Case c':} $d(x_1)=d(x_2)=3$. Then
 $n_{3,i+1}-n_{3,i}=-3$.
Case (c') occurs with probability $p_{3,i}^3$.
 
$\bullet$ \textbf{Case d':} $d(x_1)=3,d(x_2)\neq 3$ or $d(x_1)\neq 3,d(x_2)= 3$.
Then  $n_{3,i+1}-n_{3,i}=-2$.
Case (d') occurs with probability $2p_{3,i}^2(1-p_{3,i})$.

$\bullet$ \textbf{Case e':} $d(x_1),d(x_2)>3$.
Then  $n_{3,i+1}-n_{3,i}=-1$.
Case (e') occurs with probability $p_{3,i}(1-p_{3,i})^2$.

From the case analysis above and the definition of the Hyperactions  it follows that \eqref{bddv} holds for $3\leq r\leq k-2$. The upper bound  in \eqref{bddv} is achieved when a Hyperaction of Type 4 takes place and all 5 vertices involved in the contractions have degree 3. 

For $4\leq r\leq k-2$ we summarize the case analysis in Table 1.  
\begin{table}[h]
\centering
    \begin{tabular}{| c | c | c | c | c |}
    \hline
  \textbf{Case} &${d(u)}$ & \textbf{Hyperaction  that} & $n_{r,i+1}-n_{r,i}$ & \textbf{probability occurring}
 \\ & &   \textbf{takes place}& &
     \\ \hline
 Case a & $r+1$ & Type 1 &1 & $p_{r+1,i}$ \\ \hline
 Case b & $r$ & Type 1 &-1& $p_{r,i}$ \\ \hline
 Case c & 3 & Type 3a &-2& $p_{3,i}p_{r,i}^2$ \\ \hline
 Case d & 3 & Type 3a &-1& $2p_{3,i}^2(1-p_{r,i})$ \\ \hline
 Case e & 3 & Type 3a &1& $p_{3,i}\sum_{j_1+j_2-2=r}p_{j_1,i}p_{j_2,i}$ \\ \hline
    \end{tabular}
    \caption{Case analysis for $4\leq r \leq k-2$}
\end{table}
\\A Hyperaction of Type 2,3b or 4 occurs with probability $o(n^{-0.75})$.  Thus,
\begin{align*}
\E{n_{r,i+1}-n_{r,i}|\G_i}& =
 p_{r+1,i}-p_{r,i} -2p_{3,i}p_{r,i}^2 -2p_{3,i}p_{r,i}(1-p_{r,i})
 \\& 
 +p_{3,i}\sum_{j_1+j_2-2=r} p_{j_1,i}p_{j_2,i}        
 +o(n^{-0.75})
\\&=
 p_{r+1,i}-p_{r,i} 
 +p_{3,i}\bigg[\sum_{j_1+j_2-2=r} p_{j_1,i}p_{j_2,i}       
-2p_{r,i} \bigg] 
 +o(n^{-0.75}).
\end{align*}
If $r=3$ then Case (b) does not apply since in Case b we assume that $d(u)>3$. In place of Table 1 we have Table 2 given below.
\begin{table}[h]
\centering
    \begin{tabular}{| c | c | c | c | c |}
    \hline
  \textbf{Case} &${d(u)}$ & \textbf{Hyperaction  that} & $n_{3,i+1}-n_{3,i}$ & \textbf{probability occurring}
 \\ & &   \textbf{takes place}& &
     \\ \hline
 Case a & 4 & Type 1 &1 & $p_{4,i}$ \\ \hline
 Case c' & 3 & Type 3a &-3& $p_{3,i}^3$ \\ \hline
 Case d' & 3 & Type 3a &-2& $2p_{3,i}^2(1-p_{3,i})$ \\ \hline
 Case e' & 3 & Type 3a &-1& $p_{3,i}(1-p_{3,i})^2$ \\ \hline
    \end{tabular}
    \caption{Case analysis for $r=3$}
\end{table}
\\A Hyperaction of Type 2,3b or 4 occurs with probability $o(n^{-0.75})$.  Thus,
using the identity   $p_{3,i}\sum_{j_1+j_2-2=3} p_{j_1,i}p_{j_2,i} =0$ ($p_{j,i}=0$ for $j<3$)
 we have,
\begin{align*}
\E{n_{3,i+1}-n_{3,i}|\G_i}& =
 p_{4,i}-3p_{3,i}^3 -4p_{3,i}^2(1-p_{r,i})-p_{3,i}(1-p_{3,i})^2
\\& +p_{3,i}\sum_{j_1+j_2-2=3} p_{j_1,i}p_{j_2,i}        
 +o(n^{-0.75})
\\&=  p_{4,i}-p_{3,i}  +p_{3,i}\bigg[\sum_{j_1+j_2-2=3} p_{j_1,i}p_{j_2,i}       
-2p_{3,i} \bigg] 
 +o(n^{-0.75}).
\end{align*}
For the derivation of \eqref{bddv} for $r=k-1$ and \eqref{ev2} the same analysis applies  modulo the fact 
that  if $d(v)=\Delta_i=k$, i.e. $\mathbb{I}(\Delta_i=k)=1$,
 then due to the max-edge removal  $n_{k-1,i}$ is initially increased by one resulting to the additional $\mathbb{I}(\Delta_i=k)$ term found in \eqref{ev2}.
 
\eqref{bdedges} follows from the definition of the Hyperactions. 

Finally for \eqref{edgedrift} we have the following table:
\begin{table}[h]
\centering
    \begin{tabular}{| c | c | c |}
    \hline
  \textbf{Hyperaction} & $e_{i+1}-e_{i}$ & \textbf{probability occurring}
     \\ \hline
Type 1 & -1&  $1-p_{3,i}$ \\ \hline
Type 3a & -3 &  $p_{3,i}$ \\ \hline
Type 2/3b/4 & O(1) & $o(n^{-0.75})$ \\ \hline
    \end{tabular}
\end{table}
\\Therefore,
 $$\E{e_{i+1}-e_i}= -(1-p_{3,i})-3 p_{3,i}- o(n^{-0.75})= -1-2p_{3,i}+o(n^{0.75}).$$
\end{proof}
\begin{cor}\label{new1}
Let $i_1<i_2$. Assume that the first $i_2-1$ Hyperactions are good. Then $i_2-i_1\leq n^{0.8}$ and $e_{i_2-1}\geq n^{0.9}$ imply that $|p_{j,i_2}-p_{j,i_1}|\leq o(n^{-0.05})$  for all $j \in \mathbb{N}$.
\end{cor}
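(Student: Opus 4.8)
The plan is to telescope the one-step increment bounds of Lemma \ref{multbounds1} over the window $i_1\le i\le i_2$ and then convert the resulting additive control on $n_{j,\cdot}$ and on $e_\cdot$ into control on the ratios $p_{j,\cdot}=jn_{j,\cdot}/(2e_\cdot)$.

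First I would record a lower bound on $e_i$ valid throughout the window. Every Hyperaction performs a max-edge removal, so $e_i$ is non-increasing in $i$; combined with $i_1\le i_2-1$ and the hypothesis $e_{i_2-1}\ge n^{0.9}$ this gives $e_i\ge n^{0.9}$ for all $i_1\le i\le i_2-1$, and then $e_{i_2}\ge n^{0.9}-6\ge n^{0.9}/2$ (for $n$ large) by \eqref{bdedges}. Next, since the first $i_2-1$ Hyperactions are good, the Hyperaction applied to $\G_i$ is good for every $i$ in the window, so \eqref{bddv} and \eqref{bdedges} give $|n_{j,i+1}-n_{j,i}|\le 5$ and $|e_{i+1}-e_i|\le 6$. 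Here I would stress two small points: these are purely structural consequences of goodness (Types $1$--$4$ other than $3c$ each touch only an $O(1)$ number of vertices), so they hold without the auxiliary hypotheses $ex_{k,i}\le\log^2 n$ and $\Delta_i\ge k$ under which Lemma \ref{multbounds1} is phrased; and for degrees $j\notin\{3,\dots,k-1\}$ a bound of the same $O(1)$ order follows in exactly the same way. Summing over the at most $n^{0.8}$ steps between $i_1$ and $i_2$ then yields $|n_{j,i_2}-n_{j,i_1}|\le 5n^{0.8}$ and $|e_{i_2}-e_{i_1}|\le 6n^{0.8}$.

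To finish I would use the identity $p_{j,i_2}-p_{j,i_1}=\frac{j(n_{j,i_2}-n_{j,i_1})}{2e_{i_2}}+p_{j,i_1}\cdot\frac{e_{i_1}-e_{i_2}}{e_{i_2}}$, bound the first summand by $5jn^{0.8}/(n^{0.9}/2)=10jn^{-0.1}$, and use $0\le p_{j,i_1}\le 1$ together with $|e_{i_1}-e_{i_2}|\le 6n^{0.8}$ and $e_{i_2}\ge n^{0.9}/2$ to bound the second by $12n^{-0.1}$. This gives $|p_{j,i_2}-p_{j,i_1}|=O(jn^{-0.1})$, which is $o(n^{-0.05})$ for every fixed $j$ — indeed for any $j=n^{o(1)}$ — and in all the applications of this corollary $j$ is bounded by a constant.

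I do not expect a genuine obstacle here: the argument is a short telescoping followed by a two-line estimate. The only places that need care are bookkeeping — confirming that the hypothesis "the first $i_2-1$ Hyperactions are good" really does supply the one-step bounds at every $i$ with $i_1\le i\le i_2-1$ — and the remark that the deterministic increments $|n_{j,i+1}-n_{j,i}|\le 5$ and $|e_{i+1}-e_i|\le 6$ do not rely on the distributional hypotheses of Lemma \ref{multbounds1}. The stray factor $j$ in the final bound is harmless because $j=O(1)$ wherever the corollary is invoked.
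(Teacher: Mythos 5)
Your proposal is correct and is essentially the paper's argument spelled out in full: the paper's proof is the single line ``Follows directly from \eqref{bdedges}'', i.e.\ exactly the telescoping of the $O(1)$ one-step increments of $e_i$ (and, implicitly, of $n_{j,i}$ via \eqref{bddv}) over the at most $n^{0.8}$ steps, followed by the ratio estimate you wrote out. Your observation that the deterministic bounds $|n_{j,i+1}-n_{j,i}|\le 5$ and $|e_{i+1}-e_i|\le 6$ need only the goodness of the Hyperactions, plus the harmlessness of the factor $j$, fills in the details the paper leaves implicit.
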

\begin{proof}
Follows directly from \eqref{bdedges}.
\end{proof}
In the proof of Lemma \ref{ind} we will invoke \eqref{ev2} to control $n_{k-1,i_1}-n_{k-1,i_2}$ for some $i_1,i_2$. We use the following lemma to control the change of  the most problematic term appearing in \eqref{ev2}, namely of the term  $\mathbb{I}(\Delta_i=k)$

\begin{lem}\label{new2}
Let $8\leq k=O(1)$. Assume $i_1,i_2$ satisfy  $i_1 < i_2$, $n^{0.7} \leq i_2-i_1 \leq n^{0.8},$ $e_{i_2-1}\geq n^{0.9}$ and the event $F_{k-1,i_2}$ occurs. Then w.h.p. 
\begin{align}
\sum_{i=i_1}^{i_2-1} \mathbb{I}(\Delta_i=k) \geq (i_2-i_1)[0.999 - (k-2)p_{3,i_1}] .
\end{align}
\end{lem}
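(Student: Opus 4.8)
The plan is to lower-bound $\sum_{i=i_1}^{i_2-1} \mathbb{I}(\Delta_i = k)$ by controlling $n_{k,i}$ over the interval $[i_1, i_2)$. The intuition is that $\Delta_i < k$ can only happen if $n_{k,i} = 0$, so I want to show that $n_{k,i}$ stays bounded away from $0$ for all but a negligible fraction of the steps in $[i_1,i_2)$; equivalently, the number of indices $i$ in that interval with $n_{k,i} = 0$ is at most roughly $(i_2-i_1)(k-2)p_{3,i_1} + (i_2-i_1)\cdot o(1)$. First I would set up the recursion for $n_{k,i}$. Since $\G_0$ has maximum degree $k$ and (by hypothesis $F_{k-1,i_2}$, which forces all Hyperactions in the range to be good) only good Hyperactions occur, a vertex of degree $k$ can only be created during a Type 3a Hyperaction via the contracted vertex $v_c$ of degree $d(x_1)+d(x_2)-2 = k$; it is destroyed whenever the max-edge removal hits a degree-$k$ vertex (always, when $\Delta_i = k$, since then $d(v) = \Delta_i = k$) or when a degree-$k$ vertex is consumed as $u$, $x_1$, or $x_2$ in a contraction. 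The key accounting is: in each step with $\Delta_i = k$, exactly one degree-$k$ vertex is removed by the max-edge removal (plus possible further losses/gains of $O(1)$), while degree-$k$ vertices are \emph{created} only during Type 3a Hyperactions, which occur with probability $p_{3,i} + o(n^{-0.75})$, each creating at most $O(1)$ of them.

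The core of the argument is a deficiency/potential tracking. Let $D_i$ count the number of indices $j$ in $[i_1, i)$ with $\Delta_j = k$; I want to show $D_{i_2} \ge (i_2-i_1)[0.999 - (k-2)p_{3,i_1}]$ w.h.p. I would argue by the following balance: over $[i_1, i_2)$, every step that is \emph{not} a Type 3a step and has $\Delta_i = k$ decreases $n_{k,i}$ by at least $1$ in expectation (from the max-edge removal), whereas Type 3a steps (which number at most $\sum p_{3,i}(i_2-i_1)(1+o(1))$ in expectation, concentrated by Azuma–Hoeffding via Lemma \ref{ah} applied to the bounded increments from \eqref{bddv}) can raise $n_{k,i}$ by at most $O(1)$ each. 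By Corollary \ref{new1}, since $i_2 - i_1 \le n^{0.8}$ and $e_{i_2-1} \ge n^{0.9}$, we have $p_{3,i} = p_{3,i_1} + o(n^{-0.05})$ throughout, so the expected number of Type 3a steps is at most $(i_2-i_1)p_{3,i_1}(1+o(1))$. Now suppose for contradiction that fewer than $(i_2-i_1)[0.999 - (k-2)p_{3,i_1}]$ steps had $\Delta_i = k$; then more than $(i_2-i_1)[(k-2)p_{3,i_1} + 0.001 - o(1)]$ steps in the interval had $\Delta_i < k$, i.e. $n_{k,i} = 0$. But $n_{k,i}$ starts at $n_{k,i_1} \ge 0$, and to return to $0$ after each creation event it must be driven down; since it is created at most $O(1)$ times per Type 3a step and there are at most $(i_2-i_1)p_{3,i_1}(1+o(1))$ such steps w.h.p., the number of maximal runs on which $n_{k,i} > 0$ is $O((i_2-i_1)p_{3,i_1})$, and the total time spent at $n_{k,i}=0$ is the complement; reconciling the constants (the $(k-2)$ factor coming from the maximal degree-$k$ creation of $k-2$ per contraction-type event, cf. the bound in \eqref{bddex} style reasoning) gives the stated inequality with the $0.999$ slack absorbing the $o(1)$ concentration error.

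The precise bookkeeping of how many steps $n_{k,i}$ can equal zero is the main obstacle. The subtlety is that a single Type 3a step creating a degree-$k$ vertex could, in principle, feed a long stretch of steps with $\Delta_i = k$ (good — that helps us) \emph{or} the degree-$k$ vertices could be consumed quickly; I need to rule out the bad scenario where $n_{k,i}$ oscillates rapidly between $0$ and positive values in a way that wastes many steps at $n_{k,i}=0$ without a matching number of creation (Type 3a) events — but this is exactly controlled because each return to a positive value requires a creation event, so the number of "excursions away from zero" is bounded by the number of Type 3a events, and between two consecutive excursions the time at zero contributes a $\Delta_i < k$ step. I would handle the concentration of $\sum \mathbb{I}(\text{Type 3a at step } i)$ and of $\sum \mathbb{I}(\Delta_i = k)$ jointly via Lemma \ref{ah} (increments bounded by an absolute constant by \eqref{bddv} and the definition of the Hyperactions), using $i_2 - i_1 \ge n^{0.7}$ to make the Azuma deviation term $\exp\{-t^2/(2b^2(i_2-i_1))\}$ with $t = (i_2-i_1)\cdot o(1)$ of the form $o(n^{-0.5})$ — this is where the lower bound $n^{0.7}$ on the interval length is used. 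Combining, $\sum_{i=i_1}^{i_2-1}\mathbb{I}(\Delta_i = k) \ge (i_2 - i_1) - (\text{time at } n_{k,i}=0) \ge (i_2-i_1) - (i_2-i_1)(k-2)p_{3,i_1}(1+o(1)) \ge (i_2-i_1)[0.999 - (k-2)p_{3,i_1}]$ w.h.p., as required.
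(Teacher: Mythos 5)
There is a genuine gap, and it is in the very first step: you identify $\{\mathbb{I}(\Delta_i=k)=0\}$ with $\{\Delta_i<k\}=\{n_{k,i}=0\}$ and then try to bound the time spent at $n_{k,i}=0$. But in the regime where this lemma is applied ($i<\tau_{k-1}$, so $\Delta_i\geq k$ throughout), the indicator $\mathbb{I}(\Delta_i=k)$ fails precisely when $\Delta_i>k$, i.e.\ when contractions have created vertices of degree larger than $k$ and $ex_{k,i}>0$; having $n_{k,i}>0$ does not give $\Delta_i=k$. Your final accounting $\sum_i\mathbb{I}(\Delta_i=k)\geq(i_2-i_1)-(\text{time at }n_{k,i}=0)$ is therefore false, and the quantity your argument never controls --- the number of steps with $\Delta_i>k$ --- is exactly what the $(k-2)p_{3,i_1}$ term in the statement is there to absorb. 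The paper's proof is a bookkeeping argument on the excess $ex_{k,i}$ rather than on $n_{k,i}$: every step with $\Delta_i>k$ decreases $ex_k$ by at least $1$, because the max-edge removal is performed at a vertex of maximum (hence $>k$) degree; $ex_k$ can only be replenished by a contraction creating a vertex of degree $\ell>k$, which contributes $Z_i=\ell-k\leq k-2$ with $\E{Z_i|\G_i}\leq(k-2)p_{3,i}+o(1)\leq(k-2)p_{3,i_1}+o(1)$ by Corollary \ref{new1}; and since $0\leq ex_{k,i_2}\leq ex_{k,i_1}+\sum_i\{Z_i-[1-\mathbb{I}(\Delta_i=k)]\}$ with $ex_{k,i_1}\leq\log^2 n$, a shortfall in $\sum_i\mathbb{I}(\Delta_i=k)$ forces $\sum_i Z_i$ to exceed its conditional mean by about $0.001(i_2-i_1)\geq 0.001\,n^{0.7}$, which Azuma--Hoeffding rules out. (This is also where the $k-2$ really comes from: it is the maximal excess $\ell-k\le(2k-2)-k$ of the single new vertex, not ``$k-2$ degree-$k$ creations per contraction.'')

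A secondary problem: even for the event you do try to control, bounding the number of excursions of $n_{k,i}$ away from $0$ by the number of creation events does not bound the total time spent at $n_{k,i}=0$. The zero-stretches between consecutive excursions are waiting times for the next creation event and can be long precisely when creation events are rare --- fewer Type 3a steps means \emph{more} time at zero, not less --- so the claim that the time at $n_{k,i}=0$ is at most $(i_2-i_1)(k-2)p_{3,i_1}(1+o(1))$ does not follow from anything you wrote. The mechanism that actually drives the conclusion is the deterministic unit decrement of $ex_k$ at every step with $\Delta_i>k$, balanced against the rare, bounded increments $Z_i$; that is the quantity to track.
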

\begin{proof}
In the event $F_{k,i_2}$ occurs  for $i_1 \leq i< i_2$ the inequality $ ex_{k,i}< \log^2 n$  holds and the $i^{th}$ Hyperaction is good. 

Let $Z_i=\ell-k$ if the $i^{th}$ Hyperaction  is of Type 2,3a,3b or 4  and the new vertex has degree $\ell>k$.
Otherwise let $Z_i=0$.

$ex_{k,i}<\log^2 n$ implies that $p_{\ell,i} \leq (\log^2n +k)/2e_i =o(n^{-0.75})$ for $\ell \leq \log^2n +k$ and $p_{\ell,i} =0$ for $\ell>\log^2 n+k$.
 Hence for $\ell>k,$ 
$$\pr(Z_i=\ell-k)= p_{3,i} \sum_{d_1+d_2-2=\ell} p_{d_1,i}p_{d_2,i}+o(1)= p_{3,i} \sum_{ \substack{d_1+d_2-2=\ell\\ 3\leq d_1,d_2 \leq k}} p_{d_1,i}p_{d_2,i}+o(1).$$
The $o(1)$ term accounts for the event that the $i^{th}$  Hyperaction  is of Type 2,3b or 4.
Thus,
\begin{align}
\E{Z_i|\G_i} &=    p_{3,i}\sum_{3\leq j_1,j_2 \leq k} p_{j_1,i}p_{j_2,i} \cdot(j_1+j_2-2-k)\mathbb{I}(j_1+j_2-2-k\geq 0)+o(1) \label{Z1}
\\& \leq (k-2)  p_{3,i} +o(1) \leq (k-2)  p_{3,i_1} +o(1). \label{Z}
\end{align}
At \eqref{Z} we used corollary \ref{new1}.
Now observe that 
\begin{align}\label{z2}
0\leq ex_{k,i_2} = ex_{k,i_1} + \sum_{i=i_1}^{i_2-1} (ex_{k,i+1}-ex_{k,i})\leq  ex_{k,i_1} + \sum_{i=i_1}^{i_2-1}\{ Z_i  -[1-\mathbb{I}(\Delta_i=k)]\}.
\end{align}
The $-[1-\mathbb{I}(\Delta_i=k)]$ accounts for the fact that when $\Delta_i>k$, due to the max-edge removal, $ex_{k,i}$ is decreased by 1.
In the event $\sum_{i=i_1}^{i_2-1} \mathbb{I}(\Delta(\G_i)=k) < (i_2-i_1)[0.999 - (k-2)p_{3,i_1}]$,
\eqref{z2} implies
\begin{align*}
(i_2-i_1)[0.999 - (k-2)p_{3,i_1}]&> \sum_{i=i_1}^{i_2-1} \mathbb{I}(\Delta_i=k)
\geq (i_2-i_1)-ex_{k,i_1}- \sum_{i=i_1}^{i_2-1}Z_i 
\\& \geq (i_2-i_1)-\log^2 n- \sum_{i=i_1}^{i_2-1}Z_i.
\end{align*}
Hence
\begin{align} \label{Z3}
\sum_{i=i_1}^{i_2-1}Z_i \geq (i_2-i_1)[0.001+(k-2)p_{3,i_1}]-\log^2 n.
\end{align}
Since $|Z_i| \leq k-2$ and\eqref{Z} and  \eqref{Z3} hold the Azuma-Hoeffding inequality implies
\begin{align*}
\pr\bigg(&\sum_{i=i_1}^{i_2-1} \mathbb{I}(\Delta(\G_i)=k) < (i_2-i_1)[0.999 - (k-2)p_{3,i_1} \bigg) 
\\ &\leq   2\exp\bigg\{\frac{[(0.001-o(1))(i_2-i_1)]^2}{2(k-2)^2(i_2-i_1)}  \bigg\}=o(n^{-0.5}).
\end{align*}
In the last equation we used that $n^{0.7}\leq i_2-i_1$.
\end{proof}
We now proceed to calculate the expected change of $ex_{\ell,i}$ in terms of $p_{3,i}$ and $p_{\ell+1}$. Later at Lemma \ref{general} we argue that as long as it is negative only good Hyperactions occur.
\begin{lem}\label{multbounds}
Let $4< k= O(1)$. 
Let $\Gamma_0=G$ be a random (multi)-graph with degree sequence $\bd$, maximum degree $k$, minimum degree 3 and no loops. Suppose that 
$\Gamma_i$ satisfies $ex_{k,i}\leq   \log^2 n$,
has maximum degree at least $k$ and $e_i \geq n^{0.9}$. 
Conditioned on the event that a good Hyperaction is applied to $\G_i$ for $ 3 \leq \ell \leq k$
\begin{align}\label{bddex}
|ex_{\ell,i+1}-ex_{\ell,i}|\leq \ell - 3+ \mathbb{I}(ex_{\ell,i}=0).
\end{align}
Moreover, either $ex_{\ell,i}=0$ or 
\begin{align}\label{elldrift}
\E{ex_{\ell,i+1}-ex_{\ell,i}| \G_i}\leq  -(1-p_{3,i})-p_{\ell+1,i} -p_{3,i}^3 +(\ell-3) p_{3,i} (1-p_{3,i})^2+n^{-0.75}.
\end{align}
\end{lem}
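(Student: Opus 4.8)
The plan is to track how a single good Hyperaction changes $ex_{\ell,i}$ by running through the same case analysis already set up in the proof of Lemma \ref{multbounds1}, but now bookkeeping the contribution to $ex_\ell$ rather than to $n_{r,i}$. Recall a good Hyperaction is of Type 1, 2, 3a, 3b, or 4; by Lemma \ref{lem31}/Lemma \ref{hyper} the Types 2, 3b, 4 occur with probability $o(n^{-0.75})$, and since $|ex_{\ell,i+1}-ex_{\ell,i}|$ is $O(1)$ on each such event (the involved degrees are all $\le \log^2 n + k$, and only $O(1)$ vertices change degree), their total contribution to the drift is absorbed into the $n^{-0.75}$ error term. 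So the real work is only for Type 1 and Type 3a.

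For the bound \eqref{bddex}: every Hyperaction begins with a max-edge removal at a vertex $v$ of degree $\Delta_i$. If $ex_{\ell,i}>0$ then $\Delta_i>\ell$ (since $ex_\ell$ counts excess over $\ell$), so removing one edge at $v$ decreases $ex_\ell$ by exactly $1$; the other endpoint $u$ either has degree $>\ell$, contributing $-1$ more, or degree $\le\ell$, contributing $0$. In a Type 3a Hyperaction the subsequent contraction of $\{v,x_1,x_2\}$ into $v_c$ of degree $d(x_1)+d(x_2)-2$ removes at most $d(x_1)+d(x_2)$ units of ``$ex_\ell$-mass'' from $x_1,x_2$ and re-adds at most $d(x_1)+d(x_2)-2-\ell$; net change is bounded in absolute value by a quantity at most $\ell-3$ when the pre-Hyperaction excess is positive (one should check the worst case, e.g. $x_1,x_2$ of degree $3$ produce $v_c$ of degree $4$, and when $\ell\ge 4$ this adds nothing). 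The extra $+\mathbb{I}(ex_{\ell,i}=0)$ handles the degenerate case $ex_{\ell,i}=0$: there the max-edge removal at $v$ with $d(v)=\Delta_i=\ell$ (the only possibility) drops $v$ to degree $\ell-1$ and so contributes $0$, but a subsequent contraction can \emph{create} a vertex of degree up to $\ell+2$ only when $\ell=\Delta_i$, wait — more carefully, when $ex_{\ell,i}=0$ one must allow the contraction to create one vertex of degree up to $2\ell-2$ and thus bump $ex_\ell$ up, and the extra indicator absorbs precisely this one-off possibility; I would verify the constant is right by the same explicit degree arithmetic.

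For the drift \eqref{elldrift}, assume $ex_{\ell,i}>0$, so $\Delta_i>\ell$ and the max-edge removal at $v$ always contributes $-1$ to $ex_\ell$ (from $v$ alone), plus $-\mathbb{I}(d(u)>\ell)$ from the other endpoint $u$; since $u$ is a uniformly random neighbour, $\pr(d(u)>\ell) \ge \pr(d(u)=\ell+1) = p_{\ell+1,i}$ (only a lower bound is needed because we want an upper bound on the drift and this term enters with a minus sign), giving a contribution at most $-(1-p_{3,i}) - p_{\ell+1,i}$ from the Type-1 part together with splitting off the probability $p_{3,i}$ that $d(u)=3$. Conditioned on $d(u)=3$ (a Type 3a event, up to $o(n^{-0.75})$), the contraction of $\{v,x_1,x_2\}$: when $d(x_1)=d(x_2)=3$ (probability $p_{3,i}^2$ given $d(u)=3$, so overall $p_{3,i}^3$), a degree-$4$ vertex $v_c$ is created which, when $\ell\ge 4$, adds $0$ to $ex_\ell$ but also \emph{removes} nothing, and the net over the whole Type-3a branch with all three auxiliary degrees equal to $3$ gives an additional $-p_{3,i}^3$ saving beyond the $-1$; in the complementary sub-case — $d(x_1)$ or $d(x_2)$ exceeds $3$, probability $\le (1-p_{3,i})^2$ given $d(u)=3$, with a further factor $p_{3,i}$ — the contraction can increase $ex_\ell$ by at most $\ell-3$, the $(\ell-3)p_{3,i}(1-p_{3,i})^2$ term. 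Collecting these with the $n^{-0.75}$ slack for the Type 2/3b/4 events and for the $o(1)$ loop-conditioning factors yields \eqref{elldrift}. The main obstacle, and the place I'd be most careful, is getting the worst-case constants in \eqref{bddex} exactly right — in particular pinning down which degree configurations realize the bound $\ell-3$ (respectively $\ell-2$ when $ex_{\ell,i}=0$) and checking that no Type 3a contraction with $ex_{\ell,i}>0$ can push $ex_\ell$ up by more than $\ell-3$ — and ensuring the ``$\pr(d(u)>\ell)\ge p_{\ell+1,i}$'' estimate is used in the correct (one-sided) direction so that the inequality in \eqref{elldrift} genuinely holds rather than just approximately.
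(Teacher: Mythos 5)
Your proposal follows essentially the same route as the paper: condition on the Hyperaction type, absorb Types 2/3b/4 into the $n^{-0.75}$ error term, and for Types 1 and 3a do the case analysis on $d(u)$ and on $d(x_1),d(x_2)$, which produces exactly the terms $-(1-p_{3,i})$, $-p_{\ell+1,i}$, $-p_{3,i}^3$ and $(\ell-3)p_{3,i}(1-p_{3,i})^2$. The worst case you hedged on is resolved in the paper precisely as you finally guessed: the contraction changes $ex_\ell$ by $[(j_1+j_2-2)-\ell]\mathbb{I}(j_1+j_2-2>\ell)-(j_1-\ell)\mathbb{I}(j_1>\ell)-(j_2-\ell)\mathbb{I}(j_2>\ell)\leq \ell-2$, while the max-edge removal contributes $-\mathbb{I}(d(v)>\ell)=-1+\mathbb{I}(ex_{\ell,i}=0)$, giving the bound $\ell-3+\mathbb{I}(ex_{\ell,i}=0)$.
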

\begin{proof}
As in Lemma \ref{multbounds1} we condition on the event that the $i^{th}$ Hyperaction is good. 
The case analysis is summarized at Table 3 given below. Fix $3\leq \ell \leq k$

If $d(u)>3$ then a Hyperaction of Type 1 occurs. A vertex of maximum degree $v$ is chosen along with a neighbor of it $u$. Then $\{v,u\}$ is removed. $ex_{\ell,i}$ is decreased by 1 for every vertex of degree larger than $\ell\geq 3$ in $\{v,u\}$.
$d(v)=\Delta_i$ and therefore $d(v)=\Delta_i>\ell$ if and only if $ex_{\ell,i}>0$. Based on $d(u)$ we consider the following cases:

$\bullet$ \textbf{Case 1:} $\ell+1\leq d(u)$. $d(v)\geq d(u)\geq \ell$ implies that if Case 1 occurs then $ex_{\ell,i+1}-ex_{\ell,i}=-2$.

Case 1 occurs only if $\ell <d(u)$ and hence with probability $\pr(d(u)\geq \ell+1)=p_{\geq \ell+1,i}$.

$\bullet$ \textbf{Case 2:} $3<d(u) \leq \ell$. If Case 2 occurs then $ex_{\ell,i+1}-ex_{\ell,i}=-\mathbb{I}(ex_{\ell,i}>0)$. Case 2 occurs only if $3<d(u) \leq \ell$ hence with probability $1-p_{3,i}-p_{\geq \ell+1,i}$ 

If $d(u)=3$ then a Hyperaction of Type 2,3a,3b or 4 occurs. Assume that a Hyperaction of Type 3a occurs, that is  $u$ has 3 neighbors in $\G_i$, let them be $\{v,x_1,x_2\}$ 
and there is no edge from $x_1$ to $x_2$.  In this case 
\textsc{Reduce} proceeds and contracts $\{v,x_1,x_2\}$. The new vertex, say $z$,  has degree $d(x_1)+d(x_2)-2$. For the change in $ex_\ell$,   $\ell \geq 3$  we consider 3 cases. In all 3 cases
the max-edge removal results in the decrease of $ex_{\ell,i}$ by the amount of $\mathbb{I}(d(v)>\ell)$.

$\bullet$ \textbf{Case 3:} $d(x_1)=d(x_2)=3$. The new vertex has degree degree 4. Thus it does not contribute to $ex_{\ell,i}$  and  $ex_{\ell,i+1}-ex_{\ell,i}=-\mathbb{I}(ex_{\ell,i}>0)$.
Case 3 occurs with probability $p_{3,i}^3$

$\bullet$ \textbf{Case 4:} $d(x_1)=3, d(x_2)\neq 3$.
The new vertex $v_c$ contributes to $ex_{\ell,i+1}$ by the amount of $d(v_c)-\ell=[d(x_2)+3-2]-\ell= [d(x_2)+1]-\ell$ only if 
$d(v_c)=d(x_2)+1> \ell$ i.e. only if $d(x_2)\geq \ell$. If $d(x_2)>\ell$ then $x_2$ contributes to $ex_{\ell,i}$ by the amount of $d(x_2) -\ell$. 
\begin{align*}
ex_{\ell,i+1}-ex_{\ell,i}&=-\mathbb{I}(d(v)>\ell) + [d(x_2)+1-\ell]\mathbb{I}(d(x_2)+1> \ell) - [d(x_2)-\ell]\mathbb{I}(d(x_2)> \ell) 
\\&=-1 \text{ or } 0.
\end{align*}
 Case 4 occurs  with probability at most $2p_{3,i}^2(1-p_{3,i})$. 
 
$\bullet$ \textbf{Case 5:} $d(x_1)=j_1>3, d(x_2)=j_2>3$.
$x_h$ contributes  to $ex_{\ell,i}$ by the amount of $(j_h-\ell)\bI(j_h>\ell)$ for $h=1,2$, 
while the new vertex has degree $j_1+j_2-2$ and contributes by the amount of 
$$[(j_1+j_2-2)-\ell]\bI[j_1+j_2-2>\ell].$$
Therefore,
\begin{align}
ex_{\ell,i+1}-ex_{\ell,i}& = -\mathbb{I}(d(v)>\ell)
+ [(j_1+j_2-2)-\ell]\bI[j_1+j_2-2>\ell] \label{type3a} 
\\ &-(j_1-\ell)\bI(j_1>\ell)-(j_2-\ell)\bI(j_2>\ell)] \nonumber
\\&\leq \ell-2-\bI(d(v)>\ell)=\ell-3+\bI(ex_{\ell,i}=0) . \label{type3} 
\end{align}
In the last line we used $d(v)=\Delta_i$ implies that $\Delta_i>\ell$ iff $ex_{\ell,i}>0$.
Therefore $ \mathbb{I}(d(v)>\ell)= \mathbb{I}(ex_{\ell,i}>\ell)= 1-\mathbb{I}(ex_{\ell,i}=0).$ The first equality follows from the observation that  
 \eqref{type3a} implies that 
$ex_{\ell,i+1}-ex_{\ell,i}\geq -1$. Also from 
\eqref{type3} we can conclude that if $ex_{\ell,i}>0$ then 
\begin{align}\label{type3b}
ex_{\ell,i+1}-ex_{\ell,i}\leq \ell-3.
\end{align}
Case 5 occurs with probability $p_{3,i}\cdot p_{j_1,i} \cdot p_{j_2,i}$ (recall $j_1=d(x_1)>3, j_2=d(x_2)>3$).

$\bullet$ \textbf{Case 6:}A Hyperaction of Type 3b occurs. Then  $\{u,x_1,x_2\}$ is contracted into $v_c$ that satisfies $3\leq d(v_c) = d(x_1)+d(x_2)-4$. The rest of the analysis is similar to Cases 3,4 and 5. 

$\bullet$ \textbf{Case 7:} A  Hyperaction of Type 4 occurs. That is the edge removal of $\{v,u\}$ is followed by the contraction of $\{u,x_1,x_2\}$ to $v_c$ and the contraction of $\{v_c,z_1,z_2\}$ to $v_c'$.
The  new vertex has degree at most $d(z_1)+d(z_2)-2$. $d(x_1)=d(x_2)=3$ thus $x_1,x_2$ do not contribute to $ex_{\ell,i}$. Hence the same analysis as in cases 3,4,5 (with $z_2,z_2$ in place of $x_1,x_2$) applies.

$\bullet$ \textbf{Case 8}) A Hyperaction of Type 2 occurs. First the degree of $v$ is reduced by 1
and then $\{v,u,z\}$ is contracted (here $z$ is the second neighbor of $u$). The new vertex has degree $d(v)+d(z)-3$. Hence the same analysis as in cases 3,4,5 (with $v,z$ in place of $x_1,x_2$, where $v$ had degree $d(v)-1$) applies.

We summarize the above case analysis in Table 3.  
\begin{table}[h]
\centering
    \begin{tabular}{| c | c | c | c | c |}
    \hline
  \textbf{Case} &${d(u)}$ & \textbf{Hyperaction  that} & $ex_{\ell,i+1}-ex_{\ell,i}$ & \textbf{probability} 
 \\ & &   \textbf{takes place}& &\textbf{occurring}
     \\ \hline
 Case 1 &  $ \ell+1 \leq d(v)$ & Type 1 & -2 & $p_{\geq \ell+1,i}$ \\ \hline
 Case 2 & $ 3< d(u) \leq \ell$ & Type 1 &$-\mathbb{I}(ex_{\ell,i}>0)$& $1-p_{3,i}-p_{\geq \ell+1}$ \\ \hline
 Case 3 & 3 & Type 3a &-1& $p_{3,i}^3$ \\ \hline
 Case 4 & 3 & Type 3a &-1 or 0&  $2p_{3,i}(1-p_{3,i})$ \\ \hline
 Case 5 & 3 & Type 3a & $\in [-1,u_\ell]$& $p_{3,i}\sum_{j_1,j_2>3}p_{j_1,i}p_{j_2,i}$ \\ \hline
Case 6/7/8 &  & Type 3b/4/2 & $\in [-1,u_\ell]$& $o(n^{-0.75})$ \\ \hline
     \end{tabular}
    \caption{Case analysis for $3\leq \ell \leq k$, $u_\ell=\ell-3+\mathbb{I}(ex_{\ell,i}=0)$}
\end{table}
\\Therefore \eqref{bddex} is satisfied. In addition if $ex_{\ell,i}>0$  then $-\mathbb{I}(ex_{\ell,i}>0)=-1$ and $u_\ell=\ell-3$. Thus,
\begin{align*}
\e(ex_{\ell,i+1}-ex_{\ell,i}| \G_i) &\leq - 2p_{\ell+1,i}- (1-p_{3,i}-p_{\ell+1,i})- 
p_{3,i}^3
\\&+(\ell-3)\sum_{j_1,j_2>3}p_{j_1,i}p_{j_2,i}+(\ell-3)o(n^{-0.75})
\\& \leq - 1-p_{\ell+1,i}- 
p_{3,i}^3+(\ell-3) p_{3,i}(1-p_{3,i})^2+n^{-0.75}.
\end{align*}
\end{proof}
In Lemma \ref{general} and Corolary \ref{stopF}, using \eqref{bddex} and \eqref{elldrift}, we show that $F_{k,t^*}$ occurs w.h.p.\@ for various stopping times $t^*$. Hence w.h.p.\@ for $i<t^*$ the $i^{th}$ Hyperaction is good.
\begin{lem}\label{general}
Let $\G_0=G$ be a random (multi)-graph with degree sequence $\bd$, $n$ vertices, minimum degree 3 and  no loops that satisfies $ex_{k,0}=0$. 
Let $t^*$ be a stopping time such that the inequalities 
$i\leq t^*$ and $0<ex_{k,i}\leq \log^2 n$ imply 
\begin{align}\label{condition}
\e(ex_{k,i+1}-ex_{k,i}|\G_i,Q_{k}(G))<C \hspace{10mm} \text{ and } \hspace{10mm} e_i \geq n^{0.9}
\end{align}
 for some constant $C<0$. Then w.h.p.\@  
the event $F_{k,t^*}(G)$ occurs.
\end{lem}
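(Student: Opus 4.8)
The plan is to condition on the event $Q_k(G)$, which by Lemma~\ref{hyper} has probability $1-o(n^{-0.5})$, and to argue that on $Q_k(G)$ the quantity $ex_{k,i}$ (which starts at $ex_{k,0}=0$) behaves like a random walk with strictly negative drift $<C$ and bounded one-step increments, so that it makes only short, low excursions away from $0$ before time $t^*$. Set $L:=\lceil\log^2 n/(k-2)\rceil$. I would first reduce to the \emph{excursion claim}: on $Q_k(G)$, with probability $1-o(n^{-0.5})$, for every $a<t^*$ with $ex_{k,a}=0$ there is some $j$ with $a<j\le\min(a+L,t^*)$ and $ex_{k,j}=0$. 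Granting the excursion claim, property (iii) of $F_{k,t^*}(G)$ is immediate. For property (i), note that whenever $ex_{k,i}\le\log^2 n$ during an excursion before time $t^*$ we have $\Delta_i>k\ge3$ (since $ex_{k,i}>0$) and $e_i\ge n^{0.9}$ (by \eqref{condition}), so $Q_k(G)$ forces the $i$-th Hyperaction to be good and \eqref{bddex} bounds its effect by $k-3+\mathbb{I}(ex_{k,i}=0)\le k-2$; starting from $ex_{k,a}=0$ and taking at most $L-1$ positive steps therefore keeps $ex_{k,i}<L(k-2)=\log^2 n$ --- the only subtle point being that this uses the increment bound whose hypothesis is what we are proving, and I address this circularity below. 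Property (ii) of $F_{k,t^*}(G)$ then follows from $Q_k(G)$ and (i).

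To prove the excursion claim I would use a negative-drift estimate together with the Azuma--Hoeffding inequality. Fix $a<t^*$ with $ex_{k,a}=0$ and set $\rho_a:=\min\{i>a:\ ex_{k,i}=0 \text{ or } ex_{k,i}>\log^2 n \text{ or } i=t^*\}$. On $a<i<\rho_a$ we have $0<ex_{k,i}\le\log^2 n$ and $i<t^*$, so \eqref{condition} gives $\E{ex_{k,i+1}-ex_{k,i}|\G_i}<C<0$ and \eqref{bddex} gives $|ex_{k,i+1}-ex_{k,i}|\le k-3$. On the event $\{\rho_a>a+L\}$ --- which forces $a+L<t^*$ and $ex_{k,a+L}\ge1$ --- the telescoping sum $\sum_{j=a}^{a+L-1}(ex_{k,j+1}-ex_{k,j})$ equals $ex_{k,a+L}\ge1$ while the sum of the corresponding one-step conditional means is at most $C(L-1)+(k-2)$, so the associated martingale deviates by $\Omega(\log^2 n)$; by Lemma~\ref{ah} (with bound $b=k$) this has probability $2\exp\{-\Omega(\log^2 n)\}$. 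Since $\tau\le kn/2$ by \eqref{ed1} there are $O(n)$ possible values of $a$, so a union bound gives $\rho_a\le a+L$ for all such $a$ with probability $1-o(n^{-0.5})$. Finally, applying the increment bound along $(a,\rho_a]$ with $\rho_a-a\le L$ shows $ex_k$ never exceeds $L(k-2)=\log^2 n$ there, so $\rho_a$ must be triggered by a return to $0$ or by reaching $t^*$; this is exactly the excursion claim.

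The main obstacle is the circularity flagged above: the drift bound \eqref{condition} and increment bound \eqref{bddex} are only available while $ex_{k,i}\le\log^2 n$, but keeping $ex_{k,i}\le\log^2 n$ is part of the conclusion. The stopping time $\rho_a$ is built precisely to break this --- up to $\rho_a$ everything is bounded by construction, so the Azuma estimate is legitimate; it controls $\rho_a-a$; and only afterwards does one use $\rho_a-a\le L$ with the deterministic increment bound to conclude that the ``$ex_k$ exceeded $\log^2 n$'' alternative in the definition of $\rho_a$ never actually occurs, which then also closes the argument for property (i). A secondary point to verify is that conditioning on $Q_k(G)$ (rather than on the single-step event that a good Hyperaction is applied to $\G_i$) does not disturb the martingale structure needed for Lemma~\ref{ah}; this is harmless because $Q_k(G)$ holds with probability $1-o(n^{-0.5})$ and, on it, the relevant Hyperactions are good, so the one-step conditional expectations are those of \eqref{elldrift}. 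The remaining estimates are routine.
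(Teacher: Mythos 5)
Your proposal is correct and takes essentially the same route as the paper: the paper also reduces the lemma to the non-existence of an excursion of $ex_k$ away from $0$ lasting more than $\log^2 n/(k-2)$ steps, uses the bounded increments of \eqref{bddex} (under $Q_k(G)$) to keep $ex_{k}$ below $\log^2 n$ throughout such a window so that the drift hypothesis \eqref{condition} applies, and then kills each long excursion with Azuma--Hoeffding plus a union bound absorbed into the $o(n^{-0.5})$. Your explicit stopping time $\rho_a$ is just a more carefully articulated version of the paper's ``first bad window starting at $j$'' decomposition, and your handling of the circularity and of the conditioning on $Q_k(G)$ matches what the paper does implicitly.
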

\begin{proof}
Recall $F_{k,t^*}(G)$ is the event that 
for every $0\leq i < t^*$ 
\begin{itemize}
\item[] i) $ex_{k,i} \leq  \log^2 n$, 
\vspace{-2mm}
\item[] ii) \textsc{Reduce} applies a good Hyperaction to $\G_i$
\vspace{-2mm}
\item[] iii) there exists $z$ satisfying  $i-\log^2 n/(k-2)\leq z \leq i$ and  $ex_{k,z}=0$.
\end{itemize}
$ex_{k,0}=0$, thus conditioned on $Q_k(G)$ occurring,  \eqref{bddex} implies that if $ F_{k,t^*}(G)$ does not occur then 
there exists $ j \leq t^*-\log^2 n/(k-2)$  such that:
\begin{itemize}
\item[] i) $0 < ex_{k,j} \leq  (k-2)$,
\vspace{-2mm}  
\item[] ii) $0<ex_{k,j+h }<\log^2n$ for $ 0\leq h \leq \log^2 n/(k-2)-1 $ and 
\vspace{-2mm}
\item[] iii)$ex_{k, j+\log^2 n/(k-2) }>0$.
\end{itemize}
Indeed conditioned on $Q_k(G)$ occurring, \eqref{bddex} implies that for every such pair $j,h$  we have $$ex_{k,j+h} \leq ex_{k,j} +h(k-2) \leq \log^2 n.$$
Thus the inequality $\e(ex_{k,i+1}-ex_{k,i}|\G_i, Q_k(G))<C$ holds for $i\leq j+ \log^2 n/(k-2)-1$. \eqref{ed1} implies  $t^*\leq \tau \leq kn/2$. Thus
 Azuma-Hoeffding inequality (see Lemma \ref{ah}) gives
\begin{align*}
\pr( F_{k,t^*}(G) \text{ does not occur })& 
\leq  2 
\exp\bigg\{ \frac{-[(k-2)-C\cdot (\frac{\log^2 n}{k-2}-1) ]^2}{ 2\cdot   \frac{\log^2 n}{k-2} \cdot (k-2)^2}  \bigg\}+\pr(\neg Q_k(G))\\&
=o(n^{-0.5}).
\end{align*}
\end{proof}
\begin{cor}\label{stopF}
 a) For $k\in \{5,6,7 \}$ let $\G_0=G$ be a random graph with degree sequence $\bd$, minimum degree 3, maximum degree $k$ and no loops. Then  w.h.p.\@ $F_{k,\tau_{k-1}}$ occurs.
\\ b) For $8\leq k $ let $\G_0=G \in \cC_{3,k}$ be a random graph with degree sequence $\bd$, minimum degree 3, maximum degree $k$ and no loops. Then w.h.p.\@  $F_{k,t_{k-1}}$ occurs.
\end{cor}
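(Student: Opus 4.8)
The plan is to obtain both parts from Lemma~\ref{general}, applied with the stopping time $t^{*}=\tau_{k-1}$ in part~(a) and $t^{*}=t_{k-1}$ in part~(b); in each case the conclusion $F_{k,t^{*}}(G)$ of that lemma is exactly the statement to be proved. Since $\G_0=G$ has maximum degree $k$ we have $ex_{k,0}=0$, so the only thing to check is the hypothesis \eqref{condition}: whenever $i\le t^{*}$ and $0<ex_{k,i}\le\log^{2}n$, one has $e_i\ge n^{0.9}$ and the one-step drift of $ex_{k,\cdot}$ at $\G_i$ is at most a fixed negative constant. The bound $e_i\ge n^{0.9}$ is automatic for $i<t^{*}$, since ``$e_i<n^{0.9}$'' is one of the stopping conditions in the definitions of both $\tau_{k-1}$ and $t_{k-1}$.

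For the drift, note that $ex_{k,i}>0$ forces a vertex of degree exceeding $k$, hence $\Delta(\G_i)\ge k+1$; together with $ex_{k,i}\le\log^{2}n$ and $e_i\ge n^{0.9}$ this puts $\G_i$ in the scope of Lemma~\ref{multbounds}. Taking $\ell=k$ in \eqref{elldrift} and discarding the non-positive term $-p_{k+1,i}$ gives
\[
\E{ex_{k,i+1}-ex_{k,i}\mid\G_i}\ \le\ g_k(p_{3,i})+n^{-0.75}
\qquad\text{with}\qquad
g_k(x):=-(1-x)-x^{3}+(k-3)\,x(1-x)^{2},
\]
on the event that a good Hyperaction is applied to $\G_i$. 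The conditioning in \eqref{condition} is instead on $Q_k(G)$; but on $Q_k(G)$ a good Hyperaction is applied to $\G_i$ (because, with $ex_{k,i}>0$ and $e_i\ge n^{0.9}$, the graph $\G_i$ meets the defining conditions of $Q_k(G)$), the increment $ex_{k,i+1}-ex_{k,i}$ is bounded as in \eqref{bddex} and depends only on $\G_i$ and the $i$-th Hyperaction, and $\neg Q_k(G)$ has conditional probability $o(1)$ given the history up to $\G_i$ (by the estimate in the proof of Lemma~\ref{hyper}); hence the extra conditioning alters the conditional drift by at most $o(1)$. So it is enough to bound $g_k(p_{3,i})$ above by a negative constant.

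In part~(a), $k\in\{5,6,7\}$, the function $g_k$ is a fixed cubic and $p_{3,i}\in[0,1]$ trivially, so it suffices that $\max_{x\in[0,1]}g_k(x)<0$. Writing $-(1-x)+(k-3)x(1-x)^{2}=-(1-x)\bigl[1-(k-3)x(1-x)\bigr]$, the bracket equals $2x^{2}-2x+1$, $3x^{2}-3x+1$ or $(2x-1)^{2}$ for $k=5,6,7$ respectively, each of which is $\ge 0$ on $[0,1]$; thus $g_k(x)=-(1-x)\bigl[1-(k-3)x(1-x)\bigr]-x^{3}$ is a sum of two non-positive terms on $[0,1]$ that cannot vanish at the same point (the second forces $x=0$, where the first equals $-1$), so $\max_{x\in[0,1]}g_k=-c_k<0$. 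Then \eqref{condition} holds with $C=-c_k/2$ for $n$ large and Lemma~\ref{general} proves part~(a). In part~(b), $k\ge 8$ and $\G_0\in\cC_{3,k}$, so $\G_i\in\cC_{3,k-1}$ for every $i<t_{k-1}$ by the definition of $t_{k-1}$; Lemma~\ref{remA} then gives $p_{3,i}\le 3\big/\sum_{j=3}^{k-1}j\,\ca^{j-3}+o(1)$, and since $(1-p_{3,i})^{2}\le1$,
\[
g_k(p_{3,i})\ \le\ -(1-p_{3,i})+(k-3)\,p_{3,i}\ \le\ -\bigl(1-0.081\bigr)+\frac{3(k-3)}{\sum_{j=3}^{k-1}j\,\ca^{j-3}}+o(1).
\]
A short computation shows the map $k\mapsto 3(k-3)\big/\sum_{j=3}^{k-1}j\,\ca^{j-3}$ is decreasing for $k\ge 8$ (the denominator grows geometrically, the numerator only linearly), hence is at most its value at $k=8$, namely $3\cdot 5\big/\sum_{j=3}^{7}j\,\ca^{j-3}<0.41$. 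Therefore $g_k(p_{3,i})\le -0.919+0.41+o(1)<-\tfrac14$ for $n$ large, \eqref{condition} holds with $C=-\tfrac14$, and Lemma~\ref{general} gives $F_{k,t_{k-1}}(G)$ w.h.p.

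The step I expect to be the main obstacle is the uniform-in-$k$ estimate in part~(b): one must verify that the linear loss $k-3$ incurred by a Type-3a contraction is outweighed by the geometric quantity $\sum_{j=3}^{k-1}j\,\ca^{j-3}$ furnished by $(\ca,3,k-1)$-dominance, so that $(k-3)p_{3,i}$ stays below $1-p_{3,i}\ge 0.919$ for every $k\ge 8$ simultaneously --- this is precisely where the value $\ca=1.17$ is being spent. A secondary, bookkeeping-level point is the passage from the ``a good Hyperaction is applied'' conditioning under which \eqref{elldrift} was established to the ``$Q_k(G)$'' conditioning demanded by \eqref{condition}, for which one argues that the two conditional drifts differ by $o(1)$.
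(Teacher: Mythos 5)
Your proposal is correct and follows essentially the same route as the paper: both reduce the corollary to verifying hypothesis \eqref{condition} of Lemma \ref{general}, bounding the drift via \eqref{elldrift} with $\ell=k$ and using $p_{3,i}\in[0,1]$ for $k\in\{5,6,7\}$ and Lemma \ref{remA} for $k\ge 8$. The only differences are cosmetic: where the paper reports numerical maxima of the drift bound, you establish negativity by an algebraic factorization in part (a) and a monotonicity-in-$k$ estimate in part (b), and you spell out the conditioning on $Q_k(G)$ and the check $ex_{k,0}=0$ that the paper leaves implicit.
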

\begin{proof}
It suffices to show that \eqref{condition} is satisfied. By setting $\ell=k$, \eqref{elldrift} implies
\begin{align}\label{numer1}
 \E{ex_{k,i+1}-ex_{k,i}| \G_i, Q_{k}(G)} 
\leq  -(1-p_{3,i}) -p_{3,i}^3 +(k-3) p_{3,i} (1-p_{3,i})^2+n^{-0.75}.
\end{align}
a) Maximizing \eqref{numer1} over $k\in \{5,6,7\}$ and $p_{3,i}\in [0,1]$ yields a maximum of $-0.08791$, attained at 
$k=7,p_{3,i}=0.40457$. 
\vspace{3mm}
\\b) $i<t_{k-1}$ implies that Lemma \ref{remA} is applicable and
hence that $p_{3,i} \leq 3/\sum_{j=3}^{k-1}\ca^{j-3}j +o(1)\leq 1/(k-2)+o(1)$. 
Maximizing \eqref{numer1} over $8\leq k$ and $p_{3,i} \leq 1/(k-2)$
yields a maximum  of $ -[(k-3)(k-2)^2-(k-3)^3+1]/(k-2)^3 +n^{-0.75}$ attained at $p_{3,i}=-1/(k-2)$. 
\end{proof}
\section{Proof of Lemma \ref{ind}}\label{sind}
We split the proof of Lemma \ref{ind} into 3 Lemmas.
The first one, Lemma \ref{halfb} is a slightly stronger version of Lemma \ref{propb}. It implies that for determining $t_{k-1}$ out of the  events $\cD_{k-1,j,i}$, $4\leq j \leq k-1$, $i\leq \tau $ it suffices to consider only the events $\cD_{k-1,k-1,i}$, $i\leq \tau$. Observe that $i<t_{k-1}$ implies that $n_{r,i}-\ca n_{r-i,i}>-[\log^2 n -(k-1)]n^{0.8}/2^r$ for $4\leq r \leq k-2$
The proof of Lemma \ref{halfb} is based on the fact that if $n_{r,i}-\ca n_{r-1,i}$ is 
close to $[\log^2 n -(k-1)]n^{0.8}/2^r$ then after the $i^{th}$ Hyperaction it will increase in expectation for $i<t_{k-1}$.
We then let $$t^*_{k-1}=\min\{t_{k-1}, \tau_{k-1}\}.$$
In Lemma \ref{halfb2}, using similar arguments as in Lemma \ref{halfb}, we show that
$\cD_{k-1,k-1,i}$ occurs for $i\leq t^*$. Lemmas \ref{halfb} and \ref{halfb2} together with part (b) of Corollary \ref{stopF} imply parts (i) and (ii) of Lemma \ref{ind}.

To prove part (ii), done in Lemma \ref{time} we first argue that $t^*_{k-1}< 1.5n_{k,0}+n^{0.6}.$ Then we use \eqref{edgedrift} to bound $e_{i+1}-e_i$ in terms of $p_{3,i}$. An upper bound on $p_{3,i}$ is provided by Lemma \ref{remA}.

Recall $\ca=1.17$.

\begin{lem}\label{halfb}
Let $8\leq k=O(1)$. Let $\G_0=G\in \cC_{3,k}\subseteq \cC_{3,k-1}$ be a random (multi)-graph with degree sequence $\bd$, maximum degree $k$, minimum degree 3 and no loops.
Then w.h.p.\@  for $4\leq r \leq k-2$  then event $\cD_{k-1,r, t_{k-1}}$ holds.
\end{lem}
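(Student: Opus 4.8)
The plan is to run the standard supermartingale / drift argument on the quantities $X_{r,i} := n_{r,i} - \ca n_{r-1,i}$ for each fixed $r$ with $4 \le r \le k-2$, exactly as indicated in the ``Overview'' subsection. We want to show that w.h.p.\@ $X_{r,t_{k-1}} \ge -[\log^2 n - (k-1)]n^{0.8}/2^r$, i.e.\@ $\cD_{k-1,r,t_{k-1}}$ holds. First I would record that by part (b) of Corollary \ref{stopF}, w.h.p.\@ the event $F_{k,t_{k-1}}$ occurs, so for $i < t_{k-1}$ every Hyperaction is good and $ex_{k,i}\le\log^2 n$; this licenses the use of Lemmas \ref{multbounds1} and \ref{multbounds1}'s formulas \eqref{ev} (for $r \le k-2$) and the boundedness estimates \eqref{bddv}, and also ensures $e_i \ge n^{0.9}$ throughout, so $p_{j,i}$ is well-behaved. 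I would also note the trivial boundary fact: since $G = \G_0 \in \cC_{3,k} \subseteq \cC_{3,k-1}$, we start with $X_{r,0} \ge -[\log^2 n - k]n^{0.8}/2^r > -[\log^2 n-(k-1)]n^{0.8}/2^r$, so the event we want holds at time $0$ with room to spare.

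The core step is the drift computation. For $i < t_{k-1}$ we have $\G_i \in \cC_{3,k-1}$, hence Lemma \ref{remA} applies and $p_{3,i} \le 0.081$. The claim to establish is: there is an absolute constant $c>0$ such that whenever $i < t_{k-1}$ and $X_{r,i}$ lies in the ``dangerous band'' (say within $O(\log^2 n)$ of the threshold $-[\log^2 n - (k-1)]n^{0.8}/2^r$, equivalently $n_{r,i}$ is small relative to $\ca n_{r-1,i}$), one has $\E{X_{r,i+1} - X_{r,i} \mid \G_i} \ge c\, n^{-0.8}$ or at least a strictly positive quantity bounded away from $0$. Using \eqref{ev} for indices $r$ and $r-1$,
\begin{align*}
\E{X_{r,i+1}-X_{r,i}\mid\G_i} &= \big(p_{r+1,i}-p_{r,i}\big) - \ca\big(p_{r,i}-p_{r-1,i}\big) \\
&\quad + p_{3,i}\Big[\textstyle\sum_{j_1+j_2-2=r}p_{j_1,i}p_{j_2,i} - 2p_{r,i} - \ca\sum_{j_1+j_2-2=r-1}p_{j_1,i}p_{j_2,i} + 2\ca\, p_{r-1,i}\Big] + o(n^{-0.75}).
\end{align*}
In the dangerous band, $n_{r,i}$ is tiny (of order $n^{0.8}$) while $n_{r-1,i} = \Omega(n)$ is forced by $\G_i \in \cC_{3,k-1}$ applied at level $r-1$ together with the degree sequence being supported down to degree $3$; hence $p_{r,i} = o(1)$, $p_{r-1,i} = \Omega(1)$, and the dominant term is $+\ca\, p_{r-1,i} \cdot (\text{positive})$ coming from $-\ca(p_{r,i}-p_{r-1,i}) \approx \ca\, p_{r-1,i} > 0$, while the negative contributions ($-p_{r,i}$, the $p_{3,i}$-weighted terms, bounded by $0.081 \cdot \text{const}$) are all either $o(1)$ or dominated because $p_{3,i}\le 0.081$ is small. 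So the conditional drift is bounded below by a positive constant. I would spell out the inequality $\ca\, p_{r-1,i} - p_{r,i} - O(p_{3,i}) > 0$ using $p_{3,i}\le 0.081$ and $\ca = 1.17$; this is the ``not valid for smaller $k$'' arithmetic that forces $k \ge 8$.

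Given the one-step positive drift in the dangerous band, I would close with an exit-time / Azuma-Hoeffding argument of the same shape as Lemma \ref{general}: define the stopping time at which $X_{r,i}$ first drops below the threshold; if it occurs before $t_{k-1}$, then by the bounded-increment estimate \eqref{bddv} (so $|X_{r,i+1}-X_{r,i}| \le 5(1+\ca)$) the walk must have traversed the dangerous band of width $\Theta(\log^2 n \cdot n^{0.8}/2^r)$ downward over a stretch of Hyperactions during each of which the drift was $\ge c$, which has probability $\exp\{-\Omega(\log^2 n \cdot n^{0.8})\} = o(n^{-0.5})$ by Azuma-Hoeffding (using $\tau \le kn/2$ from \eqref{ed1} for the union bound over start times and the martingale hypothesis of Lemma \ref{ah}, which holds since the drift depends only on $\G_i$). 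Taking a union bound over the $O(1)$ values of $r$ finishes the proof. The main obstacle I anticipate is not the probabilistic machinery — that is routine — but verifying cleanly that in the dangerous band $p_{r-1,i}$ is genuinely bounded below (so that the positive term survives): this needs the lower bound on $n_{r-1,i}$, which itself comes from chaining the $\cC_{3,k-1}$ inequalities down to degree $3$ and using $\sum_j j n_{j,i} = 2e_i = \Omega(n)$, together with controlling that $n_{r-1,i}$ itself is not simultaneously in its own dangerous band (handled because the lemma is proved for all $r$ at once, or by an induction on $r$ from $k-2$ downward).
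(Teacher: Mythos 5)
Your overall architecture matches the paper: fix $r$, set $X_{r,i}=n_{r,i}-\ca n_{r-1,i}$, condition on $F_{k,t_{k-1}}$ via Corollary \ref{stopF}(b), establish a positive one-step drift for $X_{r,i}$ in a band just above the threshold $-[\log^2 n-(k-1)]n^{0.8}/2^r$, and close with Azuma--Hoeffding over a window of length $\Theta(n^{0.8}/2^r)$ preceding $t_{k-1}$ using the bounded increments from \eqref{bddv}. However, the heart of your argument --- the drift computation --- rests on a false reading of the dangerous band. Being near the threshold means $n_{r,i}-\ca n_{r-1,i}\approx -\log^2 n\cdot n^{0.8}/2^r$, i.e.\ $n_{r,i}\approx \ca\, n_{r-1,i}$ up to an additive $O(n^{0.8}\log^2 n)$ correction; it does \emph{not} make $n_{r,i}$ tiny. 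If $n_{r-1,i}=\Omega(n)$ then $n_{r,i}=\Omega(n)$ as well, and in fact $p_{r,i}\approx \frac{r}{r-1}\ca\, p_{r-1,i}>p_{r-1,i}$, so your claimed dominant positive term $-\ca(p_{r,i}-p_{r-1,i})\approx \ca\, p_{r-1,i}$ is actually \emph{negative}, and $-p_{r,i}$ is not $o(1)$ either. As written, the drift you compute does not have the sign you need.

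The paper's positivity comes from a different mechanism that your proposal never invokes: since $i<t_{k-1}$, the level-$(r+1)$ dominance $\cD_{k-1,r+1,i}$ gives $n_{r+1,i}\geq \ca n_{r,i}-[\log^2 n-(k-1)]n^{0.8}/2^{r+1}$, which lower-bounds $p_{r+1,i}$ in terms of $n_{r,i}$; combined with the band condition rewritten as $n_{r-1,i}\geq (n_{r,i}+[\log^2 n-k]n^{0.8}/2^r)/\ca$, the coefficient of $n_{r,i}/2e_i$ in the drift telescopes to $(r+1)\ca-(1+\ca+2p_{3,i})r+(1+2p_{3,i})(r-1)=\ca-1-2p_{3,i}\geq 1.17-1-0.162>0$, which is precisely where Lemma \ref{remA}'s bound $p_{3,i}\leq 0.081$ (hence $k\geq 8$) enters. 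Without the $\cD_{k-1,r+1,i}$ input the drift can genuinely be negative (e.g.\ when $p_{r+1,i}$ is small), so this is a missing idea, not a presentational gap. There is also a quantitative problem at the end: when $n_{r,i}$ itself is small the surviving drift is only of order $n^{0.8}\log n/e_i\asymp n^{-0.2}$ (coming from the gap between the $\log^2 n-k$ and $\log^2 n-(k-1)$ thresholds at consecutive levels), and your fallback bound ``$c\,n^{-0.8}$'' would be useless for Azuma over a window of length $\Theta(n^{0.8})$, since drift times window length must dominate the $n^{0.4}$-scale fluctuations; the paper's $n^{-0.2}$ bound yields total expected movement $n^{0.6}$, which does suffice.
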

\begin{proof}
Fix $r$, $4\leq r \leq k-2$.
We condition on the event  $F_{k,t_{k-1}}$ occurring.
Corollary \ref{stopF}   states that it  occurs w.h.p.\@. 
Hence for every $0\leq i < t_{k-1}$ the $i^{th}$ Hyperaction is good. Also  recall $\G_i \in \cC_{3,k-1}\supseteq \cC_{3,k}$ and $e_i\geq n^{0.9}$  for $i<t_{k-1}$. 
\vspace{3mm}
\\For $4\leq r\leq k-2$ if the event $\neg \cD_{k-1,r,t_{k-1}}$ occurs then
$$n_{r,t_{k-1}}-\ca n_{r-1,t_{k-1}} < - [\log^2 n-(k-1)]n^{0.8}/2^r$$
and
\begin{align}\label{51}
n_{r+1,i}-\ca n_{r,i} \geq  - [\log^2 n-(k-1)]n^{0.8}/2^{r+1} \text{ for } i < t_{k-1}.
\end{align}
For $i< t_{k-1}$ let $X_{r,i}= n_{r,i}-\ca n_{r,i}$.
\eqref{bddv} implies
\begin{align}\label{52}
|X_{r,i+1}-X_{r,i}| \leq 12
\end{align}
\eqref{52},  $\G_0 \in \cC_{3,k}$ and $\G_i \in \cC_{3,k-1}$ for $i<t_{k-1}$ imply that  for $t_{k-1}- n^{0.8}/(12\cdot 2^{r}) \leq i\leq t_{k-1}-1$ we have
\begin{align*}
 n_{r,i}-\ca n_{r-1,i} &\leq n_{r,t_{k-1}}-\ca n_{r-1,t_{k-1}}+12(t_{k-1}-i) 
\\& \leq  -\frac{ [\log^2 n-(k-1)]n^{0.8}}{2^r} +12 \cdot  \frac{n^{0.8}}{12\cdot 2^{r}}  
\leq   -\frac{[\log^2 n-k]n^{0.8}}{2^r}.
\end{align*}
Thus
\begin{align}\label{53}
n_{r-1,i} \geq \bigg(n_{r,i}+\frac{[\log^2 n-k]n^{0.8}}{2^r}\bigg) \bigg/ \ca. 
\end{align}
Using  \eqref{ev}, the following holds:
\begin{align}
\e(&X_{r,i+1}-X_{r,i}|\G_i) = \e(n_{r,i+1}-n_{r,i}|\G_i) - \ca \e(n_{r-1,i+1}-n_{r-1,i}|\G_i)  \nonumber
\\& =p_{r+1,i}-p_{r,i} 
 +p_{3,i}\bigg[\sum_{j_1+j_2-2=r} p_{j_1,i}p_{j_2,i}       
-2p_{r,i} \bigg]\nonumber 
\\&- \ca p_{r,i}+ \ca p_{r-1,i} 
 -\ca p_{3,i}\bigg[\sum_{j_1+j_2-2=r-1} p_{j_1,i}p_{j_2,i}       
-2p_{r-1,i} \bigg] 
 -o(n^{-0.75}) \nonumber
\\& \geq p_{r+1,i}-(1+\ca +2p_{3,i})p_{r,i} +(\ca+2\ca p_{3,1})p_{r-1,i}  
 -o(n^{-0.75}) -o(p_{r,i})-n^{-0.25} \label{111}
 \\& =  \frac{(r+1)n_{r+1,i}}{2e_i}- \frac{(1+\ca +2p_{3,i})rn_{r,i}}{2e_i} 
\nonumber 
 \\&+ \frac{(\ca+2\ca p_{3,1})(r-1)n_{r-1,i}}{2e_i} 
  -o(p_{r,i} ) -n^{-0.25} \label{eq2}
 \end{align}
 To derive \eqref{111} we used  
\begin{align*}
p_{3,i}&\sum_{j_1+j_2-2=r} p_{j_1,i}p_{j_2,i}       
 -\ca p_{3,i} \sum_{j_1+j_2-2=r-1} p_{j_1,i}p_{j_2,i}       
\\ &\geq  p_{3,i}\sum_{j_1+j_2-2=r} p_{j_1,i}p_{j_2,i}       
 -\ca p_{3,i} \sum_{j_1+j_2-2=r-1} p_{j_1,i}(p_{j_2+1,i}+n^{-0.09})/\ca   
 \\ &\geq - p_{3,i} \sum_{3\leq j_1 \leq r} p_{j_1,i}n^{-0.09}
 \geq - (p_{r,i} +n^{-0.09}) \sum_{3\leq j_1 \leq r} (p_{r,i} +n^{-0.09}) n^{-0.09}
 \geq -o(p_{r,i} ) - n^{-0.25}.
\end{align*}
In the second and third line of the above calculations we used that for $3\leq b_2 \leq b_1 \leq k-1$, $\G_i \in \cC_{3,k-1}$  implies
\begin{align*} 
p_{b_1,i}&=\frac{b_1n_{b_1,i}}{2e_i} \geq \frac{\ca^{b_1-b_2}b_1n_{b_2,i}-O(n^{0.8}\log^2n)}{2e_i} 
\\& \geq \frac{\ca^{b_1-b_2}b_1 b_2 n_{b_2,i}}{2b_2e_i}- O\bfrac{n^{0.8}\log^2n}{2n^{0.9}} \geq \ca^{b_1-b_2}p_{b_2,i}-n^{-0.09}.
\end{align*}
Using \eqref{51} and \eqref{53} in order to upper bound $n_{k+1,i}$ and $n_{k-1,i}$ respectively by terms involving only $n_{r,i}$, \eqref{eq2} implies
\begin{align} 
 \e(X_{r,i+1}-X_{r,i}|\G_i) 
& \geq   \frac{(r+1)[\ca n_{r,i}-[\log^2 n-(k-1)]n^{0.8}/2^{r+1}]}{2e_i}- \frac{(1+\ca +2p_{3,i})rn_{r,i}}{2e_i} \label{112}
\\&+ \frac{(\ca+2\ca p_{3,1})(r-1)[n_{r,i} + [\log^2 n-k]n^{0.8}/2^r  ]}{\ca \cdot 2e_i}
-o(p_{r,i} ) -n^{-0.25}
\nonumber
\\& =  \frac{(\ca-1 -2p_{3,i})n_{r,i}}{2e_i}  -o(p_{r,i} ) -n^{-0.25}  \nonumber
\\& + \frac{[2(r-1)(1+2p_{3,i})(\log^2 n-k) - (r+1)(\log^2 n-(k-1)) ]n^{0.8}}{2^{r+2}e_i} \nonumber
\\& \geq  \frac{(\ca -1  -2\cdot 0.081)n_{r,i}}{2e_i} -o(p_{r,i})
+ \frac{n^{0.8}\log n}{e_i} \label{113}
\\& \geq  \frac{0.005p_{r,i}}{r} 
+ \frac{n^{0.8}\log n}{e_i} -o(p_{r,i} )  \geq n^{-0.2}.
\label{114}
\end{align}
In \eqref{113} we used Lemma \ref{remA} i.e. $p_{3,i} \leq 0.081$.
In addition to \eqref{114}, if the event $\neg \cD_{k-1,r,t_{k-1}}$ occurs then
$ X_{r,t_{k-1}}< - [\log^2 n-(k-1)]n^{0.8}/2^r \leq X_{r,t_{k-1} -n^{0.8}{/11 \cdot 2^r}} $ and hence
\begin{align}\label{115}
 X_{r,t_{k-1}}- X_{r,t_{k-1} -n^{0.8}{/11 \cdot 2^r}}  = \sum_{j=t_{k-1} -n^{0.8}{/11 \cdot 2^r}}^{t_{k-1}-1}(X_{r,j+1}-X_{r,j})\leq 0.
 \end{align}
Using \eqref{52}, \eqref{114} and \eqref{115}, the
 Azuma-Hoeffding inequality gives us,
\begin{align*}
\pr( \neg \cD_{k-1,j, t_{k-1}}& \text{ for some } 3\leq j \leq r-2 )
\\& \leq \sum_{r=4}^{k-2} 
\pr\bigg(\sum_{j=t_{k-1} -n^{0.8}/12\cdot2^r}^{t_{k-1}-1}  X_{r,j+1}-X_{r,j} \leq0\bigg| F_{k,t_{k-1}} \bigg)+o(\neg \pr(F_{k,t_{k-1}}))
\\ &\leq  2   \exp \bigg\{-\frac{ 
(n^{-0.2} \cdot n^{0.8}/12\cdot 2^r)^2}{2\cdot 12^2 \cdot  n^{0.8}/12\cdot 2^r} \bigg\}  +o(n^{-0.5}) =o(n^{-0.5}).
\end{align*}
\end{proof}
Similar techniques, as the ones used in the proof of Lemma \ref{halfb}, are used in the proof of the  following Lemma. 
\begin{lem}\label{halfb2}
Let $8\leq k=O(1)$. Then w.h.p.
$$t^*_{k-1}=\tau_{k-1}<t_{k-1}.$$
\end{lem}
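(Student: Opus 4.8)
The plan is to show two things at once: that the event $\cD_{k-1,k-1,i}$ (equivalently $X_{k-1,i}=n_{k-1,i}-\ca n_{k-2,i}>-[\log^2 n-(k-1)]n^{0.8}/2^{k-1}$) persists for all $i\le t^*_{k-1}$, and that $\tau_{k-1}$ is reached strictly before $t_{k-1}$. Note that by Lemma \ref{halfb} and Corollary \ref{stopF}(b) we already know that w.h.p.\ $F_{k,t_{k-1}}$ occurs, so for $i<t_{k-1}$ every Hyperaction is good, $\G_i\in\cC_{3,k-1}$, $e_i\ge n^{0.9}$, and $ex_{k,i}\le\log^2 n$; moreover $\cD_{k-1,r,i}$ holds for all $4\le r\le k-2$. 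Combined with Lemma \ref{remA} this gives $p_{3,i}\le 0.081$ throughout $i<t_{k-1}$. So it remains only to control the ``top'' coordinate $r=k-1$, whose drift carries the extra $\mathbb{I}(\Delta_i=k)$ term from \eqref{ev2}.

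First I would run the same Azuma-Hoeffding argument as in the proof of Lemma \ref{halfb}, but now for $X_{k-1,i}$ on the interval $i\le t^*_{k-1}=\min\{t_{k-1},\tau_{k-1}\}$. By \eqref{bddv} we have $|X_{k-1,i+1}-X_{k-1,i}|\le 12$. If $\neg\cD_{k-1,k-1,t^*_{k-1}}$ occurred, then $X_{k-1,t^*_{k-1}}<-[\log^2 n-(k-1)]n^{0.8}/2^{k-1}$, while $\G_0\in\cC_{3,k}$ forces $X_{k-1,0}\ge -[\log^2 n-k]n^{0.8}/2^{k-1}$; so on a window of length $\Theta(n^{0.8}/2^{k-1})$ ending at $t^*_{k-1}$ the values $n_{k-1,i}-\ca n_{k-2,i}$ stay at most $-[\log^2 n-k]n^{0.8}/2^{k-1}$, giving $n_{k-2,i}\ge (n_{k-1,i}+[\log^2 n-k]n^{0.8}/2^{k-1})/\ca$ as in \eqref{53}. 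Using \eqref{ev2} and $\mathbb{I}(\Delta_i=k)\ge 0$, together with $p_{k,i}\ge 0$ and the bound $p_{3,i}\le 0.081$, the drift computation is exactly parallel to \eqref{112}--\eqref{114}: the coefficient $(\ca-1-2p_{3,i})>0$ dominates, the $n^{0.8}\log n/e_i$ term is positive, and we get $\e(X_{k-1,i+1}-X_{k-1,i}\mid\G_i)\ge n^{-0.2}$ on that window. Azuma-Hoeffding then contradicts \eqref{115}, so w.h.p.\ $\cD_{k-1,k-1,i}$ holds for all $i\le t^*_{k-1}$; hence (invoking Lemma \ref{halfb} for the other coordinates) $\G_i\in\cC_{3,k-1}$ for $i\le t^*_{k-1}$, which means $t_{k-1}\ge t^*_{k-1}$, i.e.\ $t^*_{k-1}=\tau_{k-1}$.

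Next I would upgrade the non-strict inequality $\tau_{k-1}\le t_{k-1}$ to strict. This is where Lemma \ref{propb} enters: at the first graph leaving $\cC_{3,k-1}$ only $\cD_{k-1,k-1}$ is violated, and that graph still lies in $\cC_{3,k-2}$. Since we have just shown $\cD_{k-1,k-1,i}$ holds up to and including $i=\tau_{k-1}$, the graph $\G_{\tau_{k-1}}$ — which has maximum degree $k-1$, so $n_{k-1,i}=n_k(\G_{\tau_{k-1}})$-analogue vanishes for degrees $>k-1$ — does not violate $\cD_{k-1,k-1}$, and in fact once $\Delta=k-1$ the defining inequalities of $\cC_{3,k-1}$ for $\G_{\tau_{k-1}}$ coincide (for $j\le k-1$) with those already verified. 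Thus $t_{k-1}>\tau_{k-1}$ cannot fail on account of reaching $\cC_{3,k-1}^c$ at time $\tau_{k-1}$; and $e_{\tau_{k-1}}\ge n^{0.9}$ is the content (w.h.p.) of part (iii) of Lemma \ref{ind}, proved separately via \eqref{edgedrift} and Lemma \ref{remA}. Putting these together yields $t^*_{k-1}=\tau_{k-1}<t_{k-1}$ w.h.p.

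The main obstacle is the extra $\mathbb{I}(\Delta_i=k)$ term in the drift \eqref{ev2}: it is nonnegative, which is harmless for proving a \emph{lower} bound on the drift of $X_{k-1,i}$ (so the Azuma step goes through smoothly), but one must be careful that nothing in the argument secretly needs an \emph{upper} bound on $X_{k-1,i}$ or on $n_{k-1,i}$ — here the only upper bounds needed are on $n_k(\G_i)$ (via \eqref{51} with $r=k-1$, using the already-established $\cD_{k-1,k-2,i}$, or trivially via $ex_{k,i}\le\log^2 n$ when $\Delta_i=k$) and on $n_{k-1,i}$ itself, both of which are available. A secondary subtlety is bookkeeping the conditioning: all estimates are conditional on $F_{k,t_{k-1}}$, whose failure probability $o(n^{-0.5})$ is absorbed at the end, exactly as in Lemma \ref{halfb}.
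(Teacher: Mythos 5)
There is a genuine gap, and it sits exactly where you wave it away: you claim that discarding the $\mathbb{I}(\Delta_i=k)$ term (and $p_{k,i}\ge 0$) is ``harmless'' and that the drift computation for $X_{k-1,i}$ is ``exactly parallel to \eqref{112}--\eqref{114}'' with the coefficient $(\ca-1-2p_{3,i})>0$ dominating. That parallel fails at $r=k-1$. In the $r\le k-2$ computation the coefficient of $n_{r,i}/2e_i$ comes out to $(r+1)\ca-(1+\ca+2p_{3,i})r+(1+2p_{3,i})(r-1)=\ca-1-2p_{3,i}$ only because the positive term $(r+1)n_{r+1,i}/2e_i$ is bounded below using $n_{r+1,i}\ge \ca n_{r,i}-O(n^{0.8}\log^2 n)$, i.e.\ $p_{r+1,i}\gtrsim \ca p_{r,i}$. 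At $r=k-1$ the analogous term is $p_{k,i}$, and on the event $F_{k,\cdot}$ one has $n_{k,i}\le ex_{k,i}+O(\log^2 n)=O(\log^2 n)$, so $p_{k,i}=o(1)$: there is no $\ca p_{k-1,i}$ coming from above. If you drop both $p_{k,i}$ and $\mathbb{I}(\Delta_i=k)$, the surviving coefficient of $n_{k-1,i}/2e_i$ is $-(1+\ca+2p_{3,i})(k-1)+(1+2p_{3,i})(k-2)=-1-\ca(k-1)-2p_{3,i}$, which is strongly negative (about $-9.2$ for $k=8$), so the drift is of order $-\Theta(p_{k-1,i})$, not $\ge n^{-0.2}$, and the Azuma step collapses.

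The paper's proof has to work considerably harder precisely here: it keeps the indicator, proves a separate concentration statement (Lemma \ref{new2}) showing that over the window $\sum_i\mathbb{I}(\Delta_i=k)\ge (i_2-i_1)[0.999-(k-2)p_{3,i_1}]$ (this itself uses the $ex_k$-bookkeeping of $F_{k-1,i_2}$ and Azuma), converts the negative part of the drift into $-2.1\,p_{k-2,i}$ using the assumed violation $n_{k-1,i}\le \ca n_{k-2,i}$, and then needs the additional bound $p_{k-2,t_\ell}\le 1/(1+\ca)+o(1)\le 0.47$ (derived from $\ca p_{k-2}\le p_{k-1}+o(1)$ and $\sum_j p_j=1$) to conclude that the \emph{time-averaged} drift is at least $+0.01$. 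None of this is optional; the margin ($-2.1\cdot 0.47+0.999\approx 0.03$) is thin. Your second half (upgrading to a strict inequality via Lemma \ref{propb} and the edge count) is broadly in the spirit of how the paper assembles Lemma \ref{ind}, but without a correct drift bound for the top coordinate the lemma is not proved.
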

\begin{proof}
Given Lemmas \ref{halfb} 
it suffices to show that w.h.p.\@  the inequality
\begin{align}\label{mi}
n_{k-1,t^*_{k-1}}-\ca n_{k-2,t^*_{k-1}}\geq -[\log^2 n-(k-1)]n^{0.8}/2^{k-1} 
\end{align}
holds. Assume otherwise. Then \eqref{bddv} implies that for $t_{k-1}^*- n^{0.8}/(12\cdot 2^{k-1}) \leq i\leq t_{k-1}^*-1$ the following inequalities hold:
\begin{align}\label{new}
-[\log^2 n-k]n^{0.8}/2^{k-1} \geq  n_{k-1,i}-\ca n_{k-2,i} 
\geq - [\log^2 n-(k-1)]n^{0.8}/2^{k-1}.
\end{align}
Let $X_{k-1,i}=n_{k-1,i}-\ca n_{k-2,i}$.
Similarly to the derivation of \eqref{eq2} (by comparing \eqref{ev} and \eqref{ev2}) we have:
\begin{align}
\e(X_{k-1,i+1}-X_{k-1,i}|\G_i)& \geq -\frac{(1+\ca+2p_{3,i})(k-1)n_{k-1,i}}{2e_i}\nonumber
\\&+\frac{(\ca +2p_{3,i})(k-2)n_{k-2,i}}{2e_i} +\mathbb{I}(\Delta_i=k)-o(1)\nonumber
\\&\geq -\frac{(1+\ca +2p_{3,i}) (k-1)\cdot \ca n_{k-2,i}}{2e_i} \label{116}
\\&+\frac{(\ca+2p_{3,i})(k-2)n_{k-2,i}}{2e_i} +\mathbb{I}(\Delta_i=k)-o(1)
\nonumber
\\&= -\frac{ [\ca+\ca^2(k-1)+ (2\ca+2(\ca-1)(k-2)) p_{3,i} ]n_{k-2,i}}{2e_i} 
\nonumber
\\& +\mathbb{I}(\Delta_i=k)-o(1)
\nonumber
\\&= -\bigg(\frac{\ca^2+\ca+2\ca p_{3,i}}{k-2} +\ca^2+ 2(\ca-1) p_{3,i} \bigg)p_{k-2,i}
 +\mathbb{I}(\Delta_i=k)-o(1) \nonumber
 \\&\geq -2.1p_{k-2,i}  +\mathbb{I}(\Delta_i=k)-o(1).\nonumber
\end{align}
To derive \eqref{116} we used the LHS of \eqref{new}.
In the last line  we used $p_{3,i}\leq 0.081$ (Lemma \ref{remA}) and the inequality $k\geq 8$.
Let $t_\ell=t^*_{k-1}-n^{0.8}/(12\cdot 2^k), t_u=t^*_{k-1}-1$. Corollary  \ref{new1} and Lemma \ref{new2} imply
\begin{align}
\sum_{i=t_\ell}^{t_u} &\e(X_{k-1,i+1}-X_{k-1,i}|\G_i) 
\geq \sum_{i=t_\ell}^{t_u} [-2.1 p_{k-2,i}  +\mathbb{I}(\Delta_i=k)-o(1)] \nonumber
\\&\geq (t_u-t_\ell)[-2.1 p_{k-2,t_\ell} +0.999-(k-2)p_{3,t_\ell}-o(1)] \label{120}
 \\& \geq 0.01 (t_u-t_\ell). \label{121}
\end{align}
In the last inequality we used that $p_{e,t_\ell} \le 0.081$ and that $p_{k-2,t_\ell} \leq 0.47$. To derive the bound on $p_{k-2,t_\ell}$ observe that $\ca n_{k-2,t_\ell} \leq n_{k-1,t_\ell}+o(1)$ implies $\ca p_{k-2,t_\ell} \leq p_{k-1,t_\ell}+o(1)$.
 Therefore,  
\begin{align}
p_{k-2,t_\ell} &\leq (p_{k-2,t_\ell}+ p_{k-1,t_\ell})/(1+\ca) +o(1) 
\leq \bigg(1-\sum_{j=3}^{k-3} p_{j,t_\ell}\bigg)/(1+\ca)+o(1) \nonumber
\\&\leq \bigg(1-\sum_{j=3}^{k-3} \frac{j \cdot \ca^{j-3}p_{3,t_\ell}}{3} \bigg)/(1+\ca)+o(1). \label{122}
\end{align}
\eqref{120} is maximized over $0\leq p_{j,t_\ell}$ and \eqref{122} when 
$p_{3,t_\ell}=...=p_{k-3,t_\ell}=p_{k,t_\ell}=0$ and 
$p_{k-2,t_\ell}= 1/(1+\ca)+o(1) \leq 0.47$.

On the other hand  \eqref{mi}, \eqref{new} imply 
$$\sum_{j=t_\ell}^{t_u} X_{k-1,j+1}-X_{k-1,j}=X_{k-1,t_u}- X_{k-1,t_\ell} \leq 0.$$
Also \eqref{bddv} implies $|X_{k-1,i+1}-X_{k-1,i}| \leq 12$ 
Thus the Azuma-Hoeffding inequality gives us,
\begin{align*}
\pr(& t_{k-1} \leq \tau_{k-1}) \leq 
\pr\bigg(\sum_{j=t_l}^{t_{u}^*} X_{k-1,j+1}-X_{k-1,j} \leq 0 \bigg| F_{k,t_{k-1}} \bigg)+o(n^{-0.5})
\\ &\leq  2  \exp \bigg\{-\frac{( 0.01 n^{0.8}/12\cdot 2^{k-1})^2}{2\cdot 12^2 \cdot (n^{0.8}/12\cdot 2^{k-1})} \bigg\}  +o(n^{-0.5}) =o(n^{-0.5}).
\end{align*}
\end{proof}
The final part of Lemma \ref{ind} is proved in the following Lemma.
\begin{lem}\label{time}
Let $k\geq 8$.  
Then, w.h.p.
$$ \tau_{k-1} \leq 1.5n_{k,0}+n^{0.6} \text{ and } e_{\tau_{k-1}}\geq (1-4/k)e_0=\Omega(n).$$
\end{lem}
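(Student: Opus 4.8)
We prove the two assertions separately: first that $\tau_{k-1}$ is small, and then, using this, that few edges are lost by time $\tau_{k-1}$. Throughout we work on the intersection of the w.h.p.\@ events given by Lemma \ref{halfb2} (so that $\tau_{k-1}=t^*_{k-1}<t_{k-1}$) and by part (b) of Corollary \ref{stopF} (so that $F_{k,t_{k-1}}$ holds). On this event, for every $i<\tau_{k-1}$ the $i^{th}$ Hyperaction is good, $\G_i\in\cC_{3,k-1}$, $e_i\geq n^{0.9}$ and $\Delta_i\geq k$, and by Lemma \ref{remA} $p_{3,i}\leq 0.081$; moreover the lower bounds $p_{j,i}\geq\ca^{j-3}(j/3)p_{3,i}-o(1)$ for $3\leq j\leq k-1$ that underlie Lemma \ref{remA} (and are used in the proof of Lemma \ref{halfb}) are in force, as is \eqref{ed1}.

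\textbf{Bounding $\tau_{k-1}$.} Since $\G_0$ has maximum degree $k$ we have $ex_{k-1,0}=n_{k,0}$, and $ex_{k-1,i}=0$ iff $\Delta_i\leq k-1$; because a max-edge removal lowers the maximum degree by at most one and contractions never lower it, the maximum degree reaches $k-1$ without skipping it, so $\tau_{k-1}$ is exactly the first $i$ with $ex_{k-1,i}=0$. We track $ex_{k-1,i}$. Lemma \ref{multbounds} with $\ell=k-1$ gives $|ex_{k-1,i+1}-ex_{k-1,i}|\leq k-3$ and the drift bound \eqref{elldrift}. The factor $k-4$ in the last term of \eqref{elldrift} is just the a-priori bound $u_{k-1}=k-4$ on the increase of $ex_{k-1}$ in Case~5 of that proof; inspecting that case, the true expected increase caused by the Type-3a contraction is $p_{3,i}\,\E{(d_1+d_2-k-1)\mathbb{I}(d_1+d_2\geq k+2)}$ with $d_1,d_2$ i.i.d.\@ of law $(p_{j,i})_j$, so that whenever $0<ex_{k-1,i}\leq\log^2 n$,
\[
\E{ex_{k-1,i+1}-ex_{k-1,i}\mid\G_i}\leq -1-p_{k,i}(1+2p_{3,i})+p_{3,i}\,\E{(d_1+d_2-k-1)\mathbb{I}(d_1+d_2\geq k+2)}+o(n^{-0.75}).
\]
Combining $p_{3,i}\leq 0.081$ with the lower bounds $p_{j,i}\gtrsim\ca^{j-3}(j/3)p_{3,i}$ — which force $(p_{j,i})_j$ to be spread out rather than concentrated near degree $k-1$ whenever $p_{3,i}$ is close to its maximum — a finite maximization (worst case $k=8$) shows that the right-hand side is at most $-c_0$ for an absolute constant $c_0>2/3$, uniformly in $k\geq 8$. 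Put $T:=\lceil 1.5\,n_{k,0}\rceil+\lceil n^{0.6}\rceil$. If $\tau_{k-1}>T$, then $ex_{k-1,i}>0$ (hence $\Delta_i\geq k>3$) for all $i\leq T$, so $T<\tau_3$ and the drift bound applies on $[0,T)$; the Azuma--Hoeffding inequality (Lemma \ref{ah}, with $b=k-3$ and $T\leq kn/2$) then yields, w.h.p.,
\[
ex_{k-1,T}\leq n_{k,0}-c_0 T+O(\sqrt n\,\log n)<0,
\]
since $c_0T-n_{k,0}\geq (c_0-\tfrac23)n_{k,0}+c_0 n^{0.6}\geq c_0 n^{0.6}\gg\sqrt n\,\log n$. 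As $ex_{k-1,T}\geq 0$ always, this is a contradiction, and so w.h.p.\@ $\tau_{k-1}\leq T=1.5\,n_{k,0}+n^{0.6}$.

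\textbf{Bounding $e_{\tau_{k-1}}$.} Since $\G_0$ has maximum degree $k$, $e_0=\tfrac12\sum_j jn_{j,0}\geq\tfrac k2 n_{k,0}$, so $n_{k,0}\leq 2e_0/k$ and $T\leq 3e_0/k+n^{0.6}=O(n)$; also $e_0\geq 3n/2$ as $\delta(\G_0)=3$. By \eqref{bdedges} every increment $e_i-e_{i+1}$ lies in $[0,6]$, and by \eqref{edgedrift} together with $p_{3,i}\leq 0.081$ we have $\E{e_i-e_{i+1}\mid\G_i}=1+2p_{3,i}+o(n^{-0.75})\leq 1.163$ for $i<\tau_{k-1}$. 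Applying Lemma \ref{ah} to $\sum_{i=0}^{\min(\tau_{k-1},T)-1}(e_i-e_{i+1})$ and using the bound $\tau_{k-1}\leq T$ from the previous step, w.h.p.
\[
e_0-e_{\tau_{k-1}}=\sum_{i=0}^{\tau_{k-1}-1}(e_i-e_{i+1})\leq 1.163\,T+O(\sqrt{n\log n})\leq 1.75\,n_{k,0}+o(n)\leq\frac{3.5}{k}\,e_0+o(n).
\]
Hence $e_{\tau_{k-1}}\geq\bigl(1-\tfrac{3.5}{k}\bigr)e_0-o(n)\geq\bigl(1-\tfrac 4k\bigr)e_0$, the last step because $\tfrac{0.5}{k}e_0=\Omega(n)$ dominates the $o(n)$ error; for $k\geq 8$ this also gives $e_{\tau_{k-1}}\geq(1-4/k)e_0\geq 3n/4=\Omega(n)$. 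In particular $\tau_{k-1}$ is not triggered by the clause $e_i<n^{0.9}$, consistent with the identification of $\tau_{k-1}$ made above.

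\textbf{Where the difficulty lies.} The substantive point is the drift estimate for $ex_{k-1,i}$: the off-the-shelf bound \eqref{elldrift} gives only about $-0.65$ at $k=8$, which is not below $-2/3$, so one must sharpen Case~5 of the proof of Lemma \ref{multbounds} (replacing $k-4$ by the true expected degree of the merged vertex) and then genuinely invoke the $\cC_{3,k-1}$ inequalities $p_{j,i}\gtrsim\ca^{j-3}(j/3)p_{3,i}$ to rule out the ``mass near degree $k-1$'' profiles that would otherwise keep the drift too shallow. Everything else — that the maximum degree attains $k-1$ without jumping over it, and the bookkeeping of the stopping times $\tau_{k-1}\leq t^*_{k-1}<t_{k-1}$ and $\tau_3$ — is routine given \eqref{bddv} and the fact that a max-edge removal changes the maximum degree by at most one.
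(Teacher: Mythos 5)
Your proof is correct and follows the same architecture as the paper's: identify $\tau_{k-1}$ with the hitting time of $ex_{k-1,i}=0$, establish a negative drift for $ex_{k-1,i}$ of magnitude exceeding $2/3$ so that Azuma--Hoeffding forces $ex_{k-1}$ (which starts at $n_{k,0}$ and moves by $O(1)$ per step) to vanish within $1.5n_{k,0}+n^{0.6}$ steps, and then combine $\E{e_{i}-e_{i+1}\mid\G_i}\leq 1+2p_{3,i}\leq 1.163$ with $e_0\geq \tfrac{k}{2}n_{k,0}$ and a second Azuma application to get $e_0-e_{\tau_{k-1}}\leq (4/k)e_0$. The one substantive divergence is at the drift estimate, and there your extra care is warranted: the paper simply writes $-(1-p_{3,i})-p_{k,i}-p_{3,i}^3+(k-4)p_{3,i}\leq -1+(k-4)p_{3,i}\leq -0.67$, a chain whose middle step needs $p_{3,i}\leq p_{k,i}+p_{3,i}^3$ and is not justified (and can fail, since $p_{k,i}$ may be $o(1)$ for $i<\tau_{k-1}$); as you observe, \eqref{elldrift} alone yields only about $-0.647$ at $k=8$, which is insufficient for the constant $1.5$. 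Your fix --- replacing the crude Case-5 bound $\ell-3$ by the exact Type-3a contribution $p_{3,i}\,\E{(d_1+d_2-k-1)^+}-2p_{3,i}p_{k,i}$ and then invoking the $\cC_{3,k-1}$ constraints $p_{j,i}\gtrsim \ca^{j-3}(j/3)p_{3,i}$ --- is sound: at $p_{3,i}$ near its maximum $0.081$ the profile is essentially forced to $p_{j,i}\propto j\ca^{j-3}$, $p_{k,i}\approx 0$, which gives $\E{(d_1+d_2-9)^+}\approx 2.46$ and hence drift about $-0.80$, and the product $p_{3,i}\E{(d_1+d_2-k-1)^+}$ stays well below $1/3$ over the whole feasible region, so your asserted $c_0>2/3$ does hold. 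The only soft spot is that you assert rather than execute this finite maximization, but the assertion checks out; the remaining bookkeeping (that $\Delta_i$ does not skip over $k-1$, and the edge count at the end) matches the paper.
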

\begin{proof} 
We condition on the event $F_{\tau}$ occurring. Lemmas \ref{halfb} and \ref{halfb2} imply that w.h.p. $F_{k,\tau_{k-1}} \subseteq F_{k,t_{k-1}}$. Hence, from Corollary \ref{stopF} follows that it  occur w.h.p.\@. Using the bound provided by \eqref{elldrift} (with $\ell=k-1$) we get:
\begin{align*}
\E{ex_{k-1,i+1}-ex_{k-1,i}| \G_i}
&\leq  -(1-p_{3,i})-p_{k,i} -p_{3,i}^3 + (k-4)p_{3,i}+n^{-0.75}
\\& \leq -1+(k-4)p_{3,i} +n^{-0.75} \leq -0.67.
\end{align*}
In the last inequality we use that  $(k-4)p_{3,i}\leq (k-4)\cdot 3/ (\sum_{i=3}^{k-1} i\cdot \ca^{i-3}) +o(1) \leq 0.33$ for $k\geq 8$ (see Lemma \ref{remA}). 
Since $ex_{k-1,0}=n_{k,0}$ and $ex_{k-1,\tau_{k-1}}=n_{k,\tau_{k-1}}
=0$, using \eqref{bddex}, by the Azuma Heoffding inequality we get
\begin{align*}
\pr(\tau_{k-1} &\geq 1.5n_{k,0} +n^{0.6})\leq 
 \pr( ex_{k-1,i}>0 \text{ for } i\leq 1.5n_{k,0} +n^{0.6})
\\& \leq  2\exp\bigg\{ -\frac{[n_{k,0}-0.67 \cdot(1.5 n_{k,0} +n^{0.6})]^2}{2(k-2)^2(1.5n_{k,0}  +n^{0.6})}\bigg\} =o(n^{-0.5}).
\end{align*}
For $i\leq \tau_{k-1}<t_{k-1}$, using $p_{3,i}\leq0.081$ (Lemma \ref{remA}),
 \eqref{edgedrift} implies that $\e(e_{i+1}-e_i|\G_i) \geq -1.2$.
Conditioned on the event $\tau_{k-1}  \leq 1.5n_{k,0} +n^{0.6}$
, using \eqref{bdedges}, the Azuma Hoeffding inequality gives, 
\begin{align*}
\pr(e_0-e_{\tau_{k-1}} \geq 1.9n_{k,0} +n^{0.7})& \leq  
2\exp\bigg\{- \frac{[ 1.9n_{k,0} +n^{0.7}   -1.2( 1.5n_{k,0} +n^{0.6})]^2}{2\cdot 6^2 \cdot( 1.5n_{k,0} +n^{0.6}) } \bigg\}  \\&=o(n^{-0.5}).
\end{align*}
$k\geq 8$ implies,
\begin{align*}
e_{\tau_{k-1}} &\geq e_0 - 1.9n_{k,0} -n^{0.7}
\geq \sum_{j=3}^{k-1} 0.5jn_{j,0}+ 0.5kn_{k,0} -1.9n_{k,0} -n^{0.7}
\\&  \geq (1-4/k)\sum_{j=3}^{k-1} 0.5jn_{j,0}+ 0.5(1-4/k)kn_{k,0} =(1-4/k)e_0
\end{align*}
\end{proof}

\section{Proof of Lemma \ref{567}}\label{s567}
In this section,  we fix $k\in \{5,6,7\}$. We also let $\bd$ be a degree sequence with minimum degree 3 and maximum degree $k$ and $G=\G_0$ be a random graph with degree sequence $\bd$ and no loops. For the rest of this section we condition on $F_{k,\tau_{k-1}}$ occurring. Corollary \ref{stopF} states that it does occurs w.h.p.\@.  

The proof of Lemma \ref{567} can be split into two parts. In the first part, done as Lemma, \ref{part1} we let
$$t^*=\min\{i: ex_{k-1,i} \leq 10^{-2}e_{i}\}$$ 
and we show that  
$e_{t^*}\geq e_0/10^{25}=  \Omega(n)$.
In the second part, done as Lemma \ref{part2},
we show that $\tau_{k-1} \leq t^*+6 e_{t^*}/10^2.$

To  prove the first part, for $i<\tau$ we let 
$$X_i=\bigg[ (ex_{k-1,i+1}-ex_{k-1,i}) - 2.4  \frac{ ex_{k-1,i}}{e_i}  (e_{i+1}-e_{i}) \bigg]$$
Roughly speaking $X_i$ compares the rates of decrease of $ex_{k-1,i}$ and $e_i$ 
after the $i^{th}$ Hyperaction. In Lemma \ref{-drift} we show that $X_i$ decreases in expectation. Using this fact, we show that after a number of Hyperactions the ratio $ex_{k-1,i}/e_i$ decreases. As a consequence we will prove that there exists  
$t^*$ such that $e_{t^*}\geq e_0/10^{25}=  \Omega(n)$ and 
$ex_{k-1,t^*} \leq 10^{-2}e_{t^*}.$

For the second part it suffices to argue that $ex_{k,i}$ is decreased by at least 0.2 in expectation after the $i^{th}$ Hyperaction for $i\leq \tau_{k-1}$ (done in Lemma \ref{-drift}).
From the first part we have that $ex_{k-1,t^*} \leq e_{t^*}/100$ and therefore, in expectation, $ex_{k-1,t^*}$ reaches 0 in $5e_{t^*}/100$ Hyperactions. At the same time the number of edges is decreased by at most 6 (see \eqref{bdedges}) and hence after $6e_{t^*}/100$ Hyperactions it will remain linear in $n$.

We start with a technical Lemma. \eqref{-0.2} and \eqref{faster} are used in the proofs of Lemmas \ref{part2} and \ref{part1} respectively.
\begin{lem}\label{-drift}
For $i < \tau_{k-1}$ 
\begin{align}\label{-0.2}
\E{ex_{k-1,i+1}-ex_{k-1,i}|\G_i}\leq -0.2.
\end{align}
Furthermore, 
\begin{align} \label{faster}
\E{X_i|\G_i}<0 \vspace{10mm} \text{ and } \vspace{5mm} |X_i| \leq k+11.
\end{align}
\end{lem}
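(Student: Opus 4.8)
The plan is to prove \eqref{-0.2} first, since it is essentially a specialization of \eqref{elldrift} from Lemma \ref{multbounds}, and then build \eqref{faster} on top of it using the analogue of \eqref{edgedrift}. For \eqref{-0.2} I would apply \eqref{elldrift} with $\ell = k-1$: conditioned on $i < \tau_{k-1}$ we have maximum degree at least $k$ and, since we are working under the standing conditioning on $F_{k,\tau_{k-1}}$, we know $ex_{k,i} \le \log^2 n$ and a good Hyperaction is applied, so the hypotheses of Lemma \ref{multbounds} are met (note $ex_{k-1,i} \ge n_{k,i} \ge 1$ when $\Delta_i \ge k$, so the ``$ex_{\ell,i}=0$'' alternative does not occur). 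This gives
\[
\E{ex_{k-1,i+1}-ex_{k-1,i}\mid\G_i} \le -(1-p_{3,i}) - p_{k,i} - p_{3,i}^3 + (k-4)\,p_{3,i}(1-p_{3,i})^2 + n^{-0.75}.
\]
For $k\in\{5,6,7\}$ one maximizes the right-hand side over $p_{3,i}\in[0,1]$ (dropping the $-p_{k,i}$ and $-p_{3,i}^3$ terms only makes it larger); a one-variable calculus check shows the maximum of $-(1-p)+(k-4)p(1-p)^2$ over $p\in[0,1]$ stays below $-0.2$ for each such $k$ (the worst case is $k=7$), which yields \eqref{-0.2}.

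For the bound $|X_i|\le k+11$ in \eqref{faster} I would just combine the deterministic bounds already available: $|ex_{k-1,i+1}-ex_{k-1,i}| \le (k-1)-3+\mathbb{I}(ex_{k-1,i}=0) \le k-3$ by \eqref{bddex}, and $|e_{i+1}-e_i|\le 6$ by \eqref{bdedges}; since $ex_{k-1,i}/e_i \le 1$ (as $ex_{k-1,i}\le e_i$ always, each excess unit being covered by an incident edge), the coefficient $2.4\,(ex_{k-1,i}/e_i)|e_{i+1}-e_i| \le 2.4\cdot 6 < 15$, so $|X_i| \le (k-3) + 15 = k+12$; a slightly more careful accounting (using that $ex_{k-1,i}/e_i$ is genuinely less than $1$ and that the extreme cases of the two bounds cannot occur simultaneously) tightens this to $k+11$. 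This step is routine.

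The substantive part is $\E{X_i\mid\G_i}<0$. Here I would use \eqref{edgedrift}, namely $\E{e_{i+1}-e_i\mid\G_i} = -1-2p_{3,i}+o(n^{-0.75})$, together with \eqref{elldrift} at $\ell=k-1$, to get
\[
\E{X_i\mid\G_i} \le \Big[-(1-p_{3,i}) - p_{k,i} - p_{3,i}^3 + (k-4)p_{3,i}(1-p_{3,i})^2\Big] - 2.4\,\frac{ex_{k-1,i}}{e_i}\Big(-1-2p_{3,i}\Big) + o(1).
\]
Since $-1-2p_{3,i} < 0$, the term $-2.4\,(ex_{k-1,i}/e_i)(-1-2p_{3,i})$ is \emph{positive}, so the naive bound is not immediately negative — the point is that in the regime where $X_i$ is being used (Lemma \ref{part1}), we have $ex_{k-1,i} > 10^{-2} e_i$, hence $ex_{k-1,i}/e_i > 10^{-2}$, which would make this positive term \emph{larger}, not smaller. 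So I expect the real argument is the reverse: one needs the \emph{drift of $ex_{k-1,i}$ relative to $e_i$}, i.e. the constant $2.4$ is chosen precisely so that whenever $ex_{k-1,i}/e_i$ is in the relevant range, the dominant negative contribution $-(1-p_{3,i})$ from the $ex$-drift beats $2.4\,(ex_{k-1,i}/e_i)\cdot(1+2p_{3,i})$ coming from the edge-drift — this forces an upper bound on $ex_{k-1,i}/e_i$ (of order at most $\sim 0.1$) for the inequality to be used, matching the threshold $10^{-2}$ in the definition of $t^*$. The hard part will be pinning down this two-variable optimization — over $p_{3,i}\in[0,1]$ and over the admissible range of $ex_{k-1,i}/e_i$ — and checking the numerics (using $p_{3,i}\le \text{const}$ from Lemma \ref{remA} where available, and the relation $ex_{k-1,i}/e_i \le $ something explicit from $ex_{k-1,i}\le (k-1-\text{avg deg})\cdot$(vertex count) type bounds for $k\in\{5,6,7\}$) so that the bracketed expression stays strictly negative; I would reduce it to a finite case check over $k\in\{5,6,7\}$ and a compact domain, then verify the maximum is negative.
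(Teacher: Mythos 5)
Your treatment of \eqref{-0.2} coincides with the paper's: set $\ell=k-1$ in \eqref{elldrift} (the alternative $ex_{k-1,i}=0$ is excluded because $i<\tau_{k-1}$ forces $\Delta_i\ge k$) and check numerically that the right-hand side stays below $-0.2$ over $k\in\{5,6,7\}$ and $p_{3,i}\in[0,1]$. The deterministic bound $|X_i|\le k+11$ is also fine modulo arithmetic: since $ex_{k-1,i}>0$ the indicator in \eqref{bddex} vanishes, giving $|ex_{k-1,i+1}-ex_{k-1,i}|\le k-4$ and hence $|X_i|\le (k-4)+2.4\cdot 6=k+10.4\le k+11$, so no extra cleverness is needed to shave your $k+12$ down.

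For $\E{X_i|\G_i}<0$, however, there is a genuine gap, and the direction you propose for closing it is the wrong one. You treat $r=ex_{k-1,i}/e_i$ as a parameter decoupled from the degree distribution and then guess that the inequality only needs to hold when $r$ is small, of the order of the $10^{-2}$ threshold defining $t^*$. But Lemma \ref{part1} invokes $\E{X_i|\G_i}<0$ along the whole stretch from an arbitrary index $j$ up to the stopping time $s^*$, where $r$ can be as large as roughly $2/k$; a version of the lemma valid only for $r\le 10^{-2}$ would be useless for its intended application. The idea you are missing is that $r$ is not free: conditioned on $F_{k,\tau_{k-1}}$ one has $ex_{k,i}\le\log^2 n$, hence
$$ex_{k-1,i}=n_{k,i}+ex_{k,i}=n_{k,i}+O(\log^2 n),\qquad\text{so}\qquad \frac{ex_{k-1,i}}{e_i}=\frac{2p_{k,i}}{k}+o(1).$$
Substituting this together with \eqref{edgedrift} turns the positive edge-drift contribution into $\tfrac{4.8}{k}\,p_{k,i}(1+2p_{3,i})+o(1)$, which is absorbed by the $-p_{k,i}$ term already present in \eqref{elldrift} (a term you wrote down in your display but never exploited) in conjunction with the constraint $p_{3,i}+p_{k,i}\le 1$. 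The bound then becomes an explicit function of the two variables $p_{3,i},p_{k,i}$ on the simplex, and the paper verifies numerically that the maximum of $k\,\E{X_i|\G_i}$ over $k\in\{5,6,7\}$ is $-391/1960<0$, attained at $k=5$, $p_{3,i}=99/196$, $p_{5,i}=97/196$. Without identifying $ex_{k-1,i}/e_i$ with $2p_{k,i}/k$ your optimization does not close.
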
  
\begin{proof}
$i<\tau_{k-1}$ implies  that $ex_{k-1,i}>0$. 
By setting $\ell=k-1$ in \eqref{elldrift} we get 
\begin{align*}
\E{ex_{k-1,i+1}-ex_{k-1,i}| \G_i }\leq  -(1-p_{3,i})-p_{3,i}^3 +(k-4) p_{3,i} (1-p_{3,i})^2+n^{-0.75} \leq -0.2.
\end{align*}
The last inequality can be easily verified numerically
since for $k\in \{5,6,7\}$. Its maximum over $k\in \{5,6,7\}, p_{3,i}\in [0,1]$ is $-0.23020$ and it is attained at $k=7, p_{3,i} = 0.42265$. 
\vspace{3mm}
\\$F_{k,\tau_{k-1}}$ occurs and 
therefore for $i < \tau_{k-1}$,
$$ex_{k-1,i}=n_{k,i}+ex_{k,i}= n_{k,i}+ O(\log^2 n).$$ 
Also $i<\tau_{k-1}$ implies that $e_i> n^{0.9}$. Thus, 
\begin{align}\label{aux1}
\frac{kex_{k-1,i}}{2e_{i}}= \frac{kn_{k,i}+O(\log^2 n)}{2e_{i}}=p_{k,i}+o(1).   
\end{align}
\eqref{edgedrift} and \eqref{aux1} imply 
\begin{align}\label{aux2}
\E{\frac{ ex_{k-1,i}}{e_i}  (e_{i+1}-e_{i})  \bigg|\G_i  }
&= \bigg(\frac{2p_{k,i}}{k}+o(1)\bigg) \cdot (-1-2p_{3,i}   + o(n^{-0.75})) \nonumber
\\ &= -\frac{ 2p_{k,i}(1 + 2p_{3,i})}{k}+o(1). 
\end{align}
 \eqref{elldrift} (with $\ell= k-1$) and \eqref{aux2} imply
\begin{align*}
k\E{X_i|\G_i}\leq  k[ -(1-p_{3,i})-p_{k,i} -p_{3,i}^3 +(k-4) p_{3,i} (1-p_{3,i})^2]     + 4.8 p_{k,i} (1 + 2p_{3,i}) +o(1)
\end{align*}
The maximum of the above expression over $k\in \{5,6,7\}$, $p_{3,i},p_{k,i},p_{3,i}+p_{k,i} \in [0,1]$ is $-391/1960$ attained at 
$k=5$, $p_{3,i}=99/196$ and $p_{5,i}=97/196$.
\eqref{bdedges} and \eqref{bddex} imply  $|e_{i+1}-e_i|\leq 6$ and 
 $|ex_{k-1,i+1}-ex_{k-1,i}| \leq k-4$ respectively.
Therefore $|X_i|\leq (k-4)+2.4 \cdot 1 \cdot 6 \leq k+11.$
\end{proof}

\begin{lem}\label{part1}
Let
$$t^*=\min\{i: ex_{k-1,i} \leq 10^{-2}e_{i}\}.$$ 
Then w.h.p. $e_{t^*}\geq e_0/10^{25}=  \Omega(n)$.
\end{lem}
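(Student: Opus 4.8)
The plan is to track the potential $\Psi_i:=\log ex_{k-1,i}-2.4\log e_i$ (so $\exp(\Psi_i)=ex_{k-1,i}/e_i^{2.4}$) and show that, up to a negligible error, it is a supermartingale; since $\Psi_0$ is very negative and $e_i$ is decreasing, $ex_{k-1,i}$ cannot stay as large as $e_i/100$ until $e_i$ has fallen all the way to $e_0/10^{25}$. First I would dispose of the trivial case: if $ex_{k-1,0}\le e_0/100$ then $t^*=0$ and $e_{t^*}=e_0$, so assume $ex_{k-1,0}=n_{k,0}>e_0/100$. Recall that throughout this section we condition on $F_{k,\tau_{k-1}}$, which holds w.h.p.\@ by Corollary \ref{stopF}. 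Introduce the competing stopping time $t^{**}:=\min\{i: e_i<e_0/10^{25}\}$ and let $T:=\min\{t^*,t^{**}\}$; it suffices to prove that $T=t^*$ w.h.p., since then $e_{t^*}=e_T\ge e_0/10^{25}=\Omega(n)$.

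The next step is to check $T\le\tau_{k-1}$, so that everything on $[0,T)$ is covered by $F_{k,\tau_{k-1}}$. Indeed, if \textsc{Reduce} reaches a graph $\G_{\tau_{k-1}}$ of maximum degree $k-1$ then $ex_{k-1,\tau_{k-1}}=0\le e_{\tau_{k-1}}/100$, so $t^*\le\tau_{k-1}$; and if instead $e_{\tau_{k-1}}<n^{0.9}$ then, since $e_0\ge 3n/2$, we have $e_{\tau_{k-1}}<e_0/10^{25}$ for $n$ large, so $t^{**}\le\tau_{k-1}$. Hence for every $i<T$ the $i^{th}$ Hyperaction is good, and moreover $e_i\ge e_0/10^{25}$ and $ex_{k-1,i}>e_i/100$; in particular both $ex_{k-1,i}=\Omega(n)$ and $e_i=\Omega(n)$ throughout $[0,T)$.

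Now I would compute the drift. Using $\log(1+x)=x+O(x^2)$ together with $|ex_{k-1,i+1}-ex_{k-1,i}|\le k-4$ (by \eqref{bddex}, as $ex_{k-1,i}>0$) and $|e_{i+1}-e_i|\le 6$ (by \eqref{bdedges}) against the $\Omega(n)$ denominators, one gets for $i<T$
$$\Psi_{i+1}-\Psi_i=\frac{ex_{k-1,i+1}-ex_{k-1,i}}{ex_{k-1,i}}-2.4\,\frac{e_{i+1}-e_i}{e_i}+O(n^{-2})=\frac{X_i}{ex_{k-1,i}}+O(n^{-2}),$$
where $X_i$ is exactly the quantity of Lemma \ref{-drift} — this is precisely why $2.4$ is the right coefficient. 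Since $\E{X_i\mid\G_i}<0$ and $|X_i|\le k+11$ by that lemma, we obtain $\E{\Psi_{i+1}-\Psi_i\mid\G_i}\le O(n^{-2})$ and $|\Psi_{i+1}-\Psi_i|=O(1/n)$ on $[0,T)$. As $T\le\tau\le kn/2$ by \eqref{ed1}, the Azuma-Hoeffding inequality (applied as in the proof of Lemma \ref{general}) then yields that w.h.p.\@ $\Psi_i\le\Psi_0+2$ for all $i\le T$.

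Finally, the contradiction. Since $ex_{k-1,0}=n_{k,0}\le n\le 2e_0/3<e_0$, we have $\Psi_0<\log e_0-2.4\log e_0=-1.4\log e_0$, so w.h.p.\@ $ex_{k-1,T}<\exp(2)\,e_T^{2.4}\,e_0^{-1.4}$. If $T=t^{**}<t^*$, then also $ex_{k-1,T}>e_T/100$, and combining the two bounds gives $e_T^{1.4}>e_0^{1.4}/(100\exp(2))$, i.e.\@ $e_T>e_0\,(100\exp(2))^{-5/7}>e_0/10^{25}$, contradicting $e_T=e_{t^{**}}<e_0/10^{25}$. Hence w.h.p.\@ $T=t^*$ and $e_{t^*}\ge e_0/10^{25}=\Omega(n)$. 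I expect the main obstacle to be the stopping-time bookkeeping — verifying $T\le\tau_{k-1}$ (so that the ``good Hyperaction'' hypothesis and Lemma \ref{-drift} are available on $[0,T)$) and confirming that on $[0,T)$ both $ex_{k-1,i}$ and $e_i$ stay $\Omega(n)$, which is what makes the $\log$-expansion errors $O(n^{-2})$ and the increments $O(1/n)$; the rest is a routine supermartingale estimate, and the constant $10^{25}$ is hugely wasteful, the argument actually delivering $e_{t^*}\gtrsim e_0/112$.
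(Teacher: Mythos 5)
Your proof is correct, and it takes a genuinely different route from the paper's. The paper proves an iterated ``halving'' claim: starting from an index $j$ it introduces $s^*=\min\{i\ge j:\ ex_{k-1,i}\le 0.11\,ex_{k-1,j}\}$, compares the increments of $ex_{k-1,i}$ and $e_i$ through $X_i$ and an auxiliary $Y_i$ whose coefficient $1.2\,ex_{k-1,j}/e_j$ is frozen at the start of the phase, and concludes that by time $s^*$ the ratio $ex_{k-1}/e$ either halves or drops below $10^{-2}$, at a cost of at most a factor $10^3$ in the number of edges; iterating eight times gives $e_{t^*}\ge e_0/10^{24}$. You instead run a single supermartingale argument for the logarithmic potential $\Psi_i=\log ex_{k-1,i}-2.4\log e_i$, whose one-step increment is exactly $X_i/ex_{k-1,i}+O(n^{-2})$ once both $ex_{k-1,i}$ and $e_i$ are $\Omega(n)$ on $[0,T)$ --- which the definitions of $t^*$ and $t^{**}$ guarantee --- so the same drift input, Lemma \ref{-drift}, does all the work (and your identification of $2.4$ as the coefficient that makes the $\log$-increment reproduce $X_i$ is exactly right). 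Your bookkeeping ($T\le\tau_{k-1}$, the $\Omega(n)$ lower bounds, the use of \eqref{bddex}, \eqref{bdedges}, \eqref{ed1} and Lemma \ref{ah}) is sound; the one boundary case worth a sentence is $t^*=t^{**}$, where you should apply your bound $e_i>e_0/112$ at $i=T-1$ and then use $e_T\ge e_{T-1}-6$, but this is trivial. What your approach buys is economy (one phase instead of eight, no frozen-coefficient comparison of $Y_i$ against $X_i$) and a far better constant, $e_{t^*}\gtrsim e_0/112$ versus the paper's $e_0/10^{24}$; the paper's phased version uses essentially the same lower bounds on $ex_{k-1,i}$ and $e_i$ anyway, so you lose nothing in generality.
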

\begin{proof}
We start by proving Claim 1. We later use Claim 1 to show that there exists $t^*<\tau_{k-1}$ such that $ex_{k-1,t^*}\leq e_{t^*}/10^2$ and $e_{t^*}\geq e_0/10^{24}=\Omega(n)$.
\vspace{3mm}
\\ \noindent{\bf Claim 1:} W.h.p.\@ for every $j\in \mathbb{N}$ such that $e_j=\Omega(n)$ and $j<\tau_{k-1}$ at least one of the following hold:
\begin{itemize}
\item[] i) there exists $s_j\geq j$ such that $e_{s_j}\geq e_j/10^3 =\Omega(n)$ and
$ex_{k-1,s_j}\leq e_{s_j}/10^2$
\vspace{-2mm}
\item[] ii) there exists $s_j \geq j$ such that $e_{s_j}\geq e_j/10^3 =\Omega(n)$ and ${ex_{k-1,s_j}}/e_{s_j} 
\leq 0.5{ex_{k-1,j}}/{e_j}$.
\end{itemize} 
\noindent{\bf Proof of Claim 1:}
Let $j\in \mathbb{N}$ be such that $e_j=\Omega(n)$. 
Let $$s^*:=\min\{i \geq j: ex_{k-1,i} \leq 0.11 ex_{k-1,j}\}.$$
Then for $j\leq i<s^*$,
$$e_i\geq ex_{k-1,i}\geq 0.11ex_{k-1,j} \geq 0.11e_j/10^2=\Omega(n).$$
Thus for $j\leq i<s^*$ the inequalities 
$e_i=\Omega(n)$, $ex_{k-1,i}>0$ hold. Therefore $i\leq s^* \leq\tau_{k-1}$. 
Lemma \ref{-drift}  implies that w.h.p.\@  
 $\e(X_i|\G_i)\leq 0$ and 
$|X_i|\leq  k+11$ for every 
$i < s^*$.
\eqref{ed1} implies that $s^*\leq \tau \leq kn/2 \leq 3.5n$.
Hence, from the Azuma-Hoeffding Inequality we have, 
\begin{align}\label{eq1}
\pr\bigg( \sum_{r=j}^{s^*-2} X_r > n^{0.6}\bigg) & \leq s^* \cdot 2\exp\bigg\{-\frac{(n^{0.6})^2}{ 2(k+11)^2 \cdot 3.5n}\bigg\} +\pr( \neg F_{k,\tau_{k-1}}) \nonumber
\\&\leq 7ne^{-n^{0.19}}+ o(n^{-0.5}) =o(n^{-0.5}).
\end{align} 
Now for $j\leq i<s^*$ let 
 $$Y_i=\big[ (ex_{k-1,i+1}-ex_{k-1,i}) - 1.2  \frac{ ex_{k-1,j}}{e_j}  (e_{i+1}-e_{i}) \big].$$
Assume that (ii) does not hold. Then for $j\leq i<s^*$, ${ex_{k-1,s_j}}/e_{s_j} 
> 0.5{ex_{k-1,j}}/{e_j}$. In the second case  
\eqref{eq1}, the definitions of $X_i,Y_i$  and  the fact that $e_i$ is decreasing with respect to $i$ imply that w.h.p.\@ 
$$n^{0.6} \geq \sum_{r=j}^{\tau^*-2}  X_i \geq \sum_{r=j}^{\tau^*-2}  Y_i.$$
Hence,
\begin{align*}
0.11 ex_{k-1,j} &\leq ex_{k-1,s^*-1}= ex_{k-1,j}-\sum_{i=j}^{s^*-2} (ex_{k-1,i}-ex_{k-1,i+1})
\\& \leq ex_{k-1,j}-  \sum_{i=j}^{s^*-2} [(ex_{k-1,i}-ex_{k-1,i+1})+Y_i] +n^{0.6}
\\& \leq ex_{k-1,j} + \sum_{i=j}^{s^*-2} 1.2 \frac{ex_{k-1,j}}{e_{j}}  (e_{i+1}-e_{i}) +n^{0.6}
\\& =ex_{k-1,j} +1.2 \frac{ex_{k-1,j}}{e_{j}}  (e_{s^*-1}-e_j) +n^{0.6}
\\& =-0.2 ex_{k-1,j} + 1.2 \frac{ex_{k-1,j}}{e_{j}}  e_{s^*-1} +n^{0.6}.
\end{align*}
The last equality implies that $0.31 ex_{k-1,j} \leq 1.2 \frac{ex_{k-1,j}}{e_{j}}  e_{s^*-1} +n^{0.6}$ and hence 
$$\frac{0.31e_j}{1.2} \leq e_{s^*-1} +  \frac{e_j}{1.2e_{k-1,j}} \cdot n^{0.6}.$$
Assume that  condition (i) does not hold. Then, $e_j< 10^2 ex_{k-1,j}$ and so 
\begin{align}\label{w1}
0.25 e_j \leq e_{s^*-1} + 10^2 n^{0.6}.
\end{align}
 Conditioned on $F_{k,\tau_{k-1}}$ the $(s^*-1)^{th}$ Hyperaction is good thus \eqref{bdedges} 
gives us
\begin{align}\label{w2}
e_{s^*}\geq e_{s^*-1}-  6 .
\end{align}
The definition of $s^*$, \eqref{w1} and \eqref{w2} imply
\begin{align}\label{11}
\frac{ex_{k-1,s^*}}{e_{s^*}} \leq  \frac{0.11 ex_{k-1,j}}{0.25e_{j}-10^2 n^{0.6}-6}\leq 0.5 \frac{ ex_{k-1,j}}{e_{j}} .
\end{align}
In the last equality we used $e_j = \Omega(n)$.
Finally from  \eqref{w1}, \eqref{w2} and \eqref{11} 
we get that (ii) holds. Thus either (i) or (ii) holds.
\qed
\vspace{3mm}
\\ By iterelively applying Claim 1 we get that w.h.p.\@
there exists a sequence $0=s_0,s_1,s_2,...,s_8$ such that 
\begin{itemize}
\item[]i) $ex_{k-1,s_i}/e_{s_i} \leq 0.5 ex_{k-1,s_{i-1}}/e_{s_{i-1}}$  or $ex_{k-1,s_i}\leq e_{s_i}/100$ for $i\leq 8$ and
\vspace{-2mm}
\item[]ii) $e_{s_i}\geq e_{s_{i-1}}/10^3$ for $i\leq 8$.
\end{itemize}
Let $t^*= \min\{s_i: ex_{k-1,s_i}\leq e_{s_i}/100\}$. If $t^*>s_8$ then  $ex_{k-1,s_i}/e_{s_i} \leq 0.5 ex_{k-1,s_{i-1}}/e_{s_{hi-1}}$ for $i\leq 8$ and so
$$ \frac{ex_{k-1,s_8}}{e_{s_8}} 
\leq 0.5\frac{ex_{k-1,s_7}}{e_{s_7}}
\leq 0.5^2 \frac{ex_{k-1,s_6}}{e_{s_6}} \leq \cdots 
\leq 0.5^8 \frac{ex_{k-1,s_0}}{e_{s_0}} = 0.5^7 \frac{ex_{k-1,s_0}}{2e_{s_0}} \leq 0.5^7 \leq 0.01. $$
Hence, $t^* \leq s_8$ and  $e_{t^*}\geq e_{s_8} \geq e_{0}/(10^3)^8=e_0/10^{24}$.
\end{proof}
Lemma \ref{567} follows from Lemma \ref{part2} and Corollary \ref{stopF}. Corollary \ref{stopF} states that  w.h.p. $F_{k,\tau_{k-1}}$ occurs and hence the first $\tau_{k-1}-1$ Hyperactions are good.
\begin{lem}\label{part2}
W.h.p. $\tau_{k-1} \leq t^*+6\cdot 10^{-6} e_{t^*}$ and  $e_{\tau_{k-1}}\geq e_0/2^{25}$.
\end{lem}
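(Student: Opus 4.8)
The plan is a negative-drift argument for $ex_{k-1,i}$ started at time $t^*$. We keep the standing conditioning on $F_{k,\tau_{k-1}}$ (which holds w.h.p.\ by Corollary \ref{stopF}) and additionally condition on the conclusion of Lemma \ref{part1}, so that $ex_{k-1,t^*}\le 10^{-2}e_{t^*}$ and $e_{t^*}\ge e_0/10^{24}=\Omega(n)$; all of these events hold w.h.p. Recall that on $F_{k,\tau_{k-1}}$ one has $ex_{k-1,i}>0$ (indeed $\Delta(\G_i)\ge k$) for every $i<\tau_{k-1}$, so that as long as the edge budget $e_i\ge n^{0.9}$ remains in force, $\tau_{k-1}$ is exactly the hitting time of $\{ex_{k-1,\cdot}=0\}$; in particular $\tau_{k-1}\le\tau_3$.

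First I would bound $\tau_{k-1}-t^*$. Set $L:=\lceil 6\cdot 10^{-2}e_{t^*}\rceil$ and suppose $\tau_{k-1}>t^*+L$. Then every $i\in[t^*,t^*+L)$ satisfies $i<\tau_{k-1}$, hence $ex_{k-1,i}>0$, so \eqref{-0.2} of Lemma \ref{-drift} gives $\E{ex_{k-1,i+1}-ex_{k-1,i}\mid\G_i}\le-0.2$ and \eqref{bddex} gives $|ex_{k-1,i+1}-ex_{k-1,i}|\le k-4$. Telescoping,
\[
0\le ex_{k-1,t^*+L}=ex_{k-1,t^*}-0.2L+\sum_{i=t^*}^{t^*+L-1}\bigl[(ex_{k-1,i+1}-ex_{k-1,i})+0.2\bigr].
\]
Since $ex_{k-1,t^*}\le 10^{-2}e_{t^*}$ while $0.2L\ge 1.2\cdot10^{-2}e_{t^*}$, this forces the last sum to be at least $0.2\cdot10^{-2}e_{t^*}=\Omega(n)$. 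But each summand has conditional mean at most $0$ given $\G_i$, is bounded by $k-4+0.2=O(1)$, and (since, given $\G_i$, the next Hyperaction is generated from fresh randomness) has the same conditional mean given the full history as given $\G_i$. Hence Lemma \ref{ah}, applied to the process stopped at $\tau_{k-1}$ (legitimate since $\tau_{k-1}\le\tau_3$ and, by \eqref{ed1}, all indices involved are $\le\tau\le kn/2$), bounds the probability that this sum reaches $\Omega(n)$ by $2\exp\{-\Omega(n^2)/(O(1)\cdot O(n))\}=o(n^{-0.5})$. So w.h.p.\ $\tau_{k-1}\le t^*+L\le t^*+6\cdot10^{-2}e_{t^*}$.

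The edge bound then follows from the bounded-increment estimate \eqref{bdedges}: on $\{\tau_{k-1}\le t^*+L\}$, $e_{\tau_{k-1}}\ge e_{t^*}-6(\tau_{k-1}-t^*)\ge e_{t^*}-6L\ge(1-0.36)e_{t^*}=0.64\,e_{t^*}\ge 0.64\,e_0/10^{24}=\Omega(e_0)$, which in fact gives $e_{\tau_{k-1}}\ge e_0/10^{25}$, the linear lower bound used in Lemma \ref{567}. In particular this is $\gg n^{0.9}$, which a posteriori confirms that the edge-budget clause of $\tau_{k-1}$ was never active (so $\Delta(\G_{\tau_{k-1}})=k-1$, consistent with the assumption made above); the remaining input needed by Lemma \ref{567}, namely that the first $\tau_{k-1}-1$ Hyperactions are good, is part of $F_{k,\tau_{k-1}}$ via Corollary \ref{stopF}.

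The only genuine care required is the bookkeeping around $\tau_{k-1}$: the drift \eqref{-0.2} is valid only while $ex_{k-1,i}>0$, so the martingale must be stopped at $\tau_{k-1}$ before Lemma \ref{ah} is invoked, one must note $\tau_{k-1}\le\tau_3$ so the lemma's range is admissible, and one must check that the conditional expectation of each increment is a function of $\G_i$ alone. Tracking the explicit constants is routine arithmetic (the argument naturally yields $\tau_{k-1}-t^*\le 6e_{t^*}/100$, as in the overview, together with a constant-fraction-of-$e_0$ lower bound on $e_{\tau_{k-1}}$ that suffices downstream).
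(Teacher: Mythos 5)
Your proposal is correct and follows essentially the same route as the paper: a negative-drift argument via \eqref{-0.2} and \eqref{bddex} applied to $ex_{k-1,i}$ on the window starting at $t^*$, closed with Azuma--Hoeffding, followed by the bounded edge-decrement bound \eqref{bdedges} to get $e_{\tau_{k-1}}\geq 0.64\,e_{t^*}\geq e_0/10^{25}$. Your constants ($\tau_{k-1}-t^*\leq 6e_{t^*}/10^2$) match the paper's actual proof and overview; the $10^{-6}$ in the lemma statement is evidently a typo for $10^{-2}$, and your added care with the stopped martingale is a harmless refinement of what the paper does implicitly.
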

Let 
$Z(\G_{t^*})$ be the event that $ex_{k-1,j}>0$ for $j\leq t^*+6e_{t^*}/10^2$.
If $Z(\G_{t^*})$   occurs,  
conditioned on $F_{k,\tau_{k-1}}(G)$, 
the first $t^*+6 e_{t^*}/10^2$ Hyperactions are good.
Thus for $t^*\leq j\leq t^*+6\cdot 10^{-6} e_{t^*}$
the inequality $|e_{j+1}-e_{j}| \leq 6$ holds (see \eqref{bddex}) 
which implies 
\begin{align}\label{final}
e_{t^*+i} \geq e_{t^*}-6i \geq 0.5 e_{t^*} \geq e_0/10^{25} =\Omega(n) \text{ for }i\leq 6e_{t^*}/10^2.
\end{align}
Moreover  \eqref{-0.2} and \eqref{bddex} state
$$\e(ex_{k-1,j+1}-ex_{k-1,j}|\G_j) =-0.2 \hspace{5mm} \text{ and } \hspace{5mm} 
|ex_{k-1,j+1}-ex_{k-1,j}|\leq k-4.$$
In addition 
$$\sum_{j=t^*}^{t^*+6\cdot e_{t^*}/10^2} \e(ex_{k-1,j}-ex_{j}|\G_j)+ex_{k-1,t^*} \leq -0.2\cdot 6 e_{t^*}/10^2 + e_{t^*}/10^2
 <-0.2 e_{t^*}/10^2.$$  
Therefore, since $e_{t^*}=\Omega(n)$, the  Azuma-Hoeffding inequality (see Lemma \ref{ah}) implies  
\begin{align*}
\pr(Z(\G_{t^*}))
&\leq 2\exp
\bigg\{-\frac{(0.2 e_{t^*}/10^2)^2}{2 [6 e_{t^*}/10^2]\cdot  (k-4)^2}  \bigg\}+o(n^{-0.5}) =o(n^{-0.5}). 
\end{align*}
Hence $\tau_{k-1} \leq  t^*+6\cdot 10^{-6} e_{t^*}$. 
\eqref{final} implies that $e_{\tau_{k-1}} \leq e_0/10^{25}$.
\qed
\section{Conclusion}
In this paper we have analyzed a variant of a Karp-Sipser algorithm and we have shown that w.h.p.\@ it finds a perfect matching in random $k=O(1)$-regular graphs.
We have demonstrated that if the initial graph is a random graph with a given degree sequence that   has some ``nice" properties then those properties are it retained  throughout the execution of \textsc{Reduce}, a key element used in the verification of the great efficiency of the algorithm.
It is natural to try to extend this approach to prove the correctness of the algorithm for $G_{n,p}$, as originally intended \cite{KS}.

\appendix

\section{Appendix A: Diagrams of Hyperactions of interest}
{\bf Type 2.}

\begin{center}
\pic{
\node at (0,0.2) {$w$};
\node at (1,0.2) {$u$};
\node at (2,0.2) {$v$};
\node at (3,1.2) {$x$};
\node at (3,0.2) {$y$};
\node at (3,-.8) {$z$};
\node at (-1,1.2) {$a$};
\node at (-1,-0.8) {$b$};
\draw (1,0) -- (2,0);
\draw (2,0) -- (3,1);
\draw (2,0) -- (3,-1);
\draw (2,0) -- (3,0);
\draw (0,0) -- (-1,1);
\draw (0,0) -- (-1,-1);
\draw (0,0) to [out=45,in=135] (1,0);
\draw (0,0) to [out=-45,in=-135] (1,0);
\draw [fill=black] (0,0) circle [radius=.05];
\draw [fill=black] (1,0) circle [radius=.05];
\draw [fill=black] (2,0) circle [radius=.05];
\draw [fill=black] (3,0) circle [radius=.05];
\draw [fill=black] (3,1) circle [radius=.05];
\draw [fill=black] (3,-1) circle [radius=.05];
\draw [fill=black] (-1,1) circle [radius=.05];
\draw [fill=black] (-1,-1) circle [radius=.05];
\draw [->] [ultra thick] (4,0) -- (5,0);
\draw (7,0) ellipse (.6 and .3);
\node at (7.05,0) {$wuv$};
\draw (7.6,0) -- (8.6,1);
\draw (7.6,0) -- (8.6,-1);
\draw (7.6,0) -- (8.6,0);
\draw (5.6,1) -- (6.5,0);
\draw (5.6,-1) -- (6.5,0);
\draw [fill=black] (8.6,1) circle [radius=.05];
\draw [fill=black] (8.6,-1) circle [radius=.05];
\draw [fill=black] (8.6,0) circle [radius=.05];
\draw [fill=black] (5.6,-1) circle [radius=.05];
\draw [fill=black] (5.6,1) circle [radius=.05];
\node at (5.6,1.2) {$a$};
\node at (5.6,-0.8) {$b$};
\node at (8.6,1.2) {$x$};
\node at (8.6,0.2) {$y$};
\node at (8.6,-.8) {$z$};
\node at (8.6,0.2) {$y$};
\node at (8.6,-.8) {$z$};
}
\end{center}

{\bf Type 3.}

\begin{center}
\pic{
\node at (1,0.2) {$u$};
\node at (2,0.2) {$v$};
\node at (3,1.2) {$x$};
\node at (3,0.2) {$y$};
\node at (3,-0.8) {$z$};
\node at (0,1.2) {$a$};
\node at (0,-.8) {$b$};
\node at (-1,2.2) {$c$};
\node at (-1,1.2) {$d$};
\node at (-1,-.8) {$e$};
\node at (-1,-1.8) {$f$};
\draw (1,0) -- (2,0);
\draw (2,0) -- (3,1);
\draw (2,0) -- (3,-1);
\draw (2,0) -- (3,0);
\draw (0,1) -- (1,0);
\draw (0,1) -- (-1,2);
\draw (0,1) -- (-1,1);
\draw (0,-1) -- (-1,-1);
\draw (0,-1) -- (-1,-2);
\draw (0,-1) -- (1,0);
\draw [fill=black] (-1,2) circle [radius=.05];
\draw [fill=black] (-1,1) circle [radius=.05];
\draw [fill=black] (-1,-2) circle [radius=.05];
\draw [fill=black] (-1,-1) circle [radius=.05];
\draw [fill=black] (0,1) circle [radius=.05];
\draw [fill=black] (0,-1) circle [radius=.05];
\draw [fill=black] (1,0) circle [radius=.05];
\draw [fill=black] (2,0) circle [radius=.05];
\draw [fill=black] (3,0) circle [radius=.05];
\draw [fill=black] (3,1) circle [radius=.05];
\draw [fill=black] (3,-1) circle [radius=.05];
\draw [->] [ultra thick] (4,0) -- (5,0);
\draw (7.5,0) ellipse (.6 and .3);
\node at (7.55,0) {$avb$};
\draw (9.1,0) -- (10.1,1);
\draw (9.1,0) -- (10.1,-1);
\draw (9.1,0) -- (10.1,0);
\draw [fill=black] (9.1,0) circle [radius=.05];
\draw [fill=black] (10.1,1) circle [radius=.05];
\draw [fill=black] (10.1,-1) circle [radius=.05];
\draw [fill=black] (10.1,0) circle [radius=.05];
\node at (9.1,0.2) {$u$};
\node at (10.1,1.2) {$x$};
\node at (10.1,0.2) {$y$};
\node at (10.1,-.8) {$z$};
\node at (6,2.2) {$c$};
\node at (6,1.2) {$d$};
\node at (6,-.8) {$e$};
\node at (6,-1.8) {$f$};
\draw [fill=black] (6,2) circle [radius=.05];
\draw [fill=black] (6,1) circle [radius=.05];
\draw [fill=black] (6,-1) circle [radius=.05];
\draw [fill=black] (6,-2) circle [radius=.05];
\draw (6,2) -- (7,0);
\draw (6,1) -- (7,0);
\draw (6,-2) -- (7,0);
\draw (6,-1) -- (7,0);
}
\end{center}
We allow the edge $\{a,b\}$ to be a single edge in this construction. This gives us a Type 3b Hyperaction.
{\bf Type 4.}

\begin{center}
\pic{
\node at (0,0.2) {$v$};
\draw [fill=black] (0,0) circle [radius=.05];
\node at (-1,1.2) {$a$};
\node at (-1,0.2) {$b$};
\node at (-1,-0.8) {$c$};
\draw [fill=black] (-1,1) circle [radius=.05];
\draw [fill=black] (-1,0) circle [radius=.05];
\draw [fill=black] (-1,-1) circle [radius=.05];
\draw (-1,1) -- (0,0);
\draw (-1,0) -- (0,0);
\draw (-1,-1) -- (0,0);
\draw [fill=black] (1,0) circle [radius=.05];
\node at (1,0.2) {$u$};
\draw (0,0) -- (1,0);
\node at (2,1.2) {$x_1$};
\node at (2,-1.2) {$x_2$};
\draw [fill=black] (2,1) circle [radius=.05];
\draw [fill=black] (2,-1) circle [radius=.05];
\draw (1,0) -- (2,1);
\draw (1,0) -- (2,-1);
\draw (2,-1) -- (2,1);
\node at (3,1.2) {$w_1$};
\node at (3,-1.2) {$w_2$};
\draw [fill=black] (3,1) circle [radius=.05];
\draw [fill=black] (3,-1) circle [radius=.05];
\draw (2,1) -- (3,1);
\draw (2,-1) -- (3,-1);
\node at (4,2.2) {$p$};
\node at (4,1.2) {$q$};
\node at (4,-0.8) {$r$};
\node at (4,-1.8) {$s$};
\draw [fill=black] (4,2) circle [radius=.05];
\draw [fill=black] (4,1) circle [radius=.05];
\draw [fill=black] (4,-1) circle [radius=.05];
\draw [fill=black] (4,-2) circle [radius=.05];
\draw (3,1) -- (4,2);
\draw (3,1) -- (4,1);
\draw (3,-1) -- (4,-2);
\draw (3,-1) -- (4,-1);
\draw [->] [ultra thick] (5,0) -- (6,0);
\node at (8,0.2) {$v$};
\draw [fill=black] (0,0) circle [radius=.05];
\node at (7,1.2) {$a$};
\node at (7,0.2) {$b$};
\node at (7,-0.8) {$c$};
\draw [fill=black] (7,1) circle [radius=.05];
\draw [fill=black] (7,0) circle [radius=.05];
\draw [fill=black] (7,-1) circle [radius=.05];
\draw [fill=black] (8,0) circle [radius=.05];
\draw (8,0) -- (7,1);
\draw (8,0) -- (7,0);
\draw (8,0) -- (7,-1);
\draw (10,0) ellipse (1.4 and .3);
\node at (10,0) {$u,x_1,x_2,w_1,w_2$};
\node at (13,2.2) {$p$};
\node at (13,1.2) {$q$};
\node at (13,-0.8) {$r$};
\node at (13,-1.8) {$s$};
\draw [fill=black] (13,2) circle [radius=.05];
\draw [fill=black] (13,1) circle [radius=.05];
\draw [fill=black] (13,-1) circle [radius=.05];
\draw [fill=black] (13,-2) circle [radius=.05];
\draw (11.3,0) -- (13,2);
\draw (11.3,0) -- (13,1);
\draw (11.3,0) -- (13,-1);
\draw (11.3,0) -- (13,-2);
}
\end{center}

\section{Appendix B: Proof of Lemma \ref{remA}}

\begin{proof}
\noindent $\G_i \in \cC_{3,k-1}$ implies that $n_{j,i}\geq \ca^{j-3}n_{3,i} -o(n^{0.85})$ for $3 \leq j \leq k-1$.
Therefore 
\begin{align}\label{131}
p_{3,i} =\frac{3n_{3,i}}{2e_i} \leq \frac{3n_{3,i}}{\sum_{j=3}^{k-1}jn_{j,i}+n^{0.85})}+o(1) 
\leq  \frac{3}{\sum_{j=3}^{k-1} \ca^{3-j}j}+o(1).
\end{align}
Finally for $k \geq 8$ \eqref{131} implies
\begin{align*}
p_{3,i} =\frac{3n_{3,i}}{2e_i}  
\leq  \frac{3}{\sum_{j=3}^{k-1} \ca^{3-j}j}+o(1)
\leq  \frac{3}{\sum_{j=3}^{7} \ca^{3-j}j}+o(1) \leq 0.081.
\end{align*}
\end{proof}


\begin{thebibliography}{99}
\bibitem{AF} M. Anastos and A.M Frieze 
{\em Finding perfect matchings in random cubic graphs in linear time}


\bibitem{AFP} J. Aronson, A.M. Frieze and B. Pittel, {\em Maximum matchings in sparse random graphs: Karp-Sipser revisited}, Random Structures and Algorithms 12 (1998) 111-178.

\bibitem{AZ} K. Azuma {\em Weighted Sums of Certain Dependent Random Variables}
Tôhoku Mathematical Journal 19 (1967)  357–367


\bibitem{BG} P. Balister and S. Gerke, Controllability and matchings in random bipartite graphs, {\em Surveys in Combinatorics} 424 (2015) 119-145.

\bibitem{BF} T. Bohman and A.M. Frieze, \emph{Karp-Sipser on random graphs with a fixed degree sequence}, Combinatorics, Probability and Computing 20 (2011) 721-742.

\bibitem{Bol} B. Bollob\'as, \emph{A probabilistic proof of an asymptotic formula for the number of labelled regular graphs}, European Journal on Combinatorics 1 (1980) 311-316.

\bibitem{CFP} P. Chebolu, A.M. Frieze and P. Melsted, {\em Finding a Maximum Matching in a Sparse Random Graph in $O(n)$ Expected Time}, Journal of the ACM 57 (2010) 1-27.

\bibitem{BL} C. Bordenave and M. Lelarge, {\em The rank of diluted random graphs}, Annals of Probability 39 (2011) 1097-1121.

\bibitem{FK} A.M. Frieze and M. Karo\'nski, {\em Introduction to Random Graphs}, Cambridge University, Press 2015.

\bibitem{F75} David A. Freedman, \emph{On Tail Probabilities for Martingales}, The Annals of Probability 3 (1975), 100-118. 

\bibitem{FP} A.M. Frieze and B. Pittel, {\em Perfect matchings in random graphs with prescribed minimal degree}, Trends in Mathematics, Birkhauser Verlag, Basel (2004) 95-132.

\bibitem{FRS} A.M. Frieze, J. Radcliffe and S. Suen, \emph{Analysis of a simple greedy matching algorithm on random cubic graphs}, Combinatorics, Probability and Computing 4 (1995) 47-66.

\bibitem{H}  W. Hoeffding, Probability inequalities for sums of
bounded random variables, {\em Journal of the American Statistical Association} 58 (1963) 13-30.

\bibitem{KS} R.M. Karp and M. Sipser,
\emph{Maximum matchings in sparse random graphs}, Proceedings of the  22nd Annual IEEE Symposium on Foundations of Computing  (1981) 364-375.

\bibitem{MV} S. Micali and V. Vazirani,
\emph{ An $O(V^{1/2}E)$ algorithm for finding maximum matching in general graphs},
21st Annual Symposium on Foundations of Computer Science, 
IEEE Computer Society Press, New York. (1980) 17–27.

\end{thebibliography}
\end{document}